\def\thesection{\arabic{section}}
\def\theequation{\thesection.\arabic{equation}}
\def\R{\mathbb{R}}
\newcommand{\e}{\epsilon}
\newcommand{\al} {\alpha}
\newcommand{\Om} {\Omega}
\newcommand{\De} {\Delta}
\newcommand{\la} {\lambda}
\newcommand{\noi} {\noindent}
\newcommand{\oline} {\overline}
\newcommand{\mb} {\mathbb}
\newcommand{\mc} {\mathcal}
\markboth{\small } {\small n-Kirchhoff Choquard equation}
\def\theequation{\@arabic{\c@section}.\@arabic{\c@equation}}
\def\N{{I\!\!N}}
\newcommand{\QED}{\rule{2mm}{2mm}}
\newtheorem{Theorem}{Theorem}[section]
\newtheorem{Lemma}[Theorem]{Lemma}
\newtheorem{Proposition}[Theorem]{Proposition}
\newtheorem{Remark}[Theorem]{Remark}
\newtheorem{Definition}[Theorem]{Definition}
\newtheorem{Example}{Example}
\begin{document}

{\vspace{0.01in}}

\title
{ \sc n-Kirchhoff Choquard equations with exponential nonlinearity}

\author{~~R. Arora,~~J. Giacomoni\footnote{LMAP (UMR E2S-UPPA CNRS 5142) Bat. IPRA, Avenue de l'Universit\'e F-64013 Pau, France. email: rakesh.arora@univ-pau.fr, jacques.giacomoni@univ-pau.fr},~~ T. Mukherjee\footnote{Tata Institute of Fundamental Research(TIFR) Centre of Applicable Mathematics, Banglore, India. e-mail: tulimukh@gmail.com}~ and ~K. Sreenadh\footnote{Department of Mathematics, Indian Institute of Technology Delhi, Hauz Khaz, New Delhi-110016, India. e-mail: sreenadh@gmail.com} }

\date{}

\maketitle

\begin{abstract}
\noi This article deals with the study of the following Kirchhoff equation with exponential nonlinearity of Choquard type (see $(KC)$ below). We use the variational method in the light of Moser-Trudinger inequality to show the existence of weak solutions to $(KC)$. Moreover, analyzing the fibering maps and minimizing the energy functional over suitable subsets of the Nehari manifold, we prove existence and multiplicity of weak solutions to convex-concave problem $(\mathcal{P}_{\la,M})$  below.

\noi \textbf{Key words:} Doubly non local equation, Kirchhoff equation, Choquard nonlinearity with critical growth, Moser-Trudinger inequality, Nehari Manifold.

\noi \textit{2010 Mathematics Subject Classification:} 35R11, 35R09, 35A15.

\end{abstract}

\section{Introduction} This article is concerned with the study of the following Kirchhoff equation with exponential nonlinearity of Choquard type
\begin{equation*}
     (KC)
     \left\{
         \begin{alignedat}{2}
             {} -m(\int_\Om|\nabla u|^n~dx)\De_n u
             & {}=  \left(\int_{\Om} \frac{F(y,u)}{|x-y|^\mu}dy\right)f(x,u),\; u>0
             && \quad\mbox{ in }\, \Omega ,
             \\
             u & {}= 0
             && \quad\mbox{ on }\, \partial\Omega,
          \end{alignedat}
     \right.
\end{equation*}
where $-\Delta_n u =-div (|\nabla u|^{n-2}\nabla u)$, $\mu\in (0,n)$, $\Om$ is a smooth bounded domain in $\mb R^n$, $n \geq 2$, $m :\mb R^+ \to \mb R^+$ and $f: \Omega \times \mb R \to \mb R$ are continuous functions satisfying suitable assumptions specified in detail, below. The function $F$ denotes the primitive of $f$ with respect to the second variable.
We also study the existence and multiplicity of solution of the following  Kirchhoff equation with convex-concave nonlinearity
 \begin{equation*}
     (\mathcal{P}_{\la, M})
     \left\{
         \begin{alignedat}{2}
             {} -m\left(\int_\Om|\nabla u|^n~dx\right)\De_n u
             & {}= (|x|^{-\mu}\ast F(u))f(u) + \lambda h(x)|u|^{q-1} u
             && \quad\mbox{ in }\, \Omega,
             \\
             u & {}= 0
             && \quad\mbox{ on }\, \partial\Omega,
             \\
             \quad u & > 0 &&\quad\mbox{ in }\, \Omega
          \end{alignedat}
     \right.
\end{equation*}
where $\mu\in (0,n)$, $\Om$ is a smooth bounded domain in $\mb R^n$, $f(u)=u|u|^p exp(|u|^{\beta})$, $0 < q<n-1 < 2n-1 < p+1= \beta_0 + (n-1)$, $\beta \in \bigg(1, \frac{n}{n-1}\bigg)$ and $F(t)=\int_{0}^t f(s)~ds$. We assume $m(t)=at+b$ where $a, b>0$ and $h \in L^{r}(\Omega)$, with $r =\frac{n}{n-q-1}$, satisfying $h^+ \not\equiv 0$.\\
The main feature of these kind of problems is its doubly-nonlocal structure due to the presence of non-local kirchhoff and Choquard term which makes the equation $(KC)$ and $(\mathcal{P}_{\la, M})$ no longer a pointwise identity. The doubly non-local nature induces some further mathematical difficulties in the use of classical methods of nonlinear analysis.\\

\noi The study of elliptic equations with nonlinearity having critical exponential growth is related to the following Trudinger-Moser inequatlity proved in \cite{Trud-Moser}:
\begin{Theorem}\label{TM-ineq}
For $n\geq 2$, $u \in W^{1,n}_0(\Om)$
\[\sup_{\|u\|\leq 1}\int_\Om \exp(\alpha|u|^{\frac{n}{n-1}})~dx < \infty\]
if and only if $\alpha \leq \alpha_n$, where $\alpha_n = n\omega_{n-1}^{\frac{1}{n-1}}$ and $\omega_{n-1}=$ $(n-1)-$ dimensional surface area of $\mb S^{n-1}$.
\end{Theorem}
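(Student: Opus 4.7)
The proof splits into two directions. For the \emph{upper bound} ($\alpha \le \alpha_n$) I would follow Moser's scheme: reduce to radial decreasing functions on a ball via Schwarz symmetrization, convert to a one–dimensional variational inequality on the half line by a logarithmic change of variables, and then establish the crucial one–dimensional estimate. For \emph{sharpness} ($\alpha > \alpha_n$) I would exhibit the Moser concentrating sequence and check that the supremum becomes infinite.

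\textbf{Step 1: Symmetrization.} Given $u\in W^{1,n}_0(\Omega)$ with $\|\nabla u\|_n\le 1$, replace $u$ by its Schwarz symmetric decreasing rearrangement $u^{*}$ defined on the ball $\Omega^{*}=B_R$ with $|B_R|=|\Omega|$. The P\'olya--Szeg\H{o} inequality gives $\|\nabla u^{*}\|_n\le\|\nabla u\|_n$, while equimeasurability yields $\int_{\Omega}e^{\alpha|u|^{n/(n-1)}}dx=\int_{B_R}e^{\alpha|u^{*}|^{n/(n-1)}}dx$. A dilation reduces matters to $B_1$ with $u=u(r)$ nonnegative, radial and nonincreasing. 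The merit of this step is that we can now apply the 1D formalism.

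\textbf{Step 2: Reduction to a 1D inequality.} Introduce the substitution $r=e^{-t/n}$, $t\in[0,\infty)$, and set $w(t)= \alpha_n^{1/n}\, n^{(n-1)/n}\, u(e^{-t/n})/n$ (adjusting constants so the norm becomes one). A direct computation transforms the constraint into
\[
w(0)=0,\qquad \int_{0}^{\infty}|w'(t)|^{n}\,dt\le 1,
\]
and the integrand under study into
\[
\int_{B_1} e^{\alpha|u|^{n/(n-1)}}dx \;=\; C_n\int_{0}^{\infty}\exp\!\left(\tfrac{\alpha}{\alpha_n}\,|w(t)|^{n/(n-1)}-t\right)dt.
\]
Thus the sharp Moser bound boils down to showing that, for $\alpha\le\alpha_n$, the last integral is bounded by a constant depending only on $n$.

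\textbf{Step 3: Moser's one-dimensional lemma.} H\"older's inequality gives the uniform pointwise bound $|w(t)|\le t^{(n-1)/n}$, so the exponent is nonpositive. This alone only gives a pointwise bound of $1$, which is not integrable, so one must exploit the saturation of H\"older. Set $A(t)=\int_{0}^{t}|w'|^{n}ds\le 1$; the refined estimate $|w(t)|^{n/(n-1)}\le A(t)\,t$ shows that $|w(t)|^{n/(n-1)}-t\le(A(t)-1)t$. The standard argument fixes $\tau=\sup\{t:|w(t)|^{n/(n-1)}\ge t/2\}$, controls $[0,\tau]$ by its measure (which is bounded by a geometric argument using the fact that $A(\tau)$ cannot be too small), and controls $[\tau,\infty)$ by the exponential decay $e^{(A(t)-1)t}$; together these give the desired uniform bound. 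This is the main technical obstacle and the only step that actually uses the precise value $\alpha_n=n\omega_{n-1}^{1/(n-1)}$.

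\textbf{Step 4: Sharpness.} For $\alpha>\alpha_n$, test the supremum on the Moser sequence
\[
M_k(x)=\omega_{n-1}^{-1/n}\begin{cases}(\log k)^{(n-1)/n}, & |x|\le 1/k,\\[2pt] (\log k)^{-1/n}\log(1/|x|), & 1/k\le |x|\le 1,\\[2pt] 0, & |x|>1,\end{cases}
\]
on $\Omega=B_1$ (extended by zero if $\Omega$ is a general bounded domain containing $B_1$, after scaling). One checks $\|\nabla M_k\|_n=1$ and a direct estimate on $B_{1/k}$ gives $\int_\Omega e^{\alpha M_k^{n/(n-1)}}dx\ge c\, k^{-n}\cdot k^{\alpha/\alpha_n}\to\infty$ as $k\to\infty$ since $\alpha/\alpha_n>1$. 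This proves the sharpness of $\alpha_n$ and completes the theorem.
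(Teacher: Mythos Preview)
The paper does not supply its own proof of this statement: Theorem~\ref{TM-ineq} is quoted as the classical Trudinger--Moser inequality with a reference to \cite{Trud-Moser}, and is used throughout as a black box (see e.g.\ Lemmas~\ref{lemma3.1}, \ref{PS-level}, \ref{wk-sol}). So there is nothing in the paper to compare your argument against; your outline is precisely Moser's original scheme (symmetrization, logarithmic change of variables, one-dimensional lemma, and the concentrating sequence for sharpness), which is the standard and correct route. Note in passing that the Moser functions you write down in Step~4 are exactly the test functions $w_k$ the paper employs later in Lemma~\ref{PS-level}.

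One small slip in Step~3: from H\"older you get $|w(t)|\le A(t)^{1/n}t^{(n-1)/n}$, hence $|w(t)|^{n/(n-1)}\le A(t)^{1/(n-1)}t$, not $A(t)\,t$. Since $A(t)\le 1$ and $n\ge 2$, the exponent $1/(n-1)$ makes the bound \emph{weaker}, so your stated inequality is false for $n\ge 3$. This does not break the argument---Moser's actual lemma proceeds via a more delicate decomposition of $[0,\infty)$ and does not rely on that particular line---but you should correct the exponent if you intend to write out the details.
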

The embedding $W^{1,n}_0(\Om)\ni u \mapsto \exp(|u|^\beta)\in L^1(\Om)$ is compact for all $\beta \in \left[1, \frac{n}{n-1}\right)$ and is continuous for $\beta = \frac{n}{n-1}$. Consequently the map $T: W^{1,n}_0(\Om) \to L^q(\Om)$, for $q \in [1,\infty)$, defined by $T(u):= \exp\left( |u|^{\frac{n}{n-1}}\right)$ is continuous with respect to the norm topology.

\noi The study of Kirchhoff problems was initiated in 1883, when Kirchhoff \cite{kir} studied the following equation
\[\rho \frac{\partial^2u}{\partial t^2} -\left( \frac{P_0}{h} + \frac{E}{2L} \int_0^L\left|\frac{\partial u}{\partial x}\right|^2~dx \right)\frac{\partial^2u}{\partial x^2}=0,\]
where $\rho, P_0, h, E, L$ represents physical quantities. This model extends the classical D'Alembert wave equation by considering the effects of the changes in the length of the strings during the vibrations. More general versions of these problems are termed as the Kirchhoff equations and has been extensively studied by researchers till date. Such equations also appear in biological systems where the function $u$ describes a phenomenon which depends on the average of itself (such as population density), refer \cite{alv1,alv2} and references therein. We cite \cite{cheng,fig1,fig2,LLS,wang,LLG} as references where the Kirchhoff equations have been treated by variational methods, with no attempt to provide the complete list.\\

\noi On a similar note, recently, researchers are paying a lot of attention on nonlocal problems involving the nonlinearity of convolution type. They are termed as Hartree type or the Choquard type nonlinearity. Consider the problem
\[(C):\;\;\;  -\Delta u + V(x)u = (|x|^{-\mu}\ast F(x,u))f(x,u)\; \text{in}\; \mb R^n\]
where $\mu \in (0,n)$, $F$ is the primitive of $f$ with respect to second variable, $V$, $f$ are continuous functions satisfying certain assumptions. The starting point of studying such problems was the work of S. Pekar (see \cite{pekar}) in $1954$ where he used such equation to describe the quantum theory of a polaron at rest and later, P. Choquard (see \cite{choq}) in $1976$ used it to model an electron trapped in its own hole while a certain approximation to Hartree-Fock theory of component plasma is performed. The problem $(C)$ also appears when we look for standing waves of the nonlinear nonlocal Schr\"{o}dinger equation which is known to influence the propagation of electromagnetic waves in plasma \cite{berge}. Moreover, such problems play a key role in the Bose-Einstein condensation, refer \cite{BE}.
For interested readers, we refer the survey paper on Choquard equations by Moroz and Schaftingen \cite{MS}. In $2015$, L\"{u} \cite{lu} studied the following Choquard equation involving Kirchhoff operator
\[-\left( a+ b\int_{\mb R^3}|\nabla u|^2~dx \right)\Delta u + (1+ \mu g(x))u = \left(|x|^{-\alpha}\ast |u|^p \right)|u|^{p-2}u \; \text{in}\; \mb R^3\]
where $a > 0,\ b \geq 0$ are constants, $\al \in (0, 3), p \in (2, 6 - \alpha),\mu > 0$ is a parameter and $g$ is a nonnegative continuous potential satisfying some conditions. By using the Nehari manifold and the concentration compactness principle, he establishes the existence of ground state solutions when $\mu$ is large enough and studies the concentration behavior of these solutions as $ \mu \to +\infty$. Recently, Li, Gao and Zu \cite{LGZ} studied the existence and the concentration of sign-changing solutions to a class of Kirchhoff-type systems with Hartree-type nonlinearity in $\mb R^3$ using minimization argument on the sign-changing Nehari manifold and a quantitative deformation lemma. Pucci et al.\ \cite{pucci} also studied existence of nonnegative solutions of a Schr\"odinger-Choquard-Kirchhoff type fractional $p$-equation via variational methods.\\

\noi An important question now arises is the case of critical dimension $n=2$. But there is not much literature concerning problem $(C)$ when $n=2$ except the articles by Alves et al. \cite{yang-JDE,yang-JCA}. In \cite{yang-JDE}, authors studied a singularly perturbed nonlocal Schr\"odinger equation using variational methods. We point out that there is no work on Kirchhoff equations involving Choquard equations when $n=2$ till date. So our work is new in this regard where we have considered the problem with a more general quasilinear elliptic operator, the $n$-laplace operator, in the dimension $n\geq 2$. As pointed out in the beginning, the critical growth of the nonlinearity in this case is of exponential type, motivated by the Trudinger-Moser inequality. The problem of the type $(KC)$ for $n=2$ without the convolution term that is
\[-m(\int_\Om |\nabla u|^2)\Delta u ~dx = f(x,u)\; \text{in}\Om,\;\; u = 0 \;\text{on}\; \partial\Om\]
was studied by Figueiredo and Severo \cite{FS}. This result was later  extended for the $n$-Laplace operator by Goyal et al. in \cite{SPS}. It is then a natural question to investigate the existence results for a Kirchhoff equation involving Choquard nonlinearity with exponential growth.\\

 \noi Precisely, in the first part of the present work, we prove Adimurthi \cite{adi} type existence result for the n-Kirchhoff Choquard problem $(KC)$ with nonlinearity $f(x,u)$ that has exponential critical growth and superlinear behavior at $0$. The nonlinear nature of the second order operator $-\Delta_n$ requires to show the pointwise convergence of gradients for the Palais-Smale sequences. For that, we analyze the occurrence of concentration phenomena for any Palais Smale sequence associated to $(KC)$. This concentration compactness analysis is further used to establish the Palais Smale condition for Palais Smale sequences whose energy levels are strictly below some determined critical level. Due to the doubly nonlocal feature given by the interaction between the Kirchhoff and Choquard term, the task appeal new non trivial estimates with the help of the semigroup property of the Riesz potential and the Lion's compactness Lemma (see Lemma~\ref{Lions-lem}). Since the energy functional posseses the Moutain pass geometry, we are then able to prove the existence of a Palais Smale sequence with subcritical energy level and consequently the existence of at least one solution to $(KC)$. For that we need crucially that the nonlinearity satisfies a growth condition given by \eqref{h-growth} (see Lemma~\ref{PS-level}).\\

\noi Next question that arises is the multiplicity of such Kirchhoff-Choquard equations with exponential nonlinearities. So in the second part of the present work, we study the existence and multiplicity results for problems with an extra $n$-sublinear  sign changing term by using the Nehari manifold techniques.  Precisely, we study $(\mathcal{P}_{\la,M})$ to obtain in the subcritical case ($\beta<\frac{n}{n-1}$) the multiplicity of the solutions with respect to the parameter $\lambda$ by extracting the Palais Smale sequence in the natural decomposition of the Nehari Manifold.  This requires very accurate estimates on the energy functional restricted to the components of the Nehari manifold. In the critical case ($\beta=\frac{n}{n-1}$), we use again the concentration compactness together with an accurate analysis of the level of the energy functional on the Nehari manifold to determine potential concentration phenomena for associated Palais Smale sequences. Based on this analysis, we show for $\lambda$ small enough the existence of a relatively compact Palais Smale sequence that yields at least one  solution to $(\mathcal{P}_{\la,M})$.\\

\noi During  last few decades, several authors such as in \cite{alves-hamidi,bro-wu,dra-poho,hamidi,wu1,wu2,wu3} used the Nehari manifold and associated fiber maps approach to study the multiplicity results with polynomial type nonlinearity and sign changing weight functions whereas the n-Laplace problems with exponential type nonlinearity has been addressed in \cite{SPS,goyal1,goyal2}. In case of Kirchhoff equations with Choquard nonlinearity, we highlight that no result is avalaible in the current literature. In this regard, the results proved in the present paper are completely new.

\section{Main results}
First, we consider the problem $(KC)$.
The function $m : \mb R^+ \to \mb R^+$ is a continuous function satisfying the following conditions:
\begin{enumerate}
\item[(m1)] There exists $m_0>0$ such that $m(t)\geq m_0$ for all $t\geq 0$ and $M(t)=\displaystyle\int_0^t m(s)ds$ satisfies
\[M(t+s)\geq M(t)+M(s), \; \text{for all}\; t,s\geq 0.\]
\item[(m2)] There exists constants $b_1,b_2>0$ and $\hat t>0$ such that for some $r\in \mb R$
\[m(t)\leq b_1+b_2t^r,\;\text{for all}\; t \geq \hat t.\]
\item[(m3)] The function $\frac{m(t)}{t}$ is non-increasing for $t>0$.
\end{enumerate}
\begin{Example}
An example of a function satisfying (m1), (m2) and (m3) is $m(t)= m_0+ bt^\beta$ where $m_0,\beta<1$ and $b\geq0$. Also $m(t)= 1 +\log(1+t)$ for $t\geq 1$ verifies (m1)-(m3).
\end{Example}
Using (m3), one can easily deduce that the function
\[(m3)^\prime \quad \quad \quad \frac{1}{n}M(t)-\frac{1}{\theta}m(t)t \;\text{is non-negative and non-decreasing for}\; t\geq 0 \;\text{and}\; \theta \geq 2n.\]
The function $f:\Om \times \mb R\to \mb R$ is given by $f(x,t)=h(x,t)\exp(|t|^{\frac{n}{n-1}})$. In the frame of problem $(KC)$, $h \in C(\bar \Om \times \mb R)$ satisfies the following conditions
\begin{itemize}
\item[(h1)] $h(x,0)=0$ for all $t \leq 0$ and $h(x,t)>0$ for $t>0$.
 \item[(h2)]  For any $\e>0$, $\lim\limits_{t \to \infty}\sup_{x \in \bar \Om}h(x,t)\exp(-\e|t|^{\frac{n}{n-1}})=0$ and $\lim\limits_{t \to \infty}\inf_{x \in \bar \Om}h(x,t)\exp(\e|t|^{\frac{n}{n-1}})=\infty$.
    \item[(h3)] There exists $\ell >n-1$ such that $\frac{h(x,t)}{t^{\ell}}$ is increasing for each $t>0$ uniformly in $x \in \Om.$
        \item[(h4)] There exist $T, T_0>0$ and $\gamma_0 >0 $ such that $0<t^{\gamma_0}F(x,t)\leq T_0 f(x,t)$ for all $|t|\geq T$ and uniformly in $x \in \Omega$.
\end{itemize}
The condition (h3) implies that $\frac{f(x,t)}{t^{n-1}}$ is increasing for each $t>0$ uniformly in $x \in \Om$. \\
\begin{Example}
An example of functions satisfying $(h1)-(h4)$ is $f(x,t)= t^{\beta_0+(n-1)}\exp(t^p)\exp(|t|^{\frac{n}{n-1}})$ for $t \geq 0$ and $f(x,t)=0$ for $t<0$  where $0\leq p< \frac{n}{n-1}$ and $\beta_0>0$.
\end{Example}
\begin{Definition}
We call a function $u \in W^{1,n}_0(\Om)$ to be a solution of $(KC)$ if
\[m(\|u\|^n) \int_\Om |\nabla u|^{n-2}\nabla u. \nabla \varphi ~dx = \int_\Om \left(\int_{\Om}\frac{F(y,u)}{|x-y|^{\mu}}dy\right)f(x,u)\varphi ~dx,\; \text{for all}\; \varphi \in W^{1,n}_0(\Om).\]
\end{Definition}
The energy functional $E : W^{1,n}_0(\Om) \to \mb R$ associated to $(KC)$ is given by
\[E(u)= \frac{1}{n}M(\|u\|^n) - \frac12 \int_\Om \left(\int_{\Omega} \frac{F(y,u)}{|x-y|^\mu}dy\right)F(x,u)~dx.\]
Under the assumptions on $f$, we get that for any $\e>0$, $p \geq 1$ and $0\leq \beta_0<\ell$, there exists $C(\e,n,\mu)>0$ such that for each $x \in \Om$
\begin{equation}\label{kc-1}
|F(x,t)|\leq \e |t|^{{\beta_0+1}}+ C(\e,n,\mu)|t|^p\exp((1+\e)|t|^{\frac{n}{n-1}}), \; \text{for all}\; t \in\mb R.
\end{equation}
For any $u \in W^{1,n}_0(\Om)$, by virtue of Sobolev embedding we get that $u \in L^q(\Om)$ for all $q \in [1,\infty)$. 
This also implies that
\begin{equation}\label{KC-new1}
F(x,u) \in L^{q}(\Om)\mbox{ for any }q\geq 1.
\end{equation}
Now we recall the well known Hardy-Littlewood-Sobolev inequality.
 \begin{Proposition}\label{HLS}
(\textbf {Hardy-Littlewood-Sobolev inequality}) {[pp. 106, Theorem 4.3, \cite{lieb}]} Let $t,r>1$ and $0<\mu<n $ with $1/t+\mu/n+1/r=2$, $f \in L^t(\mathbb R^n)$ and $h \in L^r(\mathbb R^n)$. There exists a sharp constant $C(t,n,\mu,r)$, independent of $f,h$ such that
 \begin{equation}\label{HLSineq}
 \int_{\mb R^n}\int_{\mb R^n} \frac{f(x)h(y)}{|x-y|^{\mu}}\mathrm{d}x\mathrm{d}y \leq C(t,n,\mu,r)\|f\|_{L^t(\mb R^n)}\|h\|_{L^r(\mb R^n)}.
 \end{equation}
{ If $t =r = \textstyle\frac{2n}{2n-\mu}$ then
 \[C(t,n,\mu,r)= C(n,\mu)= \pi^{\frac{\mu}{2}} \frac{\Gamma\left(\frac{n}{2}-\frac{\mu}{2}\right)}{\Gamma\left(n-\frac{\mu}{2}\right)} \left\{ \frac{\Gamma\left(\frac{n}{2}\right)}{\Gamma(n)} \right\}^{-1+\frac{\mu}{n}}.  \]
 In this case there is equality in \eqref{HLSineq} if and only if $f\equiv (constant)h$ and
 \[h(x)= A(\gamma^2+ |x-a|^2)^{\frac{-(2n-\mu)}{2}}\]
 for some $A \in \mathbb C$, $0 \neq \gamma \in \mathbb R$ and $a \in \mathbb R^n$.}
 \end{Proposition}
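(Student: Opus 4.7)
The plan is to prove the inequality in two stages: first the general bound with some non-sharp constant $C(t,n,\mu,r)$, and then, in the diagonal case $t=r=\frac{2n}{2n-\mu}$, the explicit sharp constant together with the characterization of extremizers.

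For the general inequality, the key observation is that $|x|^{-\mu}$ lies in the weak Lebesgue space $L^{n/\mu,\infty}(\mb R^n)$, since $|\{x:|x|^{-\mu}>s\}|$ is exactly $c_n s^{-n/\mu}$. Applying Young's convolution inequality for weak Lebesgue spaces (O'Neil/Hunt) yields
\[
\|f\ast|\cdot|^{-\mu}\|_{L^{r'}(\mb R^n)} \leq C_1\,\|f\|_{L^t(\mb R^n)}\,\bigl\||x|^{-\mu}\bigr\|_{L^{n/\mu,\infty}},
\]
where the exponents satisfy $\tfrac{1}{t}+\tfrac{\mu}{n}=1+\tfrac{1}{r'}$, which under the assumption $\tfrac{1}{t}+\tfrac{\mu}{n}+\tfrac{1}{r}=2$ is exactly $\tfrac{1}{r}+\tfrac{1}{r'}=1$. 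Pairing with $h\in L^r(\mb R^n)$ and applying H\"older's inequality then gives \eqref{HLSineq} with a constant depending on $t,n,\mu,r$. An alternative route is via the layer-cake representation combined with Marcinkiewicz interpolation between endpoint weak-type estimates.

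For the sharp constant in the diagonal case, I would follow Lieb's rearrangement-and-symmetry strategy. By the Riesz rearrangement inequality, the left-hand side of \eqref{HLSineq} does not decrease when $f$ and $h$ are replaced by their symmetric-decreasing rearrangements $f^\ast, h^\ast$, while the right-hand side is unchanged; hence extremizers may be assumed radial and nonincreasing. The diagonal problem is invariant under the action of the conformal group of $\mb R^n$, which via stereographic projection onto $\mb S^n$ becomes the rotation group, and the quadratic form transforms into a rotation-invariant bilinear form on the sphere. A competing-symmetries iteration, alternating spherical symmetrization with conformal inversion, drives any maximizing sequence towards constants on $\mb S^n$. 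Pulling back by the inverse stereographic projection produces exactly the profiles $A(\gamma^{2}+|x-a|^{2})^{-(2n-\mu)/2}$, and substituting such a profile into the ratio yields the Gamma-function expression for $C(n,\mu)$ after a Beta-integral computation.

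The main obstacle is the sharp-constant step, specifically establishing existence of maximizers despite the non-compact conformal symmetry. One fixes a normalization $\|f\|_{L^t}=1$ and runs a concentration-compactness argument, exploiting the conformal invariance to recenter the maximizing sequence and rule out both escape to infinity and concentration at a single point. Once existence is secured, the Euler--Lagrange equation together with the symmetry analysis pins down the explicit form of extremizers. For the applications in the present paper, however, only the general (non-sharp) statement is used, and that portion requires nothing beyond the weak-type interpolation step outlined above.
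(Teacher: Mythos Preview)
Your sketch is correct, but note that the paper does not actually prove Proposition~\ref{HLS}: it is simply recalled as a known result, with an explicit citation to Theorem~4.3 in Lieb--Loss, \emph{Analysis}. So there is nothing in the paper to compare against beyond that reference.

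That said, your outline is faithful to the standard proof. The non-sharp bound via the weak-$L^{n/\mu}$ membership of $|x|^{-\mu}$ together with the weak Young (O'Neil) inequality is exactly the classical route, and your description of the sharp diagonal case---Riesz rearrangement to reduce to radial decreasing functions, conformal invariance and stereographic lift to $\mb S^n$, competing symmetries to drive maximizing sequences to constants, then pulling back to obtain the $(\gamma^2+|x-a|^2)^{-(2n-\mu)/2}$ profile---is precisely Lieb's argument in the cited reference. Your closing remark is also apt: the present paper only ever uses the inequality with some finite constant $C(n,\mu)$, never the sharp value or the extremizer characterization, so for the paper's purposes the elementary interpolation step suffices.
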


Taking $t=r=\frac{2n}{2n-\mu}$ in Proposition \ref{HLS} and using \eqref{KC-new1}, we get that $E$ is well defined. Also $E \in C^1(W^{1,n}_0(\Om), \mb R)$. Naturally, the critical points of $E$ corresponds to weak solutions of $(KC)$ and for any $u\in W^{1,n}_0(\Om)$ we have
\[\langle E^\prime(u),\varphi\rangle = m(\|u\|^n) \int_\Om |\nabla u|^{n-2}\nabla u\nabla \varphi~dx - \int_\Om \left(\int_\Om \frac{F(y,u)}{|x-y|^{\mu}}dy\right)f(x,u)\varphi~dx \]
for all $\varphi \in W^{1,n}_0(\Om)$. The following theorem is the main result concerning $(KC)$ proved in this article.

\begin{Theorem}\label{kc-mt-1}
Assume (m1)-(m3) and (h1)-(h4) holds. Assume in addition
\begin{equation}\label{h-growth}
\displaystyle \lim_{s\to +\infty} \frac{sf(x,s)F(x,s)}{\exp\left(2 |s|^{\frac{n}{n-1}}\right)} = \infty,\mbox{ uniformly in }x \in \overline{\Om}.
\end{equation}
 Then the problem $(KC)$ admits a weak solution.
\end{Theorem}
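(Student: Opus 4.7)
My strategy is a mountain pass approach adapted to the doubly nonlocal setting. First I would verify that the energy functional $E$ has the mountain pass geometry on $W^{1,n}_0(\Om)$. The assumption (m1) gives $\frac{1}{n}M(\|u\|^n)\geq \frac{m_0}{n}\|u\|^n$, while the estimate \eqref{kc-1} with $\beta_0+1>n$ (from (h3)) together with the Hardy--Littlewood--Sobolev inequality (Proposition~\ref{HLS}, taking $t=r=\frac{2n}{2n-\mu}$) and the Moser--Trudinger inequality (Theorem~\ref{TM-ineq}) yields $\int_\Om\bigl(|x|^{-\mu}\ast F(\cdot,u)\bigr)F(x,u)\,dx \leq C\|u\|^{2(\beta_0+1)} + o(\|u\|^{2(\beta_0+1)})$ for $\|u\|$ small enough with $\alpha_n(1+\e)\|u\|^{n/(n-1)}<\alpha_n$. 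This provides the local minimum $E(u)\geq\alpha>0$ at $\|u\|=\rho$ small. For the second geometric condition, property (m2) bounds $M(t)$ from above by a polynomial, whereas (h3)--(h4) force $F$ to grow at least like $|t|^{\ell+2}$ at infinity with $\ell+2>2n$, so $E(te_0)\to-\infty$ as $t\to\infty$ for any fixed nontrivial $e_0\geq 0$.

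The mountain pass theorem then produces a Palais--Smale sequence $(u_k)\subset W^{1,n}_0(\Om)$ at the minimax level $c>0$. The crucial analytic step, stated as Lemma~\ref{PS-level} in the paper, is to show that
\[
c<c^*:=\frac{m_0}{2n}\left(\frac{\alpha_n}{2}\cdot\frac{2n-\mu}{2n}\right)^{n-1},
\]
the precise threshold below which the Trudinger--Moser exponential nonlinearity remains subcritical. To obtain this strict inequality, I would test the minimax definition against a path $t\mapsto tM_k$, where $M_k$ are suitably normalized Moser functions concentrating at an interior point. Hypothesis \eqref{h-growth} is exactly what is needed to beat the logarithmic loss of the Moser sequence after it is squared through both the Choquard convolution and the Kirchhoff factor $M(\|u\|^n)$; the doubling appears because the energy contains a quadratic interaction in $F$ after applying HLS.

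Next I would establish boundedness of $(u_k)$ in $W^{1,n}_0(\Om)$. Property $(m3)^\prime$ combined with (h3) gives a subordination inequality of the form $\frac{1}{n}M(\|u_k\|^n)-\frac{1}{2\theta}\langle E'(u_k),u_k\rangle\cdot (\cdot) \geq C\|u_k\|^n + \text{positive terms}$ for $\theta\geq 2n$, which forces $\|u_k\|$ bounded. Passing to a subsequence, $u_k\rightharpoonup u$ weakly, $u_k\to u$ strongly in $L^q(\Om)$ for every $q<\infty$, and a.e.\ pointwise. The core difficulty is then the strong convergence of the Choquard term together with the pointwise convergence of $\nabla u_k$ required by the quasilinear $n$-Laplacian.

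For the Choquard nonlinearity I would use the semigroup property of the Riesz potential: writing $|x|^{-\mu}=|x|^{-\mu/2}\ast|x|^{-\mu/2}$ up to a constant, HLS and the compact embedding of $W^{1,n}_0$ into $L^q$ reduce the question to an equi-integrability statement for $f(x,u_k)$, which follows from the Trudinger--Moser control $\|\exp(\alpha|u_k|^{n/(n-1)})\|_{L^s}\leq C$ whenever $\alpha s(1+\e)\|u_k\|^{n/(n-1)}<\alpha_n$, valid for $k$ large since $\|u_k\|^n\leq \frac{2n c}{m_0}+o(1)<$ the critical threshold imposed by Lemma~\ref{PS-level}. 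A concentration alternative is ruled out by Lions' lemma (Lemma~\ref{Lions-lem}). Once the nonlinear right-hand side converges strongly in $(W^{1,n}_0)^*$, the standard monotonicity trick for the $n$-Laplacian (testing $E'(u_k)-E'(u)$ against $u_k-u$ and exploiting the strict monotonicity of $|\xi|^{n-2}\xi$) combined with $m(\|u_k\|^n)\to m(\|u\|^n)>0$ delivers $\nabla u_k\to\nabla u$ a.e., and hence $\|u_k\|\to\|u\|$. Passing to the limit in the weak formulation shows $u$ is a weak solution; the strict positivity $u>0$ follows from (h1), testing with $u^-$, and the strong maximum principle. I expect the sharpest obstacle to be the quantitative minimax estimate that makes Lemma~\ref{PS-level} work, because the Kirchhoff factor, the Choquard convolution, and the Trudinger--Moser sharp constant all interact, and this is precisely where condition \eqref{h-growth} must be used.
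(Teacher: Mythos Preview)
Your outline captures the mountain pass architecture, but the compactness part contains a real gap. The inequality you invoke,
\[
\|u_k\|^n\leq \frac{2nc}{m_0}+o(1),
\]
does not follow from $E(u_k)\to c$ and $E'(u_k)\to 0$. Combining these with the Ambrosetti--Rabinowitz inequality $\alpha F\le tf$ coming from (h3) yields only $\|u_k\|^n\le \frac{2n\alpha}{(\alpha-n)m_0}\,c+o(1)$ for the \emph{fixed} $\alpha=\ell+1>n$; you cannot let $\alpha\to\infty$. Consequently the a priori norm bound you need to make $\exp\bigl(\tfrac{2n}{2n-\mu}(1+\e)|u_k|^{n/(n-1)}\bigr)$ uniformly integrable is unavailable, and the whole chain ``equi-integrability $\Rightarrow$ strong convergence of the right-hand side $\Rightarrow$ monotonicity trick $\Rightarrow\|u_k\|\to\|u\|$'' breaks at its first link. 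Relatedly, your appeal to Lions' lemma presupposes that the weak limit $u$ is nonzero, which you have not yet established; and your assertion $m(\|u_k\|^n)\to m(\|u\|^n)$ is exactly what remains to be proved.

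Your threshold $c^*=\frac{m_0}{2n}\bigl(\frac{\alpha_n}{2}\cdot\frac{2n-\mu}{2n}\bigr)^{n-1}$ is also incorrect: the paper's level is $\frac{1}{n}M\bigl((\frac{2n-\mu}{2n}\alpha_n)^{n-1}\bigr)$, with the primitive $M$ and no inner factor $\tfrac12$. A smaller threshold such as yours cannot be reached by the Moser-function test, because the contradiction in Lemma~\ref{PS-level} hinges on $t_k^n\ge(\frac{2n-\mu}{2n}\alpha_n)^{n-1}$, which follows from $M(t_k^n)>M\bigl((\frac{2n-\mu}{2n}\alpha_n)^{n-1}\bigr)$ and monotonicity of $M$; with your threshold one only gets $M(t_k^n)>\frac{m_0}{2}(\cdot)$, which is too weak. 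The paper instead (i) proves $\int_\Om(|x|^{-\mu}\!\ast F(\cdot,u_k))F(x,u_k)\to\int_\Om(|x|^{-\mu}\!\ast F(\cdot,u_0))F(x,u_0)$ via (h4) and the semigroup property, (ii) rules out $u_0\equiv 0$ by the resulting identity $M(\|u_k\|^n)\to n l^*<M\bigl((\frac{2n-\mu}{2n}\alpha_n)^{n-1}\bigr)$, and (iii) for $u_0\not\equiv 0$ uses a local concentration analysis (finitely many bad points) to get $|\nabla u_k|^{n-2}\nabla u_k\rightharpoonup|\nabla u_0|^{n-2}\nabla u_0$, then compares with the Nehari level $l^{**}\ge l^*$ and applies Lions' lemma to exclude $E(u_0)<l^*$. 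This dichotomy and the Nehari comparison are the missing ingredients in your plan.
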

\begin{Example}
An example of function $f$ satisfying (h1)-(h4) and \eqref{h-growth} is $f(x,t)=g(x)t^{p}\exp({t^{\frac{n}{n-1}}})$ for $t\geq 0$ and with $0\not\equiv g\in L^\infty(\Om)$ and non negative and $p>n-1$.
\end{Example}
 We also study the existence of positive solutions to  the  perturbed quasilinear Kirchhoff equation $(\mathcal{P}_{\la, M})$.
Using the Nehari manifold technique, we show existence and multiplicity of solutions with respect to the parameter $\la.$ Precisely, we show the following main results in the subcritical and critical case:
\begin{Theorem}\label{first}
Let $\beta \in \left( 1, \frac{n}{n-1}\right)$. Then there exists $\la_0$ such that  $(\mathcal{P}_{\la , M})$ admits at least two solutions for $\la \in (0,\la_0).$
\end{Theorem}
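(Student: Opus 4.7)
The strategy is the Nehari manifold / fibering map method, adapted to the doubly nonlocal structure. Define
$$J_\la(u) = \frac{1}{n}M(\|u\|^n) - \frac{1}{2}\int_\Om \bigl(|x|^{-\mu}\ast F(u)\bigr)F(u)\,dx - \frac{\la}{q+1}\int_\Om h(x)|u|^{q+1}\,dx,$$
and, for $u \in W^{1,n}_0(\Om)\setminus\{0\}$ and $t>0$, the fibering map $\phi_u(t)=J_\la(tu)$. The Nehari manifold $\mathcal{N}_\la=\{u\neq 0:\phi_u'(1)=0\}$ is decomposed into $\mathcal{N}_\la^+,\mathcal{N}_\la^0,\mathcal{N}_\la^-$ according to the sign of $\phi_u''(1)$. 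The exponent hierarchy $0<q<n-1$ (so $q+1<n$) together with $p+1>2n-1$ and the specific form $m(t)=at+b$ forces $\phi_u$ to be negative near the origin (dominated by $-\la t^{q+1}\int h|u|^{q+1}$ when $\int h|u|^{q+1}>0$), to rise through a positive bump driven by the Kirchhoff term $\sim a\|u\|^{2n}t^{2n}$, and to plunge to $-\infty$ because of the exponential Choquard term. I would therefore show that for $\la$ below an explicit threshold $\la_0>0$, $\phi_u$ has exactly two positive critical points $t_+(u)<t_-(u)$, one local minimum and one local maximum, and that $\mathcal{N}_\la^0=\emptyset$; this last assertion uses $(m3)'$ and the monotonicity in $(h3)$ to keep $\phi_u''(1)$ away from zero on $\mathcal{N}_\la$.

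Next I would minimize $J_\la$ separately on each branch. Evaluating at a suitable test function, $\inf_{\mathcal{N}_\la^+}J_\la<0$, while on $\mathcal{N}_\la^-$ the superlinear growth yields a positive lower bound. For each infimum I take a minimizing sequence $(u_k)$; the Nehari identity combined with $(m1)$ and $(m3)'$ gives boundedness in $W^{1,n}_0(\Om)$, hence $u_k\rightharpoonup u$ along a subsequence. This is the point at which $\beta<\frac{n}{n-1}$ is essential: by the remark after Theorem \ref{TM-ineq}, the map $W^{1,n}_0(\Om)\ni v\mapsto \exp(|v|^\beta)\in L^q(\Om)$ is compact, so $F(u_k)\to F(u)$ strongly in the exponents required to apply the Hardy--Littlewood--Sobolev inequality (Proposition \ref{HLS}) with $t=r=\tfrac{2n}{2n-\mu}$. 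This lets me pass to the limit in the Choquard pairing $\int (|x|^{-\mu}\ast F(u_k))F(u_k)\,dx$ and in its linearized version, while $\int h|u_k|^{q+1}\to \int h|u|^{q+1}$ by compact Sobolev embedding. Since $\mathcal{N}_\la^0=\emptyset$, a standard implicit function / Lagrange multiplier argument shows each minimizer solves $J_\la'(u)=0$. Replacing $u$ by $|u|$, which stays on the same branch with the same energy, and invoking a maximum principle, one obtains two distinct positive solutions lying in the disjoint sets $\mathcal{N}_\la^+$ and $\mathcal{N}_\la^-$.

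The main obstacle I anticipate is in the fibering analysis of the first step: because both the Kirchhoff prefactor $m(t^n\|u\|^n)$ and the Choquard convolution depend nonlinearly on $tu$, the expression for $\phi_u''(1)$ contains mixed terms such as $m'(\|u\|^n)\|u\|^{2n}$ and $\int(|x|^{-\mu}\ast(f(u)u))f(u)u\,dx$ that must be controlled simultaneously. Establishing $\mathcal{N}_\la^0=\emptyset$ uniformly for $\la\in(0,\la_0)$ and cleanly separating $t_+(u)$ from $t_-(u)$ requires a careful interplay between $(m3)'$, the HLS inequality, and $(h3)$. Once these fibering estimates are in place, the subcritical regime $\beta<\frac{n}{n-1}$ renders the compactness step essentially routine, and the two disjoint components of the Nehari manifold deliver the two distinct positive solutions asserted by the theorem.
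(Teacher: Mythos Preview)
Your proposal is correct and follows essentially the same route as the paper: Nehari manifold decomposition via fibering maps, proving $\mathcal{N}_\la^0$ is trivial for $\la<\la_0$ (the paper's key technical step is Lemma~\ref{inf}), then minimizing $J_\la$ on $\mathcal{N}_\la^+$ and $\mathcal{N}_\la^-$ separately, with compactness in the subcritical regime $\beta<\frac{n}{n-1}$ coming from the Trudinger--Moser embedding. The paper additionally uses Ekeland's variational principle together with an implicit-function construction (Lemmas~\ref{compl1}--\ref{compli2} and Proposition~\ref{j}) to upgrade each minimizing sequence to a genuine Palais--Smale sequence before passing to the limit, which is the step that secures strong $W_0^{1,n}$ convergence; you gesture at this with your ``implicit function / Lagrange multiplier'' remark, and in the subcritical case it is indeed routine.
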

In the critical case, we show the following existence result.
\begin{Theorem}\label{second}
Let $\beta = \frac{n}{n-1}$, then there exists $\la_1 >0$ such that for $\la \in (0, \la_1),$ $\mathcal{J}_{\la, M}$ admits a solution.
\end{Theorem}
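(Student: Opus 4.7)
The idea is to minimize $\mathcal{J}_{\la,M}$ over a suitable component of the Nehari manifold
\[ \mc N_{\la,M} := \Bigl\{u \in W^{1,n}_0(\Om)\setminus\{0\} \;:\; \bigl\langle \mathcal{J}'_{\la,M}(u),u \bigr\rangle = 0\Bigr\}, \]
and to combine the fibering-map analysis already used in Theorem~\ref{first} with a critical-level estimate that restores compactness in the borderline regime $\beta = \frac{n}{n-1}$. First I would analyse the fibering map $\phi_u(t) := \mathcal{J}_{\la,M}(tu)$ and, using $0<q<n-1$ together with property $(m3)'$, split $\mc N_{\la,M}$ according to the sign of $\phi''_u(1)$ into $\mc N^{+}_{\la,M}$, $\mc N^{-}_{\la,M}$, $\mc N^{0}_{\la,M}$. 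A direct computation combining $(m3)$ and the critical exponential estimate \eqref{kc-1} produces $\la^*>0$ such that $\mc N^{0}_{\la,M}=\emptyset$ for $0<\la<\la^*$; hence $\mc N_{\la,M}$ is a $C^1$ manifold and constrained critical points are free critical points.

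Next I would extract a minimising Palais--Smale sequence on $\mc N^{+}_{\la,M}$. By $(m1)$ and the Nehari identity, $\mathcal{J}_{\la,M}$ is coercive and bounded below on $\mc N^{+}_{\la,M}$, and testing with a fixed $\phi$ satisfying $\int_\Om h|\phi|^{q+1}>0$ projected onto $\mc N^{+}_{\la,M}$ yields
\[ c_\la := \inf_{\mc N^{+}_{\la,M}} \mathcal{J}_{\la,M} \leq -C\,\la^{\frac{n}{n-1-q}} < 0. \]
Ekeland's principle applied on $\mc N^{+}_{\la,M}$ then produces $(u_k) \subset \mc N^{+}_{\la,M}$ with $\mathcal{J}_{\la,M}(u_k)\to c_\la$ and $\mathcal{J}'_{\la,M}(u_k)\to 0$ in $(W^{1,n}_0(\Om))^{\prime}$. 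Coercivity gives a weakly convergent subsequence $u_k \rightharpoonup u_\la$, and the structure of $\mc N^{+}_{\la,M}$ combined with $c_\la<0$ forces $u_\la\not\equiv 0$.

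The hard part is upgrading weak convergence to strong convergence, since at $\beta=\frac{n}{n-1}$ the Moser--Trudinger embedding fails to be compact. Strong convergence holds only below a critical threshold of the form $c^* = \frac{1}{n} M(\rho^*)$, where $\rho^*$ depends on $\al_n$, $\mu$ and the sharp constant of Proposition~\ref{HLS}. Choosing $\la_1 \in (0,\la^*)$ so small that the upper bound on $c_\la$ above lies strictly below $c^*$, one can apply Lions' concentration-compactness lemma (Lemma~\ref{Lions-lem}) to the normalised sequence $(u_k/\|u_k\|)$ to exclude concentration of $|\nabla u_k|^n$. This produces uniform higher integrability of $\exp\bigl((1+\e)|u_k|^{n/(n-1)}\bigr)$ for some $\e>0$, which, when combined with the semigroup property of the Riesz potential, the HLS inequality and the Vitali convergence theorem, yields convergence of the Choquard term $(|x|^{-\mu} * F(u_k))f(u_k)$ in $L^1(\Om)$. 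The standard monotonicity inequality for the $n$-Laplacian then gives a.e.\ convergence of $\nabla u_k$ and hence strong convergence $u_k \to u_\la$ in $W^{1,n}_0(\Om)$.

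Finally, $u_\la \in \mc N^{+}_{\la,M}$ attains $c_\la$ and, via the Lagrange multiplier rule on $\mc N_{\la,M}$ (using $\mc N^{0}_{\la,M}=\emptyset$), is a weak solution of $(\mathcal{P}_{\la,M})$. Positivity is obtained by testing with $u_\la^{-}$ and invoking the structure $f(u)=u|u|^p\exp(|u|^\beta)$ and $h^{+}\not\equiv 0$; the strong maximum principle for the $n$-Laplacian gives $u_\la>0$ in $\Om$. The principal technical obstacle is the sharp level estimate $c_\la<c^*$, since the sublinear perturbation $\la h|u|^{q-1}u$ must be balanced precisely against the nonlocal Choquard contribution controlled by Hardy--Littlewood--Sobolev in order to break the critical Trudinger--Moser threshold for $\la$ small enough.
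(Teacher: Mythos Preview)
Your overall architecture---minimize $\mathcal{J}_{\la,M}$ on the Nehari manifold, extract a Palais--Smale sequence via Ekeland, and recover compactness by showing the infimum lies below a critical energy threshold---is exactly the paper's. The paper likewise uses $\theta:=\inf_{N_{\la,M}}\mathcal{J}_{\la,M}<0$ (Lemma~\ref{lemmaq}) together with Proposition~\ref{j} to produce a bounded $(PS)_\theta$ sequence, and then invokes a compactness result valid below a threshold.

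The substantive difference is in the compactness step. The paper does \emph{not} use Lions' higher integrability lemma here; instead it proves a separate defect--measure lemma (Lemma~\ref{compactness}): if a $(PS)_c$ sequence fails to be relatively compact, the Radon limits $\nu_1$ of $|\nabla u_k|^n$ and $\nu_2$ of $(|x|^{-\mu}\ast F(u_k))f(u_k)u_k$ concentrate on a finite set $A$, and testing $\langle\mathcal{J}'_{\la,M}(u_k),u_k\psi_\delta\rangle$ against cut--offs forces $\nu_2(A)\ge m_0\bigl(\tfrac{2n-\mu}{2n}\alpha_n\bigr)^{n-1}$. Feeding this into $n\mathcal{J}_{\la,M}(u_k)-\tfrac12\langle\mathcal{J}'_{\la,M}(u_k),u_k\rangle$ and optimising the concave contribution $\la\int h|u|^{q+1}$ against the Choquard term gives the $\la$--\emph{dependent} threshold
\[
c\;\ge\;\frac{m_0}{2n}\Bigl(\tfrac{2n-\mu}{2n}\alpha_n\Bigr)^{n-1}-\tilde C\,\la^{\frac{2(p+2)}{2p+3-q}}.
\]
Since $\theta<0$, taking $\la$ small makes the right--hand side positive and places $\theta$ strictly below it, yielding compactness.

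Your sketch via Lemma~\ref{Lions-lem} has a gap at precisely this point. Lions' lemma does not give compactness below a fixed level $c^*=\tfrac1n M(\rho^*)$ automatically: one must convert the energy bound into the quantitative estimate $\tfrac{2n}{2n-\mu}\|u_k\|^{n/(n-1)}<\alpha_n\,(1-\|v_0\|^n)^{-1/(n-1)}$. In the paper's Section~3 this is achieved through the chain $M(\tau^n)=n c-n E(u_0)+M(\|u_0\|^n)$ together with $E(u_0)\ge 0$, the latter relying on the Nehari identity for $u_0$. Here the weak limit $u_\la$ only satisfies the equation with coefficient $m(\tau^n)$, not $m(\|u_\la\|^n)$, so $u_\la\notin N_{\la,M}$ a priori and the lower bound $\mathcal{J}_{\la,M}(u_\la)\ge c_\la$ is not available without further argument; moreover the concave term $-\tfrac{\la}{q+1}H(u_\la)$ spoils the nonnegativity used in Claim~(1) of Theorem~\ref{kc-mt-1}. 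Your remark ``choose $\la_1$ small so that $c_\la<c^*$'' is also off: since $c_\la<0<c^*$ this is automatic, and the genuine reason $\la$ must be small is the $\la$--dependent correction $-\tilde C\la^{2(p+2)/(2p+3-q)}$ in the threshold, which your fixed $c^*$ does not capture. The defect--measure route of Lemma~\ref{compactness} handles both issues simultaneously and is what the paper actually uses.
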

\section{Existence of a positive weak solution to $(KC)$}
In this section, we study problem $(KC)$ and for that we use the mountain pass Theorem and analyze accurately the compactness of Palais Smale sequences for $E$. First we show that the energy functional $E$ possesses the mountain pass geometry.
\begin{Lemma}\label{lemma3.1}
Assume the assumptions (m1), (m2) and (h1)-(h4). $E$ has the Mountain pass geometry around $0$.
\end{Lemma}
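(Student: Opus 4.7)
The Mountain Pass geometry for $E$ consists of two claims: (MP1) there exist $\rho,\alpha>0$ with $E(u)\geq\alpha$ whenever $\|u\|=\rho$, and (MP2) there exists $e\in W^{1,n}_0(\Omega)$ with $\|e\|>\rho$ and $E(e)<0$. I would handle these separately, both leveraging the bound \eqref{kc-1}, the Hardy--Littlewood--Sobolev inequality (Proposition~\ref{HLS}) with $t=r=\frac{2n}{2n-\mu}$, and the Trudinger--Moser inequality (Theorem~\ref{TM-ineq}).

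For (MP1), the lower bound $M(\|u\|^n)\geq m_0\|u\|^n$ coming from (m1) gives $\frac{1}{n}M(\|u\|^n)\geq\frac{m_0}{n}\|u\|^n$. For the Choquard term, HLS yields
\[
\int_\Omega\!\!\int_\Omega \frac{F(y,u)F(x,u)}{|x-y|^\mu}\,dx\,dy \leq C(n,\mu)\,\|F(\cdot,u)\|_{L^{2n/(2n-\mu)}(\Omega)}^2.
\]
Apply \eqref{kc-1} with $\beta_0\in(n-1,\ell)$ (possible since $\ell>n-1$) and some $p>n/2$; the polynomial piece is controlled by Sobolev, while the exponential piece is controlled by splitting via H\"older's inequality: choose $q>1$ so that $\frac{q(1+\e)2n}{2n-\mu}\rho^{n/(n-1)}<\alpha_n$, and for $\|u\|\leq\rho$ small enough Trudinger--Moser renders the exponential factor uniformly bounded in $L^1$. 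Combining, I get
\[
E(u)\geq \frac{m_0}{n}\|u\|^n - C_1\|u\|^{2(\beta_0+1)} - C_2\|u\|^{2p},
\]
with both upper-bound exponents strictly larger than $n$. Hence one can choose $\rho$ small so that the right-hand side is bounded below by a positive constant $\alpha$ on the sphere $\|u\|=\rho$.

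For (MP2), fix $\phi\in C_c^\infty(\Omega)$ with $\phi\geq 0$, $\phi\not\equiv 0$, and examine $E(t\phi)$ as $t\to+\infty$. From (m2), $M$ grows at most polynomially in its argument, so $\frac{1}{n}M(t^n\|\phi\|^n)\lesssim t^n + t^{n(r+1)}$. On the other hand, (h1)--(h3) combined with the presence of the $\exp(|t|^{n/(n-1)})$ factor in $f$ forces a lower bound $F(x,s)\geq c\exp(c'|s|^{n/(n-1)})$ for $s$ large; pairing this with a subdomain on which $\phi\geq\delta>0$ yields a double-integral lower bound of order $\exp(c'' t^{n/(n-1)})$. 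This exponential lower bound on the Choquard contribution dominates any polynomial growth from the Kirchhoff term, so $E(t\phi)\to-\infty$; setting $e=t_0\phi$ for $t_0$ large enough completes the proof.

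\textbf{Main obstacle.} The delicate point is (MP1): the doubly nonlocal nature forces one to pay the price of squaring after HLS, so every exponent produced by \eqref{kc-1} must beat $n$ after doubling; this is where the freedom $\beta_0\in(n-1,\ell)$ (from $\ell>n-1$) is essential. A secondary technical nuisance is the Trudinger--Moser regime: one must insert a H\"older splitting with a suitable $q>1$ \emph{before} invoking Theorem~\ref{TM-ineq}, and this requires $\rho$ to be fixed small enough so that $\frac{q(1+\e)2n}{2n-\mu}\rho^{n/(n-1)}<\alpha_n$. Once these calibrations are made, both geometric conditions fall out directly.
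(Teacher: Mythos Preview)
Your proposal is correct and, for (MP1), essentially identical to the paper's argument: the paper also applies Proposition~\ref{HLS} with $t=r=\frac{2n}{2n-\mu}$, invokes \eqref{kc-1}, splits via H\"older (with exponent $2$ rather than a generic $q>1$), and uses Theorem~\ref{TM-ineq} under the constraint $\frac{4n(1+\e)}{2n-\mu}\|u\|^{\frac{n}{n-1}}\leq \alpha_n$ to arrive at the same lower bound $E(u)\geq \frac{m_0}{n}\|u\|^n-C(\e\|u\|^{2(\beta_0+1)}+C(\e)\|u\|^{2p})$.

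For (MP2) there is a small genuine difference. The paper extracts from (h3) and the exponential structure of $f$ only a \emph{polynomial} lower bound $F(x,s)\geq C_1 s^{K_1}-C_2$ with $K_1\geq\max\{\frac{n}{2},\frac{n(r+1)}{2}\}$, then expands the double integral and compares powers of $t$ against the polynomial upper bound on $M(t^n)$ coming from (m2). You instead push directly to an exponential lower bound $F(x,s)\geq c\exp(c'|s|^{n/(n-1)})$ and let that dominate any polynomial growth of $M$. Your route is shorter and avoids the bookkeeping of matching $2K_1$ against $n(r+1)$; the paper's route is slightly more elementary in that it never needs to integrate the exponential, only to observe that $f$ dominates any monomial. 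Both are valid and lead to $E(t\phi)\to-\infty$.
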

\begin{proof}
Let $u\in W^{1,n}_0(\Om)$ such that $\|u\|$ small enough. Let $0<\beta_0<\ell$. Then from Proposition \ref{HLS}, (h3) and \eqref{kc-1}, for any $\e>0$ we know that there exists a $C(\e)>0$ such that
\begin{equation}\label{kc-MP1}
\begin{split}
&\int_{\Om}\left(\int_\Om \frac{F(y,u)}{|x-y|^{\mu}}dy\right)F(x,u)~dx  \leq C(n,\mu)\|F(x,u)\|_{L^\frac{2n}{2n-\mu}(\Om)}^2\\
 &\leq C(n,\mu)2^{\frac{2n}{2n-\mu}}\left(\e \int_\Om |u|^{\frac{2n(\beta_0+1)}{2n-\mu}} +C(\e)\int_\Om |u|^{\frac{2pn}{2n-\mu}}\exp\left(\frac{2n(1+\e)}{2n-\mu}|u|^{\frac{n}{n-1}} \right) \right)^{\frac{2n-\mu}{n}}\\
 &\leq C_1 \left(\e \int_\Om |u|^{\frac{2n(\beta_0+1)}{2n-\mu}} + C_2(\e) \|u\|^{\frac{2pn}{2n-\mu}} \left( \int_\Om\exp\left(\frac{4n(1+\e)\|u\|^{\frac{n}{n-1}}}{2n-\mu}\left(\frac{|u|}{\|u\|}\right)^{\frac{n}{n-1}}\right)\right)^{\frac12} \right)^{\frac{2n-\mu}{n}}
 \end{split}
\end{equation}
where we used Sobolev and H\"{o}lder inequality.  So if we choose $\e>0$ small enough  and $u$ such that $\displaystyle\frac{4n(1+\e)\|u\|^{\frac{n}{n-1}}}{2n-\mu} \leq \alpha_n$ then using Theorem \ref{TM-ineq} in \eqref{kc-MP1} we get
\begin{align*}
\int_{\Om}\left(\int_\Om \frac{F(y,u)}{|x-y|^{\mu}}dy\right)F(x,u)~dx &\leq C_3 \left(\e \|u\|^{\frac{2n(\beta_0+1)}{2n-\mu}}  + C(\e) \|u\|^{\frac{2pn}{2n-\mu}}
 \right)^{\frac{2n-\mu}{n}}\\
 & \leq C_4 \left(\e \|u\|^{2(\beta_0+1)}  + C(\e) \|u\|^{2p}
 \right).
 \end{align*}
Hence from (m1) and above estimate, we deduce that for $\|u\|=\rho$ where $\rho<\left(\frac{\alpha_n(2n-\mu)}{4pn(1+\e)}\right)^{\frac{n-1}{n}}$
\begin{align*}
E(u) &\geq m_0\frac{\|u\|^n}{n}-   C_4 \left(\e \|u\|^{2(\beta_0+1)}  + C(\e) \|u\|^{2p}
 \right).
\end{align*}
Taking $\beta_0 >0$ such that $2(\beta_0+1) >n$ and $2p>n$, we can choose $\rho$ small enough so that $E(u) \geq \sigma$ for some $\sigma>0$ (depending on $\rho$) when $\|u\|=\rho$. Furthermore, under the assumption (m2), for some $a_1,\;a_2>0$ and $t_0>0$ we have $m(t) \leq a_1+a_2t^r$  and
\begin{equation*}
M(t)\leq \left\{
\begin{split}
&a_0+a_1t + \frac{a_2t^{r+1}}{r+1},\;r\neq -1\\
&a_0+a_1t + a_2\ln t,\;r= -1
\end{split}
\right.
\end{equation*}
when $t\geq \hat t$ and where
\begin{equation*}
a_0=\left\{
\begin{split}
&M(t_0)-a_1t_0-a_2\frac{t_0^{r+1}}{r+1},\;r\neq -1\\
&M(t_0) - a_1t_0 - a_2\ln t_0,\;r= -1.
\end{split}
\right.
\end{equation*}
Let $u_0 \in W^{1,n}_0(\Om)$ such that $u_0\geq  0$ and $\|u_0\|=1$. Then (h3) implies that there exists $K_1 \geq \max \{\frac{n}{2},\frac{n(r+1)}{2}\}$ such that $F(x,s) \geq C_1s^{K_1}-C_2$ for all $(x,s) \in \Om \times [0,\infty)$ and for some positive constants $C_1$ and $C_2$. Using this, we obtain
\begin{align*}
\int_\Om \left(\int_\Om \frac{F(y,tu_0)}{|x-y|^{\mu}}dy\right)F(x,tu_0)~dx &\geq \int_\Om \int_\Om \frac{(C_1(tu_0)^{K_1}(y)-C_2)(C_1(tu_0)^{K_1}(x)-C_2)}{|x-y|^{\mu}}~dxdy\\
  & = C_1^2 t^{2K_1} \int_\Om \int_\Om \frac{u_0^{K_1}(y)u_0^{K_1}(x)}{|x-y|^\mu}~dxdy \\
  & \quad -2C_1C_2t^{K_1}\int_\Om \int_\Om\frac{u_0^{K_1}(y)}{|x-y|^\mu}~dxdy + C_2^2 \int_\Om \int_\Om |x-y|^{-\mu}~dxdy.
\end{align*}
Therefore from above we obtain
\begin{align*}
E(tu_0) &\leq \frac{M(\|t u_0\|^n)}{n}- \int_{\Om}\left( \int_\Om\frac{F(y, tu_0)}{|x-y|^{\mu}} dy\right)F(x, tu_0)~dx\\ &\leq C_3+ C_4t^n+ C_5t^{n(r+1)} - C_4t^{2K_1}+C_6t^{K_1}
\end{align*}
where $C_i's$ are positive constants for $i=4,5,6$. This implies that $E(tu_0) \to -\infty$ as $t \to \infty$. Thus there exists a $u_0\in W^{1,n}_0(\Om)$ with $\|u_0\|> \sigma$ such that $E(u_0)<0$.\hfill{\QED}
\end{proof}

\begin{Lemma}\label{kc-PS-bdd}
Every Palais Smale sequence is bounded in $W^{1,n}_0(\Om)$.
\end{Lemma}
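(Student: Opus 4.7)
The plan is to test $E(u_k)-\frac{1}{\theta}\langle E'(u_k),u_k\rangle$ for a sufficiently large $\theta>2n$: since $E(u_k)\to c$ and $\|E'(u_k)\|_{*}=o(1)$, this quantity is at most $c+1+\e_k\|u_k\|$ with $\e_k\to 0$, so it suffices to bound it from below by $c_0\|u_k\|^n-C$ for some $c_0>0$.

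For the Kirchhoff part, write
\[\frac{1}{n}M(t)-\frac{1}{\theta}m(t)t = \Bigl(\frac{1}{n}M(t)-\frac{1}{2n}m(t)t\Bigr)+\Bigl(\frac{1}{2n}-\frac{1}{\theta}\Bigr)m(t)t.\]
The first bracket is non-negative by $(m3)'$ applied at $\theta=2n$, and $(m1)$ bounds the second below by $(\frac{1}{2n}-\frac{1}{\theta})m_0 t$. Evaluated at $t=\|u_k\|^n$ this delivers the desired $c_0\|u_k\|^n$ contribution as soon as $\theta>2n$.

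For the doubly nonlocal Choquard part, the key observation is that $(h4)$ upgrades to a global Ambrosetti--Rabinowitz-type inequality. Since $\gamma_0>0$, fixing $T^*\ge T$ so that $t^{\gamma_0+1}/T_0\ge\theta$ on $[T^*,\infty)$ and using the continuity of $f,F$ on the compact $\bar\Om\times[-T^*,T^*]$, I would derive
\[f(x,t)t \ge \theta F(x,t)-C_1\quad\text{for every }(x,t)\in\bar\Om\times\mathbb{R}.\]
Multiplying by $F(y,u_k)/|x-y|^\mu\ge 0$, integrating, and using that $\sup_{y}\int_\Om |x-y|^{-\mu}dx<\infty$ for $\mu\in(0,n)$, one gets
\[J_k := -\tfrac{1}{2}A_k+\tfrac{1}{\theta}\!\int_\Om\!\int_\Om\!\frac{F(y,u_k)f(x,u_k)u_k}{|x-y|^\mu}\,dxdy\ \ge\ \tfrac{1}{2}A_k-C_2\!\int_\Om F(y,u_k)\,dy,\]
where $A_k:=\int_\Om\!\int_\Om F(y,u_k)F(x,u_k)|x-y|^{-\mu}dxdy$.

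The step I expect to be the main obstacle is controlling $\int_\Om F(y,u_k)\,dy$ \emph{without} already knowing $\|u_k\|$ is bounded, since a direct Trudinger--Moser estimate is unavailable in this regime. I would sidestep this by the elementary reverse-type bound $|x-y|^{-\mu}\ge (\mathrm{diam}\,\Om)^{-\mu}$ on $\Om\times\Om$: combined with $F\ge 0$, Fubini gives $\bigl(\int_\Om F\bigr)^2\le (\mathrm{diam}\,\Om)^\mu A_k$, that is $\int_\Om F\le D\sqrt{A_k}$. Young's inequality then absorbs $C_2 D\sqrt{A_k}$ into $\tfrac{1}{2}A_k$, so $J_k\ge -C_3$. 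Combining with the Kirchhoff estimate yields
\[\Bigl(\tfrac{1}{2n}-\tfrac{1}{\theta}\Bigr)m_0\|u_k\|^n-C_3\ \le\ c+1+\e_k\|u_k\|,\]
which forces $\{u_k\}$ to be bounded since $n\ge 2$.
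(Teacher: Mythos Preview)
Your argument is correct, but it takes a longer road than the paper's. The paper observes that hypothesis $(h3)$ (monotonicity of $h(x,t)/t^\ell$ with $\ell>n-1$) yields a \emph{clean} Ambrosetti--Rabinowitz inequality $\alpha F(x,t)\le t f(x,t)$ for all $t>0$ with $\alpha=\ell+1>n$ and no additive constant. With this, the Choquard contribution in $E(u_k)-\frac{1}{2\alpha}\langle E'(u_k),u_k\rangle$ is immediately nonnegative, and the Kirchhoff part is handled exactly as you do (your $\theta$ playing the role of $2\alpha$). You instead extract AR from $(h4)$, which only holds for $|t|\ge T$ and therefore produces an additive constant $C_1$; this forces you into the extra step of controlling $\int_\Om F(\cdot,u_k)$ via the pointwise lower bound $|x-y|^{-\mu}\ge(\mathrm{diam}\,\Om)^{-\mu}$ together with Young's inequality. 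That absorption trick is neat and fully rigorous, and it has the merit of showing boundedness would still go through under a weaker AR-type hypothesis; but here, with $(h3)$ available, the paper's route avoids the detour entirely.
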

\begin{proof}
Let $\{u_k\} \subset W^{1,n}_0(\Om)$ denotes a $(PS)_c$ sequence of $E$ that is
\begin{equation*}
E(u_k) \to c \; \text{and}\; E^\prime(u_k) \to 0\;\text{as}\; k \to \infty
\end{equation*}
for some $c \in \mathbb{R}.$ This implies
\begin{equation}\label{kc-PS-bdd1}
\begin{split}
&\frac{M(\|u_k\|^n)}{n} - \frac12 \int_\Om \left(\int_\Om \frac{F(y,u_k)}{|x-y|^{\mu}}dy \right)F(x,u_k)~dx \to c \; \text{as}\; k \to \infty,\\
&\left| m(\|u_k\|^n)\int_\Om |\nabla u_k|^{n-2}\nabla u_k \nabla\phi -\int_\Om \left(\int_\Om \frac{F(y,u_k)}{|x-y|^{\mu}}dy \right)f(x,u_k)\phi ~dx \right|\leq \e_k\|\phi\|
\end{split}
\end{equation}
where $\e_k \to 0$ as $k\to \infty$. In particular, taking $\phi=u_k$ we get
\begin{equation}\label{kc-PS-bdd2}
\left| m(\|u_k\|^n)\int_\Om |\nabla u_k|^{n}-\int_\Om \left(\int_\Om \frac{F(y,u_k)}{|x-y|^{\mu}}dy \right)f(u_k)u_k ~dx \right|\leq \e_k\|u_k\|.
\end{equation}
From the assumption (h3), there exists $\alpha>n$ such  that $\alpha F(x,t)\leq tf(x,t)$ for any $t>0$ and $ x\in \Om$ which yields
\begin{equation}\label{kc-PS-bdd3}
\alpha \int_\Om \left(\int_\Om \frac{F(y,u_k)}{|x-y|^{\mu}}dy \right)F(u_k)~dx \leq \int_\Om \left(\int_\Om \frac{F(y,u_k)}{|x-y|^{\mu}}dy \right)f(u_k)u_k ~dx.
\end{equation}
Using  \eqref{kc-PS-bdd1}, \eqref{kc-PS-bdd2} along with above inequality and $(m3)^\prime$, we get
\begin{equation}\label{kc-PS-bdd4}
\begin{split}
& E(u_k)- \frac{1}{2\alpha}\langle E^\prime(u_k),u_k\rangle
=\frac{M(\|u_k\|^n)}{n}- \frac{m(\|u_k\|^n)\|u_k\|^n}{2 \alpha}  \\
& \quad \quad-\frac12 \left( \int_\Om \left(\int_\Om \frac{F(y,u_k)}{|x-y|^{\mu}}dy \right)F(x,u_k)~dx- \frac{1}{\alpha}\int_\Om \left(\int_\Om \frac{F(y,u_k)}{|x-y|^{\mu}}dy \right)f(x,u_k)u_k ~dx\right)\\
&{\geq \frac{M(\|u_k\|^n)}{n}- \frac{m(\|u_k\|^n)\|u_k\|^n}{2 \alpha}}  {\geq \left( \frac{1}{2n}- \frac{1}{2 \alpha}\right) m(\|u_k\|^n)\|u_k\|^n  \geq  \left( \frac{1}{2n}- \frac{1}{2 \alpha}\right) m_0 \|u_k\|^n}.
\end{split}
\end{equation}
Also from \eqref{kc-PS-bdd1} and \eqref{kc-PS-bdd2} it follows that
\begin{equation}\label{kc-PS-bdd5}
E(u_k)- \frac{1}{2\alpha}\langle E^\prime(u_k),u_k\rangle \leq C \left( 1+ \e_k \frac{\|u_k\|}{2\alpha}\right)
\end{equation}
for some constant $C>0$. Therefore from \eqref{kc-PS-bdd4} and \eqref{kc-PS-bdd5} we get that
\[ \left( \frac{1}{2n}- \frac{1}{2 \alpha}\right) m_0 \|u_k\|^n  \leq C \left( 1+ \e_k \frac{\|u_k\|}{2\alpha}\right).\]
This implies that $\{u_k\}$ must be bounded in $W^{1,n}_0(\Om)$. \hfill{\QED}
\end{proof}

Let $\Gamma = \{\gamma \in C([0,1], W^{1,n}_0(\Om)):\; \gamma(0)=0, \;E(\gamma(1))<0\}$ and define the Mountain Pass critical level as\
\begin{equation}\label{*}
l^* = \inf_{\gamma \in \Gamma}\max_{t\in[0,1]} E(\gamma(t)).
\end{equation}
Then we have the following result:

\begin{Lemma}\label{PS-level}
If \eqref{h-growth} hold,  then $$0< l^* < \displaystyle \frac{1}{n} M \left(\left( \frac{2n-\mu}{2n}\alpha_n\right)^{n-1}\right).$$
\end{Lemma}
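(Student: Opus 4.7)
The lower bound $l^* > 0$ is immediate from Lemma~\ref{lemma3.1}: every $\gamma \in \Gamma$ starts at $0$ and ends at a point of negative energy, so it must cross the sphere $\{\|u\| = \rho\}$ on which $E \geq \sigma > 0$.

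For the upper bound, my plan is to use the classical Moser concentrating functions. Fix $x_0 \in \Om$ and $R > 0$ with $B_R(x_0) \subset \Om$, and set
\[
M_k(x) = \omega_{n-1}^{-1/n}\begin{cases}(\log k)^{(n-1)/n}, & |x-x_0| \leq R/k,\\ (\log k)^{-1/n}\log\bigl(R/|x-x_0|\bigr), & R/k \leq |x-x_0| \leq R,\\ 0, & |x-x_0| \geq R,\end{cases}
\]
normalized so that $\|M_k\| = 1$. Since $E(tM_k) \to -\infty$ as $t \to \infty$ by the proof of Lemma~\ref{lemma3.1}, choosing $T_k$ large enough makes $\gamma_k(s) := sT_k M_k$ an element of $\Gamma$. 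Hence $l^* \leq \max_{t \geq 0}E(tM_k) = E(t_k M_k)$ at some maximizer $t_k > 0$.

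I would then argue by contradiction: assume $E(t_k M_k) \geq \frac{1}{n}M\bigl(\bigl(\tfrac{2n-\mu}{2n}\alpha_n\bigr)^{n-1}\bigr)$ for every $k$. Since $F \geq 0$, the Choquard term in $E$ has a negative sign, giving $E(t_k M_k) \leq M(t_k^n)/n$; monotonicity of $M$ (from $m \geq m_0 > 0$) then forces $t_k^{n/(n-1)} \geq (2n-\mu)\alpha_n/(2n)$. The critical-point relation $\frac{d}{dt}E(tM_k)|_{t_k}=0$ reads
\[
m(t_k^n)t_k^n = \int_\Om\int_\Om \frac{F(y, t_k M_k)\, f(x, t_k M_k)\, t_k M_k(x)}{|x-y|^\mu}\, dy\, dx.
\]
Restrict both integrals to $B_{R/k}(x_0)$, where $t_k M_k$ is the constant $s_k := t_k(\log k)^{(n-1)/n}/\omega_{n-1}^{1/n}$, symmetrize in $x,y$, and apply the AM--GM inequality
\[
F(y,s_k)f(x,s_k) + F(x,s_k)f(y,s_k) \geq 2\sqrt{F(x,s_k)f(x,s_k)}\sqrt{F(y,s_k)f(y,s_k)}.
\]
Hypothesis \eqref{h-growth}, which yields $sf(x,s)F(x,s) \geq L\, e^{2 s^{n/(n-1)}}$ uniformly in $x$ for any prescribed $L>0$ and $s$ sufficiently large, then bounds the right-hand side of the Euler--Lagrange relation from below by
\[
C\, L\, R^{2n-\mu}\, k^{\mu-2n}\, \exp\bigl(2 s_k^{n/(n-1)}\bigr),
\]
after using the scaling $\iint_{B_{R/k}^2}|x-y|^{-\mu}\, dx\, dy \sim (R/k)^{2n-\mu}$.

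Now $s_k^{n/(n-1)} = n t_k^{n/(n-1)}(\log k)/\alpha_n$, so the lower bound on $t_k^{n/(n-1)}$ gives $\exp(2 s_k^{n/(n-1)}) \geq k^{2n-\mu}$, and the overall lower bound on the right-hand side becomes $C L R^{2n-\mu}$, which can be made arbitrarily large by choosing $L$ large (and $k$ correspondingly so that $s_k$ exceeds the threshold demanded by \eqref{h-growth}). On the other hand, by (m2), the left-hand side $m(t_k^n)t_k^n$ is at most polynomial in $t_k$; whether $t_k$ stays bounded or diverges with $k$, it cannot keep pace with a right-hand side that grows super-polynomially in $k$ through the $\exp(2 s_k^{n/(n-1)})$ factor. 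This contradiction establishes the strict inequality. The main obstacle, and the reason for the AM--GM symmetrization, is that \eqref{h-growth} provides only the \emph{diagonal} estimate on $sf(x,s)F(x,s)$, while the Choquard integrand pairs $F$ at $y$ with $f$ at $x$; the symmetrization step is precisely what reduces the off-diagonal product to a geometric mean of diagonal ones, to which \eqref{h-growth} applies directly.
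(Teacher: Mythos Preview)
Your approach is the same Moser-function contradiction scheme as the paper's: assume the maximum $E(t_kM_k)$ stays above the threshold, extract $t_k^{n/(n-1)}\geq(2n-\mu)\alpha_n/(2n)$, restrict the critical-point identity to the plateau, and invoke \eqref{h-growth}. Two points deserve comment.

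First, your AM--GM symmetrization is a genuine refinement. The paper applies the diagonal hypothesis $sf(x,s)F(x,s)\geq d\,\exp(2s^{n/(n-1)})$ directly to the off-diagonal integrand $F(y,s_k)f(x,s_k)s_k$, which is only literally correct when $f$ does not depend on the spatial variable; your reduction of $F(y,\cdot)f(x,\cdot)+F(x,\cdot)f(y,\cdot)$ to $2\sqrt{F(x,\cdot)f(x,\cdot)}\sqrt{F(y,\cdot)f(y,\cdot)}$ handles the general case cleanly.

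Second, your closing contradiction is looser than the paper's. The paper first argues that $\{t_k\}$ is bounded (from $E(tw_k)\to-\infty$ as $t\to\infty$ uniformly in $k$), after which the contradiction is immediate: the right side of the critical-point identity is at least $dC_{\mu,n}R^{2n-\mu}$ with $d$ arbitrary, while $m(t_k^n)t_k^n$ remains bounded. Your sentence ``whether $t_k$ stays bounded or diverges with $k$, it cannot keep pace with a right-hand side that grows super-polynomially in $k$'' conflates the two cases. In the bounded case your lower bound is the \emph{constant} $CLR^{2n-\mu}$, not anything growing in $k$; in the divergent case the left side also grows, and you still owe the reader the comparison $\log t_k=o\bigl(t_k^{n/(n-1)}\log k\bigr)$ that makes the polynomial-in-$t_k$ bound lose to the exponential $k^{2nt_k^{n/(n-1)}/\alpha_n-(2n-\mu)}$. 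Either split the cases explicitly, or establish boundedness of $t_k$ first as the paper does.
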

\begin{proof}
Since for $u\not\equiv 0$, $E(tu) \to -\infty$ as $t\to \infty$ (as we proved in Lemma~\ref{lemma3.1}),   $l^* \leq \max_{t\in[0,1]} E(tu)$ for $u \in W^{1,n}_0(\Omega)\backslash\{0\}$. So it is enough to show that there exists a $w \in W^{1,n}_0(\Om)$ such that $\|w\|=1$ and
\[\max_{t\in[0,\infty)} E(tw) < \frac{1}{n} M\left(\left( \frac{2n-\mu}{2n}\alpha_n\right)^{n-1}\right).\]
To prove this, we consider the sequence of Moser functions $\{w_k\}$ defined as
\begin{equation*}
w_k(x)=\frac{1}{\omega_{n-1}^{\frac{1}{n}}}\left\{
\begin{split}
& (\log k)^{\frac{n-1}{n}},\; 0\leq |x|\leq \frac{\rho}{k},\\
& \frac{\log \left(\frac{\rho}{|x|}\right)}{(\log k)^{\frac{1}{n}}}, \; \frac{\rho}{k}\leq |x|\leq \rho,\\
& 0,\; |x|\geq \rho
\end{split}
\right.
\end{equation*}
so that supp$(w_k) \subset B_\rho(0)$. It is easy to verify that $\|w_k\| =1$ for all $k$. So we claim that there exists a $k \in \mb N$ such that
\[\max_{t\in[0,\infty)} E(tw_k) < \frac{1}{n} M\left(\left( \frac{2n-\mu}{2n}\alpha_n\right)^{n-1}\right).\]
Suppose this is not true then for all $k \in \mb N$ there exists a $t_k>0$ such that
\begin{equation}\label{kc-PScond0}
\begin{split}
&\max_{t\in[0,\infty)} E(tw_k) = E(t_kw_k) \geq \frac{1}{n} M\left(\left( \frac{2n-\mu}{2n}\alpha_n\right)^{n-1}\right)\\
& \text{and}\;  \frac{d}{dt}(E(tw_k))|_{t=t_k}=0.
\end{split}
\end{equation}
From the proof of Lemma \ref{kc-PS-bdd}, $E(t w_k)\to -\infty$ as $t\to \infty$ uniformly in $k$. Then we infer that $\{t_k\}$ must be a bounded sequence in $\mb R$. From \eqref{kc-PScond0} and definition of $E(t_kw_k)$ we obtain
\begin{equation}\label{kc-PScond1}
\frac{1}{n} M\left(\left( \frac{2n-\mu}{2n}\alpha_n\right)^{n-1}\right) < \frac{M(t_k^n)}{n}.
\end{equation}
Since $M$ is monotone increasing, from \eqref{kc-PScond1} we get that
\begin{equation}\label{kc-PScond2}
t_k^n \geq \left( \frac{2n-\mu}{2n}\alpha_n\right)^{n-1}.
\end{equation}
From \eqref{kc-PScond2}, we get
\begin{equation}\label{kc-PScond3}
\frac{t_k}{\omega_{n-1}^{\frac{1}{n}}}(\log k)^{\frac{n-1}{n}} \to \infty \;\text{as}\; k \to \infty.
\end{equation}
Furthermore from \eqref{kc-PScond0},  we have
\begin{equation}\label{kc-PS-cond3}
\begin{split}
m(t_k^n)t_k^n &= \int_\Om \left(\int_\Om \frac{F(y,t_kw_k)}{|x-y|^{\mu}}dy\right)f(x,t_kw_k)t_kw_k ~dx\\
& \geq \int_{B_{\rho/k}}f(x,t_kw_k)t_kw_k \int_{B_{\rho/k}}\frac{F(y,t_kw_k)}{|x-y|^\mu}~dy~dx.
\end{split}
\end{equation}
 In addition, as in equation $(2.11)$ p. 1943 in \cite{yang-JDE}, it is easy to get that
\[\int_{B_{\rho/k}}\int_{B_{\rho/k}} \frac{~dxdy}{|x-y|^\mu} \geq C_{\mu, n} \left(\frac{\rho}{k}\right)^{2n-\mu}\]
where $C_{\mu, n}$ is a positive constant depending on $\mu$ and $n$. From \eqref{h-growth}, we know that for each $d>0$ there exists a $s_d$ such that
\[sf(x,s)F(x,s) \geq d \exp\left( 2|s|^{\frac{n}{n-1}}\right),\; \text{whenever}\; s \geq s_d.\]
Since \eqref{kc-PScond3} holds, we can choose a $r_d \in \mb N$ such that
\[\frac{t_k}{\omega_{n-1}^{\frac{1}{n}}}(\log k)^{\frac{n-1}{n}} \geq s_d,\; \text{for all}\; k\geq r_d.\]
 Using these estimates in \eqref{kc-PS-cond3} and from \eqref{kc-PScond2}, for $d$ large enough
  we get that
\begin{equation*}
m(t_k^n)t_k^n \geq d \exp \left( (\log k)\left(\frac{2 t_k^{\frac{n}{n-1}}}{\omega_{n-1}^{\frac{1}{n-1}}}\right)\right)C_{\mu,n}\left(\frac{\rho}{k}\right)^{2n-\mu} \geq d  C_{\mu, n}\rho^{2n-\mu}.
\end{equation*}
Taking $d$ large enough and since $t_k^n$ is bounded, we arrive at a contradiction. This establishes our claim and we conclude the proof of the result. \hfill{\QED}
\end{proof}

\begin{Lemma}\label{wk-sol}
If $\{u_k\}$ denotes a Palais Smale sequence then up to a subsequence, there exists $u \in W^{1,n}_0(\Om)$ such that
\begin{equation}\label{wk-sol2}
 |\nabla u_k|^{n-2}\nabla u_k \rightharpoonup |\nabla u|^{n-2}\nabla u\; \text{weakly in}\; (L^{\frac{n}{n-1}}(\Om))^n.
\end{equation}
\end{Lemma}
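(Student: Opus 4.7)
The plan is to extract a subsequence along which $u_k \rightharpoonup u$ weakly in $W^{1,n}_0(\Om)$, bound $\{|\nabla u_k|^{n-2}\nabla u_k\}$ in $(L^{n/(n-1)}(\Om))^n$ to obtain a weak limit $V$, and then identify $V = |\nabla u|^{n-2}\nabla u$ via a.e.\ convergence of gradients. By Lemma \ref{kc-PS-bdd} the sequence is bounded in $W^{1,n}_0(\Om)$, so reflexivity together with the Rellich-Kondrachov theorem yields $u_k \rightharpoonup u$ weakly, $u_k \to u$ strongly in each $L^q(\Om)$ for $q \in [1,\infty)$, and $u_k \to u$ a.e.\ in $\Om$. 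Since $\||\nabla u_k|^{n-2}\nabla u_k\|_{L^{n/(n-1)}} = \|u_k\|^{n-1}$ is bounded, one can pass to a further subsequence weakly convergent in $(L^{n/(n-1)}(\Om))^n$ to some $V$.

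The central step is to test $E'(u_k) \to 0$ against $u_k - u \in W^{1,n}_0(\Om)$, so that $\langle E'(u_k), u_k - u\rangle \to 0$. The nontrivial part is to show that the Choquard contribution
\[
\mathcal{I}_k := \int_\Om \Bigl(\int_\Om \frac{F(y,u_k)}{|x-y|^\mu}\,dy\Bigr) f(x,u_k)(u_k - u)\,dx
\]
tends to $0$. I would dominate $\mathcal{I}_k$ using Proposition \ref{HLS} with $t = r = \tfrac{2n}{2n-\mu}$ by a multiple of $\|F(\cdot,u_k)\|_{L^{2n/(2n-\mu)}}\cdot\|f(\cdot,u_k)(u_k-u)\|_{L^{2n/(2n-\mu)}}$. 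The first factor stays uniformly bounded: apply \eqref{kc-1} and Theorem \ref{TM-ineq} using the boundedness of $\|u_k\|$ with $\e$ small. For the second factor I apply H\"older to separate $f(\cdot, u_k)$ from $u_k - u$, controlling $f(\cdot, u_k)$ in $L^{r_1}$ by Trudinger-Moser (choosing $r_1$ so that $r_1\cdot\tfrac{2n(1+\e)}{2n-\mu}\|u_k\|^{n/(n-1)}$ stays below $\alpha_n$) and using the strong $L^{r_2}$ convergence $u_k \to u$ for the conjugate exponent. Combined with the uniform lower bound $m(\|u_k\|^n) \geq m_0 > 0$ from (m1), this gives $\int_\Om |\nabla u_k|^{n-2}\nabla u_k \cdot \nabla(u_k - u)\,dx \to 0$.

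Subtracting the vanishing term $\int_\Om |\nabla u|^{n-2}\nabla u \cdot \nabla(u_k - u)\,dx$ (zero by weak $(L^n)^n$-convergence of $\nabla u_k$ to $\nabla u$) yields
\[
\int_\Om \bigl(|\nabla u_k|^{n-2}\nabla u_k - |\nabla u|^{n-2}\nabla u\bigr) \cdot \nabla(u_k - u)\,dx \longrightarrow 0.
\]
The classical monotonicity estimate $(|\xi|^{n-2}\xi - |\eta|^{n-2}\eta)\cdot(\xi-\eta) \geq c_n|\xi - \eta|^n$ valid for $n \geq 2$ then forces $\nabla u_k \to \nabla u$ strongly in $(L^n(\Om))^n$, and hence a.e.\ along a further subsequence. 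Consequently $|\nabla u_k|^{n-2}\nabla u_k \to |\nabla u|^{n-2}\nabla u$ a.e., and uniqueness of weak $L^{n/(n-1)}$-limits forces $V = |\nabla u|^{n-2}\nabla u$, proving \eqref{wk-sol2}.

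The principal obstacle is the estimate $\mathcal{I}_k \to 0$: the HLS exponent, the H\"older split, and the Trudinger-Moser margin must be tuned simultaneously against the boundedness (but not smallness) of $\|u_k\|$, with the exponential coupling between $F$ and $f$ inside a doubly nonlocal operator making the accounting delicate. Should the PS level lie too close to the critical Trudinger-Moser threshold for this direct uniform estimate to succeed, an alternative would be to substitute truncations $T_\de(u_k - u)$ as test functions in the Boccardo-Murat style, recovering a.e.\ convergence of gradients by a localization argument.
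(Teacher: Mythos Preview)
Your primary argument has a genuine gap at the step where you claim $\mathcal{I}_k \to 0$. Both the bound on $\|F(\cdot,u_k)\|_{L^{2n/(2n-\mu)}}$ via \eqref{kc-1} and the bound on $\|f(\cdot,u_k)\|_{L^{r_1}}$ via Theorem~\ref{TM-ineq} require choosing exponents so that $\tfrac{2n(1+\e)}{2n-\mu}\|u_k\|^{\frac{n}{n-1}}$ (times $r_1>1$ in the second case) stays below $\alpha_n$. But the lemma is stated for an \emph{arbitrary} Palais--Smale sequence, with no restriction on the level, and Lemma~\ref{kc-PS-bdd} only tells you that $\|u_k\|$ is bounded, not that it lies below the Trudinger--Moser threshold $\bigl(\tfrac{2n-\mu}{2n}\alpha_n\bigr)^{(n-1)/n}$. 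So there is no reason for such an $r_1>1$ to exist, and your direct global estimate on $\mathcal{I}_k$ does not go through. You notice this yourself in the final paragraph, but the Boccardo--Murat truncation you mention as a fallback is left entirely undeveloped, so as written the proof is incomplete.

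The paper circumvents this obstruction by a concentration--compactness localization rather than a global Trudinger--Moser bound. One passes to a subsequence along which $|\nabla u_k|^n \to \nu$ in $(C(\overline\Om))^*$ for some nonnegative Radon measure $\nu$, and shows that the set $X_\sigma$ of points where $\nu$ charges every ball by at least $\sigma$ is finite. On any compact set $K$ avoiding $X_\sigma$, the \emph{local} Dirichlet energy in small balls is below $\sigma$, so for $\sigma^{1/(n-1)}<\tfrac{2n-\mu}{2n}\alpha_n$ one recovers uniform $L^q$ bounds on $f(\cdot,u_k)$ over those balls from Theorem~\ref{TM-ineq}. Combined with the semigroup property of the Riesz potential (to decouple the doubly nonlocal term) and Vitali's theorem, this yields
\[
\lim_{k\to\infty}\int_K\Bigl(\int_\Om\frac{F(y,u_k)}{|x-y|^\mu}\,dy\Bigr)f(x,u_k)u_k\,dx=\int_K\Bigl(\int_\Om\frac{F(y,u)}{|x-y|^\mu}\,dy\Bigr)f(x,u)u\,dx
\]
for every such $K$. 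From this local control on the nonlinear term, the pointwise convergence of gradients (and hence \eqref{wk-sol2}) follows by the classical argument of \cite{M-do1}. The key conceptual difference from your approach is that the paper never needs a global exponential bound on $f(\cdot,u_k)$: it only needs it locally, away from a finite exceptional set, and concentration--compactness guarantees exactly that.
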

\begin{proof}
From Lemma \ref{kc-PS-bdd}, we know that the sequence $\{u_k\}$ must be bounded in $W^{1,n}_0(\Om)$. Consequently, up to a subsequence, there exists $u\in W^{1,n}_0(\Omega)$ such that $u_k \rightharpoonup u$ weakly in $W_0^{1,n}(\Om)$ and strongly in $L^q(\Om)$ for any $q \in [1,\infty)$ as $k \to \infty$. Also still up to a subsequence we can assume $u_k(x) \to u(x)$ pointwise a.e. for $ x \in \Om$. Therefore the sequence $\{|\nabla u_k|^{n-2}\nabla u_k\}$ must be bounded in $(L^{\frac{n}{n-1}}(\Om))^n$ whereas $|\nabla u|^n$ is bounded in $L^1(\Om)$. So we use that there exists a non-negative radon measure $\nu$ such that up to a subsequence
\[|\nabla u_k|^n \to \nu \; \text{in}\; (C(\overline{\Om}))^*\; \text{as}\; k \to \infty.\]
Moreover there exists $v \in (L^{\frac{n}{n-1}}(\Om))^n$ such that,
\[|\nabla u_k|^{n-2}\nabla u_k \to v \; \text{weakly in}\;  (L^{\frac{n}{n-1}}(\Om))^n \; \text{as} \; k \to \infty.\]
\textbf{Claim :} $v = |\nabla u|^{n-2}\nabla u $.\\
To prove this, we set $\sigma>0$ and $X_\sigma = \{x \in \overline \Om:\; \nu(B_r(x)\cap \overline \Om)\geq \sigma, \;\text{for all}\; r>0\}$. Then $X_\sigma$ must be a finite set.  Because if not, then there exists a sequence of distinct points $\{x_k\}$ in $X_\sigma$ such that for all $r>0$, $\nu(B_r(x_k)\cap \overline \Om)\geq \sigma$ for all $k$. This implies that $\nu(\{x_k\}) \geq \sigma$ for all $k$, hence $\nu(X_\sigma)= +\infty$. But this is a contradiction to
\[\nu(X_\sigma) = \lim_{k \to \infty} \int_{X_\sigma} |\nabla u_k|^n ~dx \leq C.\]
So let $X_\sigma= \{x_1,x_2,\ldots, x_m\}$. Next, we claim that if we take  $\sigma>0$ such that $\sigma^{\frac{1}{n-1}} <  \frac{2n-\mu}{2n} \alpha_n$, the  for any $K$ compact subset of $\oline \Om \setminus X_\sigma$ we have
\begin{equation}\label{wk-sol6}
\lim_{k \to \infty} \int_K\left( \int_\Om \frac{F(y,u_k)}{|x-y|^{\mu}}dy\right) f(x,u_k)u_k~dx =  \int_K \left( \int_\Om \frac{F(y,u)}{|x-y|^{\mu}}dy\right) f(x,u)u~dx. \end{equation}
To show this, let $x_0 \in K$ and $r_0>0$ be such that $\nu(B_{r_0}(x_0) \cap \oline \Om) < \sigma$ that is $x_0 \notin X_\sigma$. Also we consider a $\psi \in C^\infty(\Om)$ satisfying $0\leq \psi(x)\leq 1$ for $x \in \Om$, $\psi \equiv 1$ in $B_{\frac{r_0}{2}}(x_0)\cap \oline \Om$ and $\psi \equiv 0$ in $\oline \Om \setminus (B_{r_0}(x_0)\cap \oline \Om)$. Then
\[\lim_{k \to \infty} \int_{B_{\frac{r_0}{2}}(x_0) \cap \oline \Om}|\nabla u_k|^n \leq \lim_{k \to \infty} \int_{B_{r_0}(x_0) \cap \oline \Om}|\nabla u_k|^n\psi \leq \nu(B_{r_0}(x_0) \cap \oline \Om) < \sigma. \]
Therefore for large enough $k \in \mb N$ and $\e>0$ small enough, it must be
\begin{equation}\label{wk-sol3}
 \int_{B_{\frac{r_0}{2}}(x_0) \cap \oline \Om}|\nabla u_k|^n \leq \sigma(1-\e).
 \end{equation}
Now we estimate the following using \eqref{wk-sol3} and Theorem \ref{TM-ineq}
\begin{equation}\label{wk-sol4}
\begin{split}
& \int_{B_{\frac{r_0}{2}}(x_0) \cap \oline \Om}|f(x,u_k)|^q~dx=  \int_{B_{\frac{r_0}{2}}(x_0) \cap \oline \Om}|h(x,u_k)|^q\exp\left(q|u_k|^{\frac{n}{n-1}}\right)~dx \\
&  \leq C_\delta \int_{B_{\frac{r_0}{2}}(x_0) \cap \oline \Om}\exp\left((1+\epsilon)q|u_k|^{\frac{n}{n-1}}\right)~dx\\
& \leq C_\delta \int_{B_{\frac{r_0}{2}}(x_0) \cap \oline \Om}\exp\left((1+\epsilon)q\sigma^{\frac{1}{n-1}}(1-\e)^{\frac{1}{n-1}}\left(\frac{|u_k|^n}{\int_{B_{\frac{r_0}{2}}(x_0) \cap \oline \Om}|\nabla u_k|^n}\right)^{\frac{1}{n-1}}\right)~dx\leq C_0
\end{split}
\end{equation}
for some constant $C_0>0$ while choosing $q>1$ such that $(1+\e)q \sigma^{\frac{1}{n-1}} \leq \alpha_n$. Consider
\begin{equation*}
\begin{split}
 &\int_{B_{\frac{r_0}{2}}(x_0) \cap \oline \Om} \left|\left( \int_\Om \frac{F(y,u_k)}{|x-y|^{\mu}}dy\right) f(x,u_k)u_k- \left( \int_\Om \frac{F(y,u)}{|x-y|^{\mu}}dy\right) f(x,u)u \right|~dx\\
 & \leq \int_{B_{\frac{r_0}{2}}(x_0) \cap \oline \Om} \left|\left( \int_\Om \frac{F(y,u)}{|x-y|^{\mu}}dy\right) (f(x,u_k)u_k-f(x,u)u)\right|~dx\\
 & \quad + \int_{B_{\frac{r_0}{2}}(x_0) \cap \oline \Om} \left|\left( \int_\Om \frac{F(y,u_k)-F(y,u)}{|x-y|^{\mu}}dy\right) f(x,u_k)u_k\right|~dx\\
 &:= I_1 + I_2 \;\text{(say)}.
\end{split}
\end{equation*}
From \eqref{KC-new1}, we know that $F(u) \in L^r(\Om)$ for any $ r \in [1,\infty)$. Since $\mu \in (0,n)$, $y\to |x-y|^{-\mu} \in L^{r_0}(\Om)$ for all $r_0 \in (1, \frac{n}{\mu})$ uniformly in $x\in \Omega$ (since $\Om$ is bounded). So using H\"older's inequality we get that
\begin{equation}\label{wk-sol7}
\int_\Om \frac{F(y,u)}{|x-y|^{\mu}}dy \in L^\infty(\Om).
\end{equation}
From the asymptotic growth of $f(x,t)$, it is easy to get that
\begin{equation}\label{wk-sol8}
\lim_{t \to \infty} \frac{f(x,t)t}{(f(x,t))^r} = 0\; \text{uniformly in }x \in \Omega,\; \text{for all}\; r>1.
\end{equation}
Using \eqref{wk-sol7} we get
\[I_1 \leq  C\int_{B_{\frac{r_0}{2}}(x_0) \cap \oline \Om}  |f(x,u_k)u_k-f(x,u)u|~dx\]
where $C>0$ is a constant. Because of \eqref{wk-sol8} and \eqref{wk-sol4}, the family $\{f(x,u_k)u_k\}$ is equi-integrable over ${B_{\frac{r_0}{2}}(x_0) \cap \oline \Om} $. Also continuity of $f(x,t)$ gives that $f(x,u_k)u_k\to f(x,u)u$ pointwise a.e. in $\Om$ as $k \to \infty$ and  thus using Vitali's convergence theorem, it follows that $I_1 \to 0$ as $k\to \infty$. Next we show $I_2 \to 0$ as $k \to \infty$.\\
First by using the semigroup property of the Riesz Potential we get that for some constant $C>0$ independent of $k$
\begin{align*}
\begin{split}
&\int_{\Omega} \left(\int_{\Omega}\frac{F(y,u_k)-F(y,u)}{|x-y|^{\mu}} dy \right) \chi_{B_{\frac{r_0}{2}} \cap \overline{\Omega}}(x) f(x,u_k) u_k ~dx \\
&\leq \left(\int_{\Omega} \left( \int_{\Omega}\frac{|F(y,u_k)- F(y,u)| dy }{|x-y|^{\mu}}\right) |F(x,u_k)- F(x,u)| ~dx \right)^{\frac12}\\
&\quad \times \left(\int_{\Omega} \left(\int_{\Omega} \chi_{B_{\frac{r_0}{2}} \cap \overline{\Omega}}(y) \frac{f(y,u_k) u_k}{|x-y|^{\mu}} dy \right) \chi_{B_{\frac{r_0}{2}} \cap \overline{\Omega}}(x) f(x,u_k) u_k ~dx \right)^{\frac12}.
\end{split}
 \end{align*}
From \eqref{wk-sol4} and since $\sigma^{\frac{1}{n-1}} < \frac{2n-\mu}{2n} \alpha_n$ we obtain
\begin{align*}
\left(\int_{\Omega} \left(\int_{\Omega} \chi_{B_{\frac{r_0}{2}} \cap \overline{\Omega}}(y) f(y,u_k) u_k dy \right) \chi_{B_{\frac{r_0}{2}} \cap \overline{\Omega}}(x) f(x,u_k) u_k ~dx \right)^{\frac12} \leq \|\chi_{B_{\frac{r_0}{2}} \cap \overline{\Omega}}f(x,u_k) u_k\|_{L^{\frac{2n}{2n-\mu}}(\Omega)} \leq C.
\end{align*}
Now we claim that
\begin{align}\label{3.23}
\lim_{k \to \infty} \int_{\Omega} \left(\int_{\Omega}\frac{|F(y,u_k)-F(y,u)|}{|x-y|^{\mu}} dy \right) |F(x,u_k)-F(x,u)|  ~dx =0.
\end{align}
From \eqref{kc-PS-bdd1}, \eqref{kc-PS-bdd2} and \eqref{kc-PS-bdd3} we get that there exists a constant $C>0$ such that
\begin{equation}\label{wk-sol10}
\begin{split}
\int_\Om \left(\int_\Om\frac{F(y,u_k)}{|x-y|^\mu}dy\right)F(x,u_k)~dx &\leq C,\\
\int_\Om \left(\int_\Om\frac{F(y,u_k)}{|x-y|^\mu}dy\right)f(x,u_k)u_k~dx & \leq C.
\end{split}
\end{equation}
We argue as along equation $(2.20)$ in Lemma $2.4$ in \cite{yang-JDE}. Now using \eqref{wk-sol10}, (h4) and the semigroup property of the Riesz Potential we obtain,
\begin{equation}\label{3.25}
\int_{\Omega} \int_{|u| \geq M} \frac{F(y,u)}{|x-y|^{\mu}} F(x,u) dy ~dx=o(M), \;\int_{\Omega} \int_{|u_k| \geq M} \frac{F(y,u_k)}{|x-y|^{\mu}} F(x,u_k) dy ~dx = o(M),
\end{equation}
\begin{equation}\label{3.26}
\int_{\Omega} \int_{|u| \geq M} \frac{F(y,u_k)}{|x-y|^{\mu}} F(x,u) dy ~dx = o(M),
\end{equation}
and
\begin{equation}\label{3.27}
\int_{\Omega} \int_{|u_k| \geq M} \frac{F(y,u_k)}{|x-y|^{\mu}} F(x,u) dy ~dx = o(M) \ \text{as}\ M \to \infty.
\end{equation}
So,
\begin{equation*}
\begin{split}
& \int_{\Omega} \left(\int_{\Omega}\frac{|F(y,u_k)-F(y,u)|}{|x-y|^{\mu}} dy \right) |F(x,u_k)-F(x,u)|  ~dx\leq
2\int_{\Omega} \left(\int_{\Omega}\frac{\chi_{u_k\geq M}(y)F(y,u_k)}{|x-y|^{\mu}} dy \right) F(x,u_k) ~dx \\
&+4 \int_{\Omega} \left(\int_{\Omega}\frac{F(y,u_k)\chi_{u \geq M}(x)F(x,u)}{|x-y|^{\mu}} dy \right) ~dx+4 \int_{\Omega} \left(\int_{\Omega}\frac{\chi_{u_k\geq M}(y)F(y,u_k)F(x,u)}{|x-y|^{\mu}} dy \right) ~dx\\
&+2\int_{\Omega} \left(\int_{\Omega}\frac{\chi_{u\geq M}(y)F(y,u)}{|x-y|^{\mu}} dy \right) F(x,u) ~dx \\
&+\int_\Omega\left(\int_\Omega\frac{|F(y,u_k)\chi_{u_k\leq M}-F(y,u)\chi_{u\leq M}|}{|x-y|^\mu}dy\right)|F(x,u_k)\chi_{u_k\leq M}-F(x,u)\chi_{u\leq M}|~dx.
\end{split}
\end{equation*}
Then from Lebesgue dominated convergence theorem the above integrand tends to $0$ as $k \to \infty.$ Hence using \eqref{3.25}, \eqref{3.26} and \eqref{3.27}, it is easy to conclude \eqref{3.23} and $I_2 \to 0$ as $k \to \infty$.
This implies that
\[\lim_{k \to \infty}\int_{B_{\frac{r_0}{2}}(x_0) \cap \oline \Om} \left|\left( \int_\Om \frac{F(y,u_k)}{|x-y|^{\mu}}dy\right) f(x,u_k)u_k- \left( \int_\Om \frac{F(y,u)}{|x-y|^{\mu}}dy\right) f(x,u)u \right|~dx=0.\]
Now to conclude \eqref{wk-sol6}, we repeat this procedure over a finite covering of balls using the fact that $K$ is compact. Lastly, the proof of \eqref{wk-sol2} can be achieved by classical arguments as in the proof of Lemma $4$ in \cite{M-do1}. \hfill{\QED}
\end{proof}

\begin{Lemma}\label{kc-ws}
Let $\{u_k\}\subset W^{1,n}_0(\Om)$ be a Palais Smale sequence for $E$ at level $l^*$ then under the assumption (h4), there exists a $u_0\in W^{1,n}_0(\Om)$ such that as $k \to \infty$ (up to asubsequence)
\[\int_\Om \left(\int_\Om \frac{F(y,u_k)}{|x-y|^{\mu}}dy\right)f(x,u_k)\phi ~dx\to \int_\Om \left(\int_\Om \frac{F(y,u_0)}{|x-y|^{\mu}}dy\right)f(x,u_0)\phi~dx,\;\text{for all}\; \phi\in C_c^\infty(\Om). \]
\end{Lemma}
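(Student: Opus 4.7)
By Lemma \ref{kc-PS-bdd} the sequence $\{u_k\}$ is bounded in $W^{1,n}_0(\Om)$; passing to a subsequence we obtain $u_0 \in W^{1,n}_0(\Om)$ with $u_k \rightharpoonup u_0$ weakly in $W^{1,n}_0(\Om)$, $u_k \to u_0$ strongly in every $L^q(\Om)$ with $q \in [1,\infty)$, and $u_k \to u_0$ a.e.\ in $\Om$. Lemma \ref{wk-sol} further provides, for $\sigma > 0$ chosen with $\sigma^{1/(n-1)} < \frac{2n-\mu}{2n}\alpha_n$, a finite concentration set $X_\sigma = \{x_1,\ldots,x_m\}$, a uniform $L^q$ bound \eqref{wk-sol4} on $f(\cdot, u_k)$ on balls not intersecting $X_\sigma$, and the uniform estimates \eqref{wk-sol10}.

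The strategy is to regard the non-local integrand as the bilinear pairing via the Riesz potential of the two factors $F(\cdot, u_k)$ and $f(\cdot, u_k)\phi$, and to establish strong convergence of each factor separately in $L^{2n/(2n-\mu)}(\Om)$. For the first factor, I would prove $F(\cdot, u_k) \to F(\cdot, u_0)$ strongly in $L^{2n/(2n-\mu)}(\Om)$ by a truncation argument analogous to \eqref{3.23}: split at height $M$, use pointwise convergence combined with dominated convergence on $\{|u_k| \leq M\}$, and on $\{|u_k| > M\}$ invoke (h4) to dominate $F$ by $T_0 f/|t|^{\gamma_0}$, together with \eqref{3.25}--\eqref{3.27}, to show the tail is $o(1)$ as $M \to \infty$ uniformly in $k$. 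By Hardy--Littlewood--Sobolev this promotes to strong convergence of $|x|^{-\mu}\ast F(\cdot,u_k)$ to $|x|^{-\mu}\ast F(\cdot,u_0)$ in $L^{2n/\mu}(\Om)$.

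For the second factor I would show $f(\cdot, u_k)\phi \to f(\cdot, u_0)\phi$ in $L^{2n/(2n-\mu)}(\Om)$. Setting $K := \mathrm{supp}(\phi)$ and decomposing $K = K_r \cup \bigcup_i \overline{B_r(x_i)}$ with $K_r$ avoiding $X_\sigma$, the bound \eqref{wk-sol4} on $K_r$ (with exponent $q > 2n/(2n-\mu)$) together with Vitali's theorem gives $L^{2n/(2n-\mu)}$ convergence on $K_r$. The remaining contribution, after reintroducing the convolution factor and using $|\phi| \leq \|\phi\|_{L^\infty(\Om)}$, is controlled by
\[
\|\phi\|_{L^\infty(\Om)}\int_{\bigcup_i B_r(x_i)}\Bigl(\int_\Om \frac{F(y,u_k)}{|x-y|^\mu}\,dy\Bigr) f(x,u_k)\,dx.
\]
Proving that this quantity tends to $0$ as $r\to 0$ uniformly in $k$ is the main obstacle; I would attack it via the semigroup property of the Riesz potential, the trade of $f$ with $F/|t|^{\gamma_0}$ on large-value sets coming from (h4), and the uniform $L^1$ estimates \eqref{wk-sol10}. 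Once this tail control is secured, the two convergences combine through the Hardy--Littlewood--Sobolev bilinear inequality to yield the claim for arbitrary $\phi \in C_c^\infty(\Om)$.
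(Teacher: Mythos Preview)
Your strategy diverges from the paper's, and the ``main obstacle'' you identify is a genuine gap that the tools you list do not close. First, strong convergence of $F(\cdot,u_k)$ in $L^{2n/(2n-\mu)}(\Om)$ is not available: the bound in \eqref{wk-sol10} controls $\int(|x|^{-\mu}\ast F(u_k))F(u_k)$, and Hardy--Littlewood--Sobolev bounds this quantity \emph{above} by $\|F(u_k)\|_{2n/(2n-\mu)}^2$, not below, so you have no uniform $L^{2n/(2n-\mu)}$ bound on $F(u_k)$ with which to run Vitali. Second, for the tail near $X_\sigma$: assumption (h4) reads $F(x,t)\le T_0\, t^{-\gamma_0} f(x,t)$, which dominates $F$ by $f$, not the reverse, so it cannot be used to trade $f$ down. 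Splitting at height $M$ does give $\int_{B_r\cap\{u_k>M\}}(|x|^{-\mu}\ast F(u_k))f(u_k)\le C/M$ via \eqref{wk-sol10}, but on $\{u_k\le M\}\cap B_r$ you are left with $C_M\int_{B_r}(|x|^{-\mu}\ast F(u_k))\,dx$, and without a uniform $L^q$ bound ($q>1$) on $F(u_k)$ or on the convolution there is no reason this vanishes with $r$ uniformly in $k$. The semigroup property only rewrites the integral; it does not supply the missing integrability.

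The paper avoids this entirely by never seeking strong convergence of the factors. Instead it tests $\langle E'(u_k),\cdot\rangle=o(1)$ against $\varphi/(1+u_k)$ (with $\varphi\in C_c^\infty(\Om)$, $\varphi\equiv 1$ on $\Om'\Subset\Om$) to obtain $\int_{\Om'}(|x|^{-\mu}\ast F(u_k))\frac{f(u_k)}{1+u_k}\le C$; combined with the second estimate in \eqref{wk-sol10} this yields $\sup_k\int_{\Om'}(|x|^{-\mu}\ast F(u_k))f(u_k)<\infty$, i.e.\ $L^1_{\mathrm{loc}}$ boundedness of the full product. Passing to a subsequence, $w_k:=(|x|^{-\mu}\ast F(u_k))f(u_k)\rightharpoonup w$ in the weak-$\ast$ sense of Radon measures. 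The Palais--Smale relation together with Lemma~\ref{wk-sol} then forces $w$ to be absolutely continuous with respect to Lebesgue measure, so $w=g\,dx$ with $g\in L^1_{\mathrm{loc}}$, and pointwise a.e.\ convergence identifies $g=(|x|^{-\mu}\ast F(u_0))f(u_0)$. Possible concentration of $w_k$ is ruled out only \emph{in the limit measure}, via the equation; no equi-integrability on shrinking balls is ever required.
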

\begin{proof}
If $\{u_k\}$ is a Palais Smale sequence at $l^*$ for $E$ then it must satisfy \eqref{kc-PS-bdd1} and \eqref{kc-PS-bdd2}. We remark that $E(u^+)\leq E(u)$ for each $u \in W^{1,n}_0(\Om)$, then we can assume $u_k \geq 0$ for each $k \in \mb N$. From Lemma \ref{kc-PS-bdd} we know that $\{u_k\}$ must be bounded in $W^{1,n}_0(\Om)$ so there exists a $C_0>0$ such that $\|u_k\|\leq C_0$. Also there exists a $u_0 \in W^{1,n}_0(\Om)$ such that up to a subsequence $u_k \rightharpoonup u_0$ in $W^{1,n}_0(\Om)$, strongly in $L^{q}(\Om)$ for all $q \in [1,\infty)$ and pointwise a.e. in $\Om$ as $k \to \infty$.  Let $\Om' \subset\subset \Om$ and $\varphi \in C_c^\infty(\Om)$ such that $0\leq \varphi \leq 1$ and $\varphi \equiv 1$ in $\Om' $. With easy computations, we get that
\begin{equation*}
\begin{split}
\left\| \frac{\varphi}{1+u_k}\right\|^n &= \int_\Om \left|\frac{\nabla \varphi}{1+u_k}- \varphi \frac{\nabla u_k}{(1+u_k)^2} \right|^n~dx\\
 &\leq 2^{n-1}(\|\varphi\|^n+ \|u_k\|^n).
\end{split}
\end{equation*}
This implies that $\frac{\varphi}{1+u_k} \in W^{1,n}_0(\Om)$. So using $\frac{\varphi}{1+u_k}$ as a test function \eqref{kc-PS-bdd1}, we get the following estimate
\begin{equation*}
\begin{split}
&\int_{\Om^{'}}\left( \int_\Om \frac{F(y,u_k)}{|x-y|^\mu}dy\right)\frac{f(x,u_k)}{1+u_k}~dx \leq \int_\Om \left( \int_\Om \frac{F(y,u_k)}{|x-y|^\mu}dy\right)\frac{f(x,u_k)\varphi}{1+u_k}~dx\\
&\leq  \e_k \left\|\frac{\varphi}{1+u_k}\right\| + \int_\Om m(\|u_k\|^n) |\nabla u_k|^{n-2}\nabla u_k \nabla \left( \frac{\varphi}{1+u_k}\right)~dx\\
& \leq \e_k 2^{\frac{n-1}{n}}(\|\varphi\|+ \|u_k\|) + m(\|u_k\|^n)\int_\Om |\nabla u_k|^{n-2}\nabla u_k \left(\frac{\nabla \varphi}{1+u_k}-\varphi\frac{\nabla u_k}{(1+u_k)^2}\right)~dx\\
& \leq \e_k 2^{\frac{n-1}{n}}(\|\varphi\|+ \|u_k\|) + m(\|u_k\|^n)\int_\Om |\nabla u_k|^{n-1} \left( |\nabla \varphi|+ |\nabla u_k|\right)~dx\\
& \leq \e_k 2^{\frac{n-1}{n}}(\|\varphi\|+ \|u_k\|)+ m(\|u_k\|^n)[\|\varphi\|\|u_k\|^{n-1}+ \|u_k\|^n].
\end{split}
\end{equation*}
But using $\|u_k\|\leq C_0$ for all $k$ and (m2), we infer that there must exists a $C_1>0$ such that
\begin{equation}\label{kc-ws-new1}
\int_{\Om^{'}} \left(\int_\Om  \frac{F(y,u_k)}{|x-y|^\mu}dy\right)\frac{f(x,u_k)}{1+u_k}~dx  \leq C_1.
\end{equation}
Also for the same reason, \eqref{kc-PS-bdd2} gives that
\begin{equation}\label{kc-ws-new2}
\int_{\Om^{'}} \left( \int_\Om \frac{F(y,u_k)}{|x-y|^\mu}dy\right){f(x,u_k)}{u_k}~dx  \leq C_2
\end{equation}
for some $C_2>0$. Gathering \eqref{kc-ws-new1} and \eqref{kc-ws-new2} we obtain
\begin{equation*}
\begin{split}
&\int_{\Om^{'}}\left( \int_\Om \frac{F(y,u_k)}{|x-y|^\mu}dy\right){f(x,u_k)}~dx \\
& \leq  2\int_{\Om^{'}\cap \{u_k <1\}} \left( \int_\Om\frac{F(y,u_k)}{|x-y|^\mu}dy\right)\frac{f(x,u_k)}{1+u_k}~dx + \int_{\Om^{'}\cap \{u_k \geq 1\}} \left(\int_\Om \frac{F(y,u_k)}{|x-y|^\mu}dy\right)u_k{f(x,u_k)}~dx\\
& \leq 2\int_{\Om^{'}} \left( \int_\Om\frac{F(y,u_k)}{|x-y|^\mu}dy\right)\frac{f(x,u_k)}{1+u_k}~dx + \int_{\Om^{'}}\left( \int_\Om \frac{F(y,u_k)}{|x-y|^\mu}dy\right)u_k{f(x,u_k)}~dx\\
& \leq 2C_1+C_2  :=C_3.
\end{split}
\end{equation*}
Thus the sequence $\{w_k\}:=\left\{\left( \int_\Om\frac{F(y,u_k)}{|x-y|^\mu}dy\right){f(x,u_k)}\right\}$ is bounded in $L^1_{\text{loc}}(\Om)$ which implies that up to a subsequence, $w_k \rightharpoonup w$ in the ${weak}^*$-topology as $k \to \infty$, where $w$ denotes a Radon measure. So for any $\phi \in C_c^\infty(\Om)$ we get
\[\lim_{k \to \infty}\int_\Om\int_\Om \left( \frac{F(y,u_k)}{|x-y|^\mu}dy\right){f(x,u_k)}\phi ~dx = \int_\Om \phi ~dw,\; \forall \phi \in C_c^\infty(\Om). \]
Since $u_k$ satisfies \eqref{kc-PS-bdd1}, we get that
\[\int_E \phi dw= \lim_{k \to \infty} m(\|u_k\|)\int_E |\nabla u_k|^{n-2}\nabla u_k \nabla \phi ~dx, \;\;\forall E\subset \Om.  \]
Together with Lemma~\ref{wk-sol}, this implies that $w$ is absolutely continuous with respect to the Lebesgue measure. Thus, Radon-Nikodym theorem asserts that there exists a function $g \in L^1_{\text{loc}}(\Om)$ such that for any $\phi\in C^\infty_c(\Omega)$, $\int_\Om \phi~ dw= \int_\Om \phi g~dx$.
Therefore for any $\phi\in C^\infty_c(\Omega)$ we get
\[\lim_{k \to \infty}\int_\Om\left( \int_\Om \frac{F(y,u_k)}{|x-y|^\mu}dy\right){f(x,u_k)}\phi~ ~dx = \int_\Om \phi g~dx = \int_\Om  \left( \int_\Om \frac{F(y,u_0)}{|x-y|^\mu}dy\right){f(x,u_0)}\phi~ ~dx \]
which completes the proof.\hfill{\QED}
\end{proof}

In the next Lemma, we show that weak limit of any $(PS)_c$ sequence is a weak solution of $(KC)$.

\begin{Lemma}\label{PS-ws}
Let $\{u_k\}\subset W^{1,n}_0(\Om)$ be a Palais Smale sequence of $E$. Then there exists a $u \in W^{1,n}_0(\Om)$ such that, up to a subsequence, $u_k \rightharpoonup u$ weakly in $W^{1,n}_0(\Om)$ and
\begin{equation}\label{PS-wk0}
\left(\int_\Om \frac{F(y,u_k)}{|x-y|^{\mu}}dy\right)F(x,u_k)  \to \left(\int_\Om \frac{F(y,u)}{|x-y|^{\mu}}dy\right)F(x,u) \; \text{in}\; L^1(\Om)
\end{equation}
as $k \to \infty$. Moreover, $u$ forms a weak solution of $(KC)$.
\end{Lemma}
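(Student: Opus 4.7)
The plan is to combine the boundedness of the Palais Smale sequence (Lemma~\ref{kc-PS-bdd}) with the convergence results of Lemmas~\ref{wk-sol} and~\ref{kc-ws}, and then to close the argument by upgrading weak convergence in $W^{1,n}_0(\Om)$ to strong convergence. First I would extract the weak limit: boundedness of $\{u_k\}$ in $W^{1,n}_0(\Om)$, together with reflexivity and the compact embedding $W^{1,n}_0(\Om) \hookrightarrow L^q(\Om)$ for every $q \in [1,\infty)$, yields a subsequence (still denoted $\{u_k\}$) and $u \in W^{1,n}_0(\Om)$ with $u_k \rightharpoonup u$ weakly in $W^{1,n}_0(\Om)$, $u_k \to u$ strongly in each $L^q(\Om)$, and $u_k \to u$ pointwise a.e.\ in $\Om$.

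Next, for the $L^1$ convergence \eqref{PS-wk0}, I would decompose the difference as
\begin{align*}
& \left(\int_\Om \tfrac{F(y,u_k)}{|x-y|^\mu}dy\right)F(x,u_k) - \left(\int_\Om \tfrac{F(y,u)}{|x-y|^\mu}dy\right)F(x,u) \\
& \quad = \left(\int_\Om \tfrac{F(y,u_k)}{|x-y|^\mu}dy\right)\bigl(F(x,u_k)-F(x,u)\bigr) + \left(\int_\Om \tfrac{F(y,u_k)-F(y,u)}{|x-y|^\mu}dy\right)F(x,u),
\end{align*}
control $\int_\Om F(y,u_k)/|x-y|^\mu\,dy$ in $L^\infty(\Om)$ uniformly in $k$ via \eqref{wk-sol7}, and bound the remaining piece through Hardy-Littlewood-Sobolev and $\|F(\cdot,u_k)-F(\cdot,u)\|_{L^{2n/(2n-\mu)}(\Om)}$. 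Both pieces vanish as $k\to \infty$ once one combines pointwise convergence of $F(\cdot,u_k)$ with equi-integrability of $\{|F(\cdot,u_k)|^{2n/(2n-\mu)}\}$ (obtained from (h2), the Moser-Trudinger estimate illustrated in \eqref{wk-sol4}, and Vitali's theorem) -- the very same mechanism used to produce \eqref{3.23} inside the proof of Lemma~\ref{wk-sol}.

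Finally, to identify $u$ as a weak solution, I would pass to the limit in the Palais Smale identity tested against an arbitrary $\phi\in C_c^\infty(\Om)$: the right-hand side converges by Lemma~\ref{kc-ws}, and Lemma~\ref{wk-sol} gives convergence of the $n$-Laplacian integral $\int_\Om |\nabla u_k|^{n-2}\nabla u_k \cdot \nabla \phi\,dx \to \int_\Om |\nabla u|^{n-2}\nabla u \cdot \nabla \phi\,dx$. Extracting a further subsequence so that $\|u_k\|^n \to \tau \geq 0$, continuity of $m$ produces $m(\|u_k\|^n) \to m(\tau)$, and thus the desired identity but with $m(\tau)$ instead of $m(\|u\|^n)$. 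The hard part is then to verify $\tau = \|u\|^n$. For this I would test the Palais Smale relation against $u_k - u \in W^{1,n}_0(\Om)$: the Choquard-type right-hand side tends to zero because $\int_\Om F(y,u_k)/|x-y|^\mu\,dy$ is bounded in $L^\infty(\Om)$, $f(\cdot,u_k)$ is bounded in some $L^p(\Om)$ with $p>1$ (again through (h2) and the Moser-Trudinger argument of \eqref{wk-sol4}), and $u_k - u \to 0$ in every $L^q(\Om)$. Since $m(\|u_k\|^n) \geq m_0 > 0$ by (m1), this forces $\int_\Om |\nabla u_k|^{n-2}\nabla u_k \cdot (\nabla u_k - \nabla u)\,dx \to 0$, and the classical $(S_+)$ property of $-\Delta_n$ on bounded weakly convergent sequences upgrades weak to strong convergence $u_k \to u$ in $W^{1,n}_0(\Om)$; in particular $\tau = \|u\|^n$. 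A density argument then extends the weak formulation from $C_c^\infty(\Om)$ to all test functions in $W^{1,n}_0(\Om)$, completing the proof.
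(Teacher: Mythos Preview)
Your proposal contains a genuine gap that affects both the proof of \eqref{PS-wk0} and, more seriously, the $(S_+)$ argument you use to obtain $\tau=\|u\|^n$. In both places you rely on global integrability bounds that are simply not available for a general Palais Smale sequence in the critical growth regime: the estimate \eqref{wk-sol7} is for the fixed limit $u$, not uniformly in $k$, and the Moser--Trudinger estimate \eqref{wk-sol4} is a \emph{local} bound valid only on small balls where the gradient energy is below the threshold $\sigma$. From mere boundedness of $\|u_k\|$ one cannot conclude that $f(\cdot,u_k)$ is bounded in any $L^p(\Om)$ with $p>1$, nor that $\int_\Om F(y,u_k)|x-y|^{-\mu}\,dy$ is bounded in $L^\infty(\Om)$ uniformly in $k$, nor that $\{|F(\cdot,u_k)|^{2n/(2n-\mu)}\}$ is globally equi-integrable. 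Concentration at the points of the finite set $X_\sigma$ in Lemma~\ref{wk-sol} is precisely the obstruction. Consequently your Vitali argument for \eqref{PS-wk0} and your claim that $\int_\Om\bigl(\int_\Om F(y,u_k)|x-y|^{-\mu}\,dy\bigr)f(x,u_k)(u_k-u)\,dx\to 0$ both break down.

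The paper proceeds differently. For \eqref{PS-wk0} it exploits the structural bounds \eqref{wk-sol10}, which come directly from the Palais Smale relations \eqref{kc-PS-bdd1}--\eqref{kc-PS-bdd3}, together with assumption (h4) to obtain the tail estimates \eqref{3.25}--\eqref{3.27}; the convergence then follows by splitting into $\{|u_k|\le M\}$ and $\{|u_k|>M\}$ and using dominated convergence on the truncated part, exactly as in the derivation of \eqref{3.23}. As for $u$ being a weak solution, the paper at this stage only invokes Lemmas~\ref{wk-sol} and~\ref{kc-ws} to pass to the limit, obtaining the identity with coefficient $m(\tau^n)$; the upgrade $\tau=\|u\|^n$ is \emph{not} established in this lemma but is deferred to the proof of Theorem~\ref{kc-mt-1}, where the level condition $l^*<\frac{1}{n}M\bigl((\frac{2n-\mu}{2n}\alpha_n)^{n-1}\bigr)$ from Lemma~\ref{PS-level} and Lions' higher integrability Lemma~\ref{Lions-lem} are the essential additional ingredients. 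Your $(S_+)$ shortcut cannot replace this machinery.
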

\begin{proof}
Let $\{u_k\}\subset W^{1,n}_0(\Om)$ be a Palais Smale sequence of $E$ at level $c$.
 From Lemma \ref{kc-PS-bdd} we know that $\{u_k\}$  must be bounded in $W^{1,n}_0(\Om)$. Thus there exists a $u\in W^{1,n}_0(\Om)$ such that $u_k \rightharpoonup u$ weakly in $W^{1,n}_0(\Om)$, $u_k \to u$ pointwise a.e. in $\mb R^n$ and $u_k \to u$ strongly in $L^q(\Om)$, $q \in [1,\infty)$ as $k \to \infty$. Also from \eqref{kc-PS-bdd1}, \eqref{kc-PS-bdd2} and \eqref{kc-PS-bdd3} we get that there exists a constant $C>0$ such that \eqref{wk-sol10} holds.
Now the proof of \eqref{PS-wk0} follows similarly the proof of \eqref{3.23} (see also equation $(2.20)$ of Lemma $2.4$ in \cite{yang-JDE}). Also, from this we get $u$ forms a weak solution of $(KC)$ using Lemma \ref{kc-ws} and Lemma \ref{wk-sol}. \hfill{\QED}
\end{proof}

 Now we define the associated Nehari manifold as
 \[\mc N = \{u \in W^{1,n}_0(\Om)\setminus \{0\}:\; \langle E^\prime(u),u \rangle=0\}\]
and $l^{**} = \inf_{u\in \mc N}E(u)$.

\begin{Lemma}\label{comp-lem}
If (m3) holds then $l^{*} \leq l^{**}$.
\end{Lemma}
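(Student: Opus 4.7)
The plan is to show that for every $u \in \mathcal{N}$ the fiber map $\phi(t) := E(tu)$, $t \geq 0$, attains its maximum at $t = 1$, and then to exhibit for each such $u$ a path in $\Gamma$ along which $E$ never exceeds $E(u)$. First, differentiating $\phi$ under the integral and using the $x \leftrightarrow y$ symmetry of the Choquard kernel yields
\begin{equation*}
\phi'(t) = m(t^n \|u\|^n)\, t^{n-1} \|u\|^n - \int_\Omega \int_\Omega \frac{F(y, tu)\, f(x, tu)\, u(x)}{|x-y|^\mu}\, dx\, dy,
\end{equation*}
so that $\phi'(1) = \langle E'(u), u \rangle = 0$. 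I would then divide by $t^{2n-1}$ and rearrange; by (h1) one may assume $u \geq 0$ since $F$ and $f$ vanish on $\{u < 0\}$. This gives
\begin{equation*}
\frac{\phi'(t)}{t^{2n-1}} = \frac{m(t^n \|u\|^n)}{t^n}\, \|u\|^n - \int_\Omega \int_\Omega \frac{1}{|x-y|^\mu}\, \frac{F(y, tu(y))}{(tu(y))^n}\, \frac{f(x, tu(x))}{(tu(x))^{n-1}}\, u(y)^n u(x)^n\, dx\, dy.
\end{equation*}

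The key step is to show $t \mapsto \phi'(t)/t^{2n-1}$ is non-increasing on $(0, \infty)$. By (m3), $m(s)/s$ is non-increasing, hence $m(t^n \|u\|^n)/t^n$ is non-increasing in $t$. Next, (h3) together with the form $f(x,t) = h(x,t) \exp(|t|^{n/(n-1)})$ implies that $s \mapsto f(x,s)/s^{n-1}$ is non-decreasing on $(0, \infty)$ (write it as $(h(x,s)/s^\ell)\cdot s^{\ell - (n-1)} \exp(s^{n/(n-1)})$, a product of non-decreasing positive factors since $\ell > n-1$); integrating and comparing with $s f(x,s)$ then gives $s f(x,s) \geq n F(x,s)$, so $F(x,s)/s^n$ is non-decreasing as well. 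Therefore both factors in the integrand above are non-decreasing in $t$, the double integral is non-decreasing, and the combination shows $\phi'(t)/t^{2n-1}$ is non-increasing. Since it vanishes at $t = 1$ it is $\geq 0$ on $(0,1]$ and $\leq 0$ on $[1, \infty)$, so $\max_{t \geq 0} \phi(t) = \phi(1) = E(u)$.

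To conclude, by the asymptotic estimate for $E(tu_0)$ established in the proof of Lemma~\ref{lemma3.1} one picks $t_0 > 1$ with $E(t_0 u) < 0$, so that $\gamma(s) := s t_0 u$ defines a path in $\Gamma$. Then
\begin{equation*}
l^* \;\leq\; \max_{s \in [0,1]} E(\gamma(s)) \;=\; \max_{\tau \in [0, t_0]} E(\tau u) \;\leq\; \max_{t \geq 0} E(tu) \;=\; E(u),
\end{equation*}
and taking the infimum over $u \in \mathcal{N}$ yields $l^* \leq l^{**}$. The principal obstacle is the monotonicity claim in the key step, which relies on pairing (m3) with the correct polynomial scaling forced by (h3) (specifically $\ell > n-1$); the remainder of the argument is routine.
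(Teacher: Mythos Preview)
Your proof is correct and follows essentially the same approach as the paper: both analyze the fiber map $t\mapsto E(tu)$, use (m3) for the Kirchhoff term and the monotonicity of $f(x,s)/s^{n-1}$ and $F(x,s)/s^n$ (derived from (h3)) for the Choquard term to show the maximum occurs at $t=1$, and then build a radial path in $\Gamma$ through $u$. The only cosmetic difference is that you show $\phi'(t)/t^{2n-1}$ is non-increasing directly, whereas the paper subtracts the Nehari condition $\phi'(1)=0$ and checks the sign of each resulting difference; your formulation handles $t<1$ and $t>1$ simultaneously and is slightly cleaner.
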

\begin{proof}
Let $u \in \mc N$ and $h:(0,+\infty)\to \mb R$ be defined as $h(t)=E(tu)$. Then
\[h^\prime(t)= {m(\|tu\|^n)}\|u\|^nt^{n-1} - \int_\Om \left(\int_\Om \frac{F(y,tu)}{|x-y|^{\mu}}dy\right)f(x,tu)u~dx.  \]
Since $u$ satisfies $\langle E^\prime(u),u \rangle=0$, we get
\begin{align*}
h^\prime(t) &= \|u\|^{2n}t^{2n-1}\left( \frac{m(\|tu\|^n)}{t^n\|u\|^n}- \frac{m(\|u\|^n)}{\|u\|^n}\right)\\
&\quad + t^{2n-1}\left[\int_\Om\left( \int_\Om \frac{\frac{ F(y,u)f(x,u)}{u^{n-1}(x)}}{|x-y|^{\mu}}dy- \int_\Om \frac{\frac{F(y,tu)f(x,tu)}{(tu(x))^{n-1}t^n}}{|x-y|^{\mu}}dy\right)u^{n}(x)~dx\right].
\end{align*}
\noi
\begin{equation}\label{**}
\textbf{Claim:}\  \text{For any}\  x\in \Omega, t\to tf(x,t)-nF(x,t) \ \text{is increasing on }\R^+.\ \ \ \ \ \ \ \ \ \ \ \ \ \ \ \ \ \ \ \ \ \ \ \ \ \ \ \ \ \ \
\end{equation}
indeed, from (h3), for $0<t_1<t_2$, we have
\begin{align*}
t_1f(x,t_1)-nF(x,t_1) \leq t_1f(x,t_1)-nF(x,t_2)+\frac{f(x,t_2)}{t_2^{n-1}}(t_2^n-t_1^n)\leq t_2f(x,t_2)-nF(x,t_2).
\end{align*}
Using this we get that $tf(x,t)-nF(x,t)\geq 0$ for $t\geq 0$ which implies that $t \to \frac{F(x,tu)}{t^{n}}$ is non-decreasing for $t>0$. Therefore for $0<t<1$ and $x \in \Om$, we get $\frac{F(x,tu)}{t^n} \leq F(x,u)$ and this implies
\begin{align*}
h^\prime(t) &\geq \|u\|^{2n}t^{2n-1}\left( \frac{m(\|tu\|^n)}{\|tu\|^n}- \frac{m(\|u\|^n)}{\|u\|^n}\right)\\
&\quad+ t^{2n-1}\left[\int_\Om \left(\int_\Om\left(F(y,u)- \frac{F(y,tu)}{t^n}\right)~\frac{dy}{|x-y|^{\mu}}\right)\frac{f(x,tu)}{(tu(x))^{n-1}}u^{n}(x)~dx\right].
\end{align*}
This gives that $h^\prime(t)\geq 0$ for $0<t\leq1$ and $h^\prime(t)<0$ for $t>1$. Hence $E(u)= \max_{t\geq 0} E(tu)$. Now we define $g:[0,1] \to W^{1,n}_0(\Om)$ as $g(t)=(t_0u)t$ where $t_0>1$ is such that $E(t_0u)<0$. So $g \in \Gamma$, where $\Gamma$ is as defined in the definition of $l^*$. Therefore we obtain
\[l^* \leq \max_{t\in[0,1]}E(g(t)) \leq \max_{t\geq 0} E(tu)=E(u). \]
Since $u \in \mc N$ is arbitrary, we get $l^* \leq l^{**}$. This completes the proof.\hfill{\QED}
\end{proof}

We recall the following Lemma from \cite{Lions} which is known as the higher integrability Lemma.

\begin{Lemma}\label{Lions-lem}
Let $\{v_k \in W^{1,n}_0(\Om):\; \|v_k\|=1\}$ be a sequence in $W^{1,n}_0(\Om)$ converging weakly to a non zero $v \in W^{1,n}_0(\Om)$. Then for every $p\in \left(1, (1-\|v\|)^{-\frac{1}{n-1}}\right)$,
\[\sup_k \int_\Om \exp \left( p\alpha_n |v_k|^{\frac{n}{n-1}}\right)<+\infty. \]
\end{Lemma}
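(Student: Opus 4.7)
The plan is to reduce the claim to the Trudinger--Moser inequality (Theorem~\ref{TM-ineq}) applied separately to the weakly vanishing part $w_k := v_k - v$ and to the fixed limit $v$, after the standard decomposition $v_k = v + w_k$. The two contributions in the exponent are separated via a convexity estimate followed by H\"older's inequality.

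First, I would invoke the elementary inequality: for each $\eta>0$ there exists $C_\eta>0$ with
\[ |a+b|^{\frac{n}{n-1}} \le (1+\eta)|a|^{\frac{n}{n-1}} + C_\eta |b|^{\frac{n}{n-1}},\qquad \fa\ a,b\in\R,\]
which, applied pointwise with $a=w_k(x)$, $b=v(x)$, followed by H\"older's inequality with conjugate exponents $s,s'>1$, $s$ close to $1$, yields
\[ \int_\Om \exp\!\bigl(p\al_n|v_k|^{\frac{n}{n-1}}\bigr)dx \le \Bigl(\int_\Om \exp\!\bigl(ps(1+\eta)\al_n|w_k|^{\frac{n}{n-1}}\bigr)dx\Bigr)^{\!1/s}\!\Bigl(\int_\Om \exp\!\bigl(ps'C_\eta\al_n|v|^{\frac{n}{n-1}}\bigr)dx\Bigr)^{\!1/s'}\!.\]
Writing $|v|^{n/(n-1)}=\|v\|^{n/(n-1)}(|v|/\|v\|)^{n/(n-1)}$ and using that $\|v\|\le\liminf_k\|v_k\|=1$, Theorem~\ref{TM-ineq} applied to $v/\|v\|$ makes the second factor finite and independent of $k$, for appropriate small $\eta$.

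For the first factor, a similar rescaling $|w_k|^{n/(n-1)}=\|w_k\|^{n/(n-1)}(|w_k|/\|w_k\|)^{n/(n-1)}$ together with Theorem~\ref{TM-ineq} applied to $w_k/\|w_k\|$ provides uniform $k$-boundedness as soon as
\[ ps(1+\eta)\,\|w_k\|^{\frac{n}{n-1}}\le 1\qquad\text{for all large }k.\tag{$\star$}\]
To secure ($\star$), one needs to estimate $\limsup_k\|w_k\|$ from above. Up to extracting a subsequence, $v_k\to v$ a.e.\ by Rellich--Kondrachov. Assuming further that $\nabla v_k\to\nabla v$ a.e.\ (which is the delicate point, see below), the Brezis--Lieb lemma applied to $(\nabla v_k)$ in $L^n$ yields
\[ \|w_k\|^n = \|v_k\|^n - \|v\|^n + o(1) = 1-\|v\|^n + o(1),\]
so that $\limsup_k\|w_k\|^{n/(n-1)} = (1-\|v\|^n)^{1/(n-1)}$. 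Since $1-\|v\|^n\ge 1-\|v\|$ for $\|v\|\in[0,1]$ is false (one has instead $1-\|v\|^n\ge 1-\|v\|$), the hypothesis $p<(1-\|v\|)^{-1/(n-1)}$ combined with the freedom to take $s$ close to $1$ and $\eta$ close to $0$ would guarantee ($\star$) once the correct Brezis--Lieb identity is used.

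The main obstacle is precisely the a.e.\ convergence of the gradients, which is not automatic from weak $W^{1,n}_0(\Om)$ convergence of $v_k$ alone. The original argument of P.--L.~Lions in \cite{Lions} bypasses this by a direct concentration--compactness analysis on the sequence $(v_k)$ itself, exploiting the uniform convexity of $W^{1,n}_0(\Om)$ to control the mass of $w_k$ without going through a.e.\ convergence of gradients. Since the lemma is used in our paper only as an established compactness tool, we refer to \cite{Lions} for the full details and merely sketch the structure of the argument here.
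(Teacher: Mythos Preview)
The paper does not supply its own proof of this lemma; it is simply recalled from \cite{Lions} and used as a black box. So there is no line-by-line comparison to make. Your sketch follows the standard route (split $v_k=v+w_k$, use the elementary convexity inequality on $|a+b|^{n/(n-1)}$, then H\"older, then Trudinger--Moser on each factor), and ends---as the paper itself does---by deferring to \cite{Lions} for the delicate point.

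Two comments on the content of your sketch. First, the lemma as printed in the paper carries a typo: the admissible range should be $p\in\bigl(1,(1-\|v\|^n)^{-1/(n-1)}\bigr)$, with $\|v\|^n$ in place of $\|v\|$. This is how the result is actually invoked later in the proof of Theorem~\ref{kc-mt-1} (see \eqref{kc-mt-13} and the displayed lines just after, where $1-\|v_0\|^n$ appears). Your parenthetical ``Since $1-\|v\|^n\ge 1-\|v\|$ for $\|v\|\in[0,1]$ is false (one has instead $1-\|v\|^n\ge 1-\|v\|$)'' repeats the same inequality twice and is self-contradictory as written; the correct observation is that for $\|v\|\in(0,1)$ one has $1-\|v\|^n\ge 1-\|v\|$, whence $(1-\|v\|^n)^{-1/(n-1)}\le(1-\|v\|)^{-1/(n-1)}$, so the printed range is strictly larger than what the argument actually yields---confirming the typo rather than your argument being too weak.

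Second, you correctly flag that Brezis--Lieb for the gradients requires $\na v_k\to\na v$ a.e., which does not follow from weak $W^{1,n}_0$ convergence alone. In the paper's sole application of the lemma (proof of Theorem~\ref{kc-mt-1}), this is supplied externally: the sequence $v_k=u_k/\|u_k\|$ arises from a Palais--Smale sequence for which Lemma~\ref{wk-sol} (via the classical argument in \cite{M-do1}) already gives a.e.\ convergence of gradients, so $\|v_k-v_0\|^n\to 1-\|v_0\|^n$ is legitimate there. Your decision to cite \cite{Lions} for the general case is thus entirely in keeping with the paper's own practice.
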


\noi \textbf{Proof of Theorem \ref{kc-mt-1}:} Let $\{u_k\}$ denotes a Palais Smale sequence at the level $l^*$. Then $(u_k)_{k\in\N}$ can be obtained as a minimizing sequence associated to the variational problem \eqref{*}. Then by Lemma \ref{PS-ws} we know that there exists a $u_0 \in W^{1,n}_0(\Om)$ such that up ta a subsequence
$u_k \rightharpoonup u_0$ weakly in $W^{1,n}_0(\Om)$ as $k \to \infty$. 
So if $u_0 \equiv 0$ then using Lemma \ref{PS-ws}, we infer that
\[\int_\Om \left(\int_\Om \frac{ F(y,u_k)}{|x-y|^{\mu}}dy\right)F(x,u_k) ~dx  \to 0\; \text{as}\; k \to \infty.\]
This gives that $ \lim_{k \to \infty}E(u_k) = \frac{1}{n}\lim_{k\to\infty} M(\|u_k\|^n) = l^*$ which implies in the light of Lemma \ref{PS-level} that for large enough $k$
\[M(\|u_k\|^n) < M\left(\left( \frac{2n-\mu}{2n}\alpha_n\right)^{n-1}\right).\] Therefore since $M$ is non decreasing, we get
\[\frac{2n}{2n-\mu}\|u_k\|^{\frac{n}{n-1}} <\alpha_{n}.  \]
Now, this implies that $\sup_{k} \int_{\Omega} f(x,u_k)^q ~dx < +\infty$ for some $q >\frac{2n}{2n-\mu}$ and along with Proposition \ref{HLS}, Theorem \ref{TM-ineq} and the Vitali's convergence theorem,
\[\int_{\Om}\left( \int_\Om\frac{F(y,u_k)}{|x-y|^{\mu}}dy\right)f(x,u_k)u_k~dx \to 0 \;\text{as}\; k \to \infty.\]
Hence $\lim_{k\to \infty}\langle E^\prime(u_k),u_k \rangle=0$ gives $\lim_{k\to \infty}m(\|u_k\|^n)\|u_k\|^n=0$. From (m1) we then obtain $\lim_{k \to \infty} \|u_k\|^n =0$. Thus using Lemma \ref{PS-ws}, it must be that $\lim_{k \to \infty}E(u_k)=0 =l^*$ which contradicts $l^*>0$. Thus $u_0 \not \equiv 0$.\\
\textbf{Claim (1):} $u_0$ is a weak solution of $(KC)$.\\
Before proving this, we show that $u_0>0$ in $\Om$. From Lemma \ref{kc-PS-bdd} we know that $\{u_k\}$ must be bounded. Therefore there exists a constant $\tau >0$ such that up to a subsequence $\|u_k\| \to \tau$ as $k \to \infty$. Since $E^\prime(u_k) \to 0$, again up to a subsequence $|\nabla u_k|^{n-2}\nabla u_k \rightharpoonup |\nabla u_0|^{n-2}\nabla u_0$ weakly in $(L^{\frac{n}{n-1}}(\Om))^n$. Furthermore, by Lemma~\ref{wk-sol} and by Lemma~\ref{kc-ws}
 $$\int_\Om \left(\int_\Om \frac{F(y,u_k)}{|x-y|^{\mu}}dy\right)f(x,u_k) \varphi ~dx \to \int_\Om \left(\int_\Om \frac{F(y,u_0)}{|x-y|^{\mu}}dy\right)f(x,u_0)\varphi~dx$$ and
 \begin{align*}
 m(\tau^n)\int_\Om |\nabla u_0|^{n-2}\nabla u_0\nabla \varphi~dx = \int_\Om \left(\int_\Om \frac{F(y,u_0)}{|x-y|^{\mu}}dy\right)f(x,u_0)\varphi~dx, \; \text{for all}\; \varphi \in W^{1,n}_0(\Om)
 \end{align*}
as $k \to \infty$. In particular, taking $\varphi = u_0^-$ in the above equation we get $m(\tau^n)\|u_0^-\|=0$ which implies together with assumption (m1) that $u_0^-=0$ a.e. in $\Om$. Therefore $u_0\geq 0$ a.e. in $\Om$.\\
  From Theorem \ref{TM-ineq}, we have $f(\cdot,u_0)\in L^q(\Om)$ for $1\leq q<\infty$.
  Also as in \eqref{wk-sol7}, we can similarly get that $\int_\Om \frac{F(y,u_0)}{|x-y|^{\mu}}~dy \in L^\infty(\Om)$. Hence $\left(\int_\Om \frac{F(y,u_0)}{|x-y|^{\mu}}~dy \right)f(x,u_0) \in L^q(\Om)$ for $1 \leq q <\infty$. By elliptic regularity results, we finally get that $u_0 \in L^\infty(\Om)$ and $u_0 \in C^{1,\gamma}(\overline{\Om})$ for some $\gamma \in (0,1)$.  Therefore, $u_0>0$ in $\Om$ follows from the strong maximum principle and $u_0\not\equiv 0$.\\
 Now we claim that
 \begin{equation}\label{kc-mt-11}
 m(\|u_0\|^n)\|u_0\|^n \geq \int_\Om \left(\int_\Om \frac{F(y,u_0)}{|x-y|^{\mu}}~dy \right)f(x,u_0)u_0~dx.
 \end{equation}
Arguing by contradiction, suppose that
\[ m(\|u_0\|^n)\|u_0\|^n <\int_\Om \left(\int_\Om \frac{F(y,u_0)}{|x-y|^{\mu}}~dy \right)f(x,u_0)u_0~dx\]
which implies that $\langle E^\prime(u_0),u_0 \rangle <0$. For $t>0$, using \eqref{**} we have that
\begin{align*}
\langle E^\prime(t u_0),u_0 \rangle  &\geq m(t^n\|u_0\|^n)t^{n-1}\|u_0\|^n -  \frac{1}{n}\int_\Om  \left(\int_\Om \frac{f(y,tu_0)tu_0(y)}{|x-y|^{\mu}}~dy \right)f(x,tu_0)u_0~dx\\
 &\geq m_0t^{n-1}\|u_0\|^n - \frac{1}{n}\int_\Om \left(\int_\Om \frac{f(y,tu_0)tu_0(y)}{|x-y|^{\mu}}~dy \right)f(x,tu_0)u_0~dx.
\end{align*}
Since (h3) gives that
\[\lim_{t\to 0^+} \frac{f(x,t)}{t^\gamma}=0 \; \text{uniformly in}\; x\in \Om,\; \text{for all}\; \gamma \in [0,n-1], \]
we can choose $t>0$ sufficiently small so that $\langle E^\prime(tu_0),u_0 \rangle>0$. Thus there exists a $t_*\in (0,1)$ such that $\langle E^\prime(t_*u_0),u_0 \rangle=0$ that is $t_*u_0 \in \mc N$. So using Lemma \ref{comp-lem}, $(m3)^\prime$ and \eqref{**} we get
\begin{align*}
l^* \leq l^{**} &\leq E(t_*u_0) = E(t_*u_0) - \frac{1}{2n}\langle E^\prime(t_*u_0),u_0 \rangle\\
& = \frac{M(\|t_*u_0\|^n)}{n} -\frac12 \int_\Om \left(\int_\Om \frac{ F(y,t_*u_0)}{|x-y|^{\mu}}dy\right)F(x,t_*u_0)~dx -\frac{1}{2n}m(\|t_*u_0\|^n)\|t_*u_0\|^n\\
 & \quad+ \frac{1}{2n}\int_\Om \left(\int_\Om \frac{F(y,t_*u_0)}{|x-y|^{\mu}}dy\right)f(x,t_*u_0)t_*u_0~dx\\
& < \frac{M(\|u_0\|^n)}{n}  -\frac{1}{2n}m(\|u_0\|^n)\|u_0\|^n \\
& \quad + \frac{1}{2n}\int_\Om \left(\int_\Om \frac{ F(y,t_*u_0)}{|x-y|^{\mu}}dy\right)(f(x,t_*u_0)t_*u_0 -nF(x,t_*u_0))~dx\\
& \leq  \frac{M(\|u_0\|^n)}{n}  -\frac{1}{2n}m(\|u_0\|^n)\|u_0\|^n  + \frac{1}{2n}\int_\Om \left(\int_\Om \frac{ F(y,u_0)}{|x-y|^{\mu}}dy\right)(f(x,u_0)u_0 -nF(x,u_0))\\
& \leq \liminf_{k \to \infty} \frac{M(\|u_k\|^n)}{n}  -\frac{1}{2n}m(\|u_k\|^n)\|u_k\|^n \\
& + \frac{1}{2n}\int_\Om \left(\int_\Om \frac{ F(y,u_k)}{|x-y|^{\mu}}dy\right)(f(x,u_k)u_k -nF(x,u_k))~dx\\
& = \liminf_{k \to \infty} \left( E(u_k) - \frac{1}{2n}\langle E^\prime(u_k),u_k \rangle\right) = l^*.
\end{align*}
This gives a contradiction, that is \eqref{kc-mt-11} holds true. \\
\textbf{Claim (2):} $E(u_0)= l^*$.\\
From Lemma \ref{PS-ws} we know that
\[\int_\Om \left(\int_\Om \frac{ F(y,u_k)}{|x-y|^{\mu}}dy\right)F(x,u_k)~dx \to \int_\Om \left(\int_\Om \frac{ F(y,u_0)}{|x-y|^{\mu}}dy\right)F(x,u_0)~dx. \]
Using this and the weakly lower semicontinuity of norms in $\lim_{k \to \infty}E(u_k)= l^*$, we obtain $E(u_0) \leq l^*$. If $E(u_0)< l^*$ then it must be
\[\lim_{k \to \infty} M(\|u_k\|^n) > M(\|u_0\|^n)\]
which implies that $\lim_{k \to \infty}\|u_k\|^n > \|u_0\|^n$, since $M$ is continuous and increasing. From this we get
\[\tau^n > \|u_0\|^n.\]
Moreover we get
\begin{equation}\label{kc-mt-12}
M(\tau^n) = n \left( l^* + \frac12 \int_\Om \left(\int_\Om \frac{ F(y,u_0)}{|x-y|^{\mu}}dy\right)F(x,u_0)~dx\right).
\end{equation}
Now we define the sequence $v_k = \frac{u_k}{\|u_k\|}$ and $v_0 = \frac{u_0}{\tau}$ then $v_k \rightharpoonup v_0$ weakly in $W^{1,n}_0(\Om)$ and $\|v_0\|<1$. From Lemma \ref{Lions-lem} we have that
\begin{equation}\label{kc-mt-13}
\sup_{ k \in \mb N} \int_\Om \exp \left( p|v_k|^{\frac{n}{n-1}}\right)<+\infty,\; \text{for} \; 1<p<\frac{\alpha_n}{(1-\|v_0\|)^{\frac{1}{n-1}}}.
\end{equation}
Also from $(m3)^\prime$, Claim (1) and Lemma \ref{comp-lem} we obtain
\[E(u_0)= \frac{M(\|u_0\|^n)}{n}- \frac{m(\|u_0\|^n)\|u_0\|^n}{2n}+ \frac{1}{2n}\int_\Om \left(\int_\Om \frac{ F(y,u_0)}{|x-y|^{\mu}}dy\right)(f(x,u_0)u_0-nF(x,u_0))~dx\geq 0.\]
Using this with \eqref{kc-mt-12} we get that
\begin{align*}
M(\tau^n) = nl^* - n E(u_0) +M(\|u_0\|^n) < M\left(\left( \frac{2n-\mu}{2n}\alpha_n\right)^{n-1}\right) + M(\|u_0\|^n)
\end{align*}
which implies together with (m1) that
\begin{equation*}
\tau^n < \frac{\alpha_n^{n-1}\left( \frac{2n-\mu}{2n}\right)^{n-1}}{1-\|v_0\|^n}.
\end{equation*}
Thus it is possible to find a $\tau_* > 0$ such that for $k\in \mb N$ large enough
\[\|u_k\|^{\frac{n}{n-1}} < \tau_* < \frac{\alpha_n\left( \frac{2n-\mu}{2n}\right)}{(1-\|v_0\|^n)^{\frac{1}{n-1}}}.\]
Then we choose a $q>1$ but close to $1$ such that
\[\frac{2n}{2n-\mu} q \|u_k\|^{\frac{n}{n-1}} \leq \frac{2n}{2n-\mu}\tau_* < \frac{\alpha_{n}}{(1-\|v_0\|^n)^{\frac{1}{n-1}}}.
 \]
Therefore from \eqref{kc-mt-13} we conclude that
\begin{equation}\label{kc-mt-15}
 \int_\Om \exp\left(\frac{2n}{2n-\mu}q|u_k|^{\frac{n}{n-1}}\right) \leq C
 \end{equation}
for some constant $C>0$. Using \eqref{kc-mt-15}
\[\int_\Om \left(\int_\Om \frac{ F(y,u_k)}{|x-y|^{\mu}}dy\right)f(x,u_k)u_k~dx \to \int_\Om \left(\int_\Om \frac{ F(y,u_0)}{|x-y|^{\mu}}dy\right)f(x,u_0)u_0~dx. \]
We conclude that $\|u_k\| \to \|u_0\|$ and we get a contradiction and claim (2) is proved. Now, from Claims (1) and (2), the proof of Theorem~\ref{kc-mt-1} follows.
 \hfill{\QED}
\section{The Nehari Manifold method for $(\mathcal{P}_{\la, M})$}
The energy functional $\mathcal{J}_{\la, M} : W_0^{1,n}(\Omega) \longrightarrow \mathbb{R}$ associated to the problem $\mathcal{P}_{\la, M}$ is defined as
\begin{align*}
\mathcal{J}_{\la, M}(u)= \dfrac{1}{n} M(\|u\|^n) - \dfrac{\la}{q+1} \int_{\Omega} h(x) |u|^{q+1}~dx - \dfrac{1}{2} \int_{\Omega} (|x|^{-\mu}* F(u)) F(u)~dx
\end{align*}
where $F$ and $M$ are primitive of $f$ and $m$ respectively and $f(s)= s|s|^p exp(|s|^{\beta}).$
\begin{Definition}
A function $u \in W_0^{1,n}(\Omega)$ is said to be weak solution of \ $\mathcal{P}_{\la, M}$ if \ $\forall \ \phi \in W_0^{1,n}(\Omega)$ we have
\begin{align*}
m(\|u\|^n)\int_{\Omega} |\nabla u|^{n-2} \nabla u. \nabla \phi ~dx= \la \int_{\Omega} h(x) u^{q-1} u \phi ~dx + \int_{\Omega} (|x|^{-\mu}* F(u)) f(u) \phi ~dx.
\end{align*}
\end{Definition}
We observe that $\mathcal{J}_{\la, M}$ is only bounded below on suitable subsets of $W_0^{1,n}(\Omega)$. In order to prove the existence of weak solutions to  $(\mathcal{P}_{\la, M})$, we establish the existence of minimizers of $\mathcal{J}_{\la, M}$ under the natural constraint of the Nehari Manifold:
\begin{equation*}
N_{\la, M} := \{u \in W_0^{1,n}(\Omega)|\  \langle \mathcal{J}^{'}_{\la,M} (u),u \rangle =0 \}
\end{equation*}
where $\langle . \ , . \rangle$ denotes the duality between $W_0^{1,n}(\Omega)$ and $W^{-1,n}(\Omega).$ Therefore, $u \in N_{\la, M}$ if and only if
$$ \|u\|^n \  m(\|u\|^n)- \la \int_{\Omega} h(x) u^{q+1} ~dx - \int_{\Omega} (|x|^{-\mu}* F(u)) f(u) u ~dx =0.$$
\begin{Remark}
We noticed that $N_{\la, M}$ contains every solution of $(\mathcal{P}_{\la,M}).$
\end{Remark}
For $u \in W_0^{1,n}(\Omega)$, we define the fiber map $\Phi_{u,M} : \mathbb{R}^{+} \rightarrow \mathbb{R}$ as
\begin{equation*}
\Phi_{u,M} (t)= \mathcal{J}_{\la, M}(tu)= \frac{M(\|tu\|^n)}{n}  - \frac{\la}{q+1} \int_{\Omega} h(x) |tu|^{q+1} ~dx - \frac{1}{2} \int_{\Omega} (|x|^{-\mu}*F(tu)) F(tu) ~dx,
\end{equation*}
\begin{equation*}
\Phi^{'}_{u,M} (t) = t^{n-1} \|u\|^n m(\|tu\|^n) - \la t^{q} \int_{\Omega} h(x) |u|^{q+1}~dx - \int_{\Omega} (|x|^{-\mu}* F(tu)) f(tu) u ~dx
\end{equation*}
and
\begin{align*}
      \Phi^{''}_{u,M} (t)&= n t^{2n-2} \|u\|^{2n} m'(\|tu\|^n)+ (n-1) t^{n-2} \|u\|^n m(\|tu\|
      ^n) - \la q t^{q-1} \int_{\Omega} h(x) |u|^{q+1}~dx \\
      &- \int_{\Omega} (|x|^{-\mu} *f(tu).u)f(tu)u ~dx - \int_{\Omega} (|x|^{-\mu}* F(tu))f'(tu) u^2 ~dx.
 \end{align*}
 The Nehari Manifold is closely related to the the maps $\Phi_{u, M}$ by the relation $tu \in N_{\la, M}$ iff $\Phi_{u,M}^{'}(t)=0$. In particular, $ u \in N_{\la,M}$ iff $\Phi_{u,M}^{'}(1)=0$. So we study the geometry of the energy functional on the following components of the Nehari Manifold:
\begin{equation*}
N^+_{\la, M}:= \{u \in N_{\la,M} : \Phi_{u,M}^{''}(1)>0\} =\{tu \in W_0^{1,n}(\Omega) : \Phi_{u,M}^{'}(t)=0, \Phi_{u,M}^{''}(t)>0\},
\end{equation*}
\begin{equation*}
N^-_{\la, M}:= \{u \in N_{\la,M} : \Phi_{u,M}^{''}(1)<0\} =\{tu \in W_0^{1,n}(\Omega) : \Phi_{u,M}^{'}(t)=0, \Phi_{u,M}^{''}(t)<0\},
\end{equation*}
\begin{equation*}
N^0_{\la, M}:= \{u \in N_{\la,M} : \Phi_{u,M}^{''}(1)=0\} =\{tu \in W_0^{1,n}(\Omega) : \Phi_{u,M}^{'}(t)=0, \Phi_{u,M}^{''}(t)=0\}.
\end{equation*}
We also define $H(u)=\int_{\Omega} h|u|^{q+1}~dx$ and study the behaviour of fibering maps $\Phi_{u,M}$ according to the sign of $H(u).$ Let
\begin{equation*}
H^{+}:= \{u \in W_0^{1,n}(\Omega) : H(u) >0 \},
\end{equation*}
\begin{equation*}
H^{-}_0:= \{u \in W_0^{1,n}(\Omega) : H(u) \leq 0 \}.
\end{equation*}
\subsection{Analysis of Fiber Maps}
Here we analyze accurately the geometry of the energy functional on the Nehari manifold. We split the study in different cases.\\
Case 1: $ u \in H^-_0$\\
Define $\psi : \mathbb{R}^+ \rightarrow \mathbb{R}$ such that
\begin{equation*}
\psi_u(t)= t^{n-1-q} m(\|tu\|^n) \|u\|^n - t^{-q} \int_{\Omega} (|x|^{-\mu}* F(tu)) f(tu)u ~dx.
\end{equation*}
Since
\begin{align*}
\Phi^{'}_{u,M}(t)&= t^{n-1} \|u\|^n m(\|tu\|^n) - \la t^q \int_{\Omega} h(x) |u|^{q+1} ~dx - \int_{\Omega} (|x|^{-\mu}* F(tu)) f(tu)u ~dx\\
&= t^q (\psi_u(t) - \la \int_{\Omega} h(x) |u|^{q+1}~dx),
\end{align*}
$tu \in N_{\la,M}$ iff $t>0$ is a solution of $ \psi_u(t)= \la \int_{\Omega} h(x) |u|^{q+1}.$
\begin{equation}\label{KC21}
\begin{split}
 \psi^{'}_u(t)&= (n-1-q) t^{n-2-q} m(\|tu\|^n) \|u\|^n + n  t^{2n-2-q} m{'}(\|tu\|^n) \|u\|^{2n}\\
 &+ \frac{q}{t^{q+1}} \int_{\Omega} (|x|^{-\mu}* F(tu)) f(tu).u ~dx - t^{-q} \bigg[ \int_{\Omega} (|x|^{-\mu}* f(tu).u) f(tu).u ~dx \\
 &+ \int_{\Omega} (|x|^{-\mu}* F(tu)) f^{'}(tu).u^2 ~dx \bigg].
\end{split}
\end{equation}
Due to the exponential growth of $f$, for large $t$ we have $\psi_u^{'}(t) <0$ and since $u \in H^-_0$, there exists $t^*>0$ such that $\psi_u(t^*)= \la \int_{\Omega} h(x) |u|^{q+1}$, {\it i.e.} $t^*u \in N_{\la, M}$.\\
 If there exists an another point $t_1$ such that $t^*< t_1$ and $\psi_u(t_1)= \la \int_{\Omega} h(x) |u|^{q+1} \leq 0$, {\it i.e.}
 \begin{equation}\label{role}
 t_1^{n-1-q} (a t_1^n \|u\|^n +b) \|u\|^n \leq t_1^{-q} \int_{\Omega} (|x|^{-\mu}*F(t_1 u)) f(t_1 u)u~dx
 \end{equation}
 and $\psi_u'(t_1) \geq 0$. Then by using $f'(t_1u) t_1u > (p+1) f(t_1u)$ and $p> 2n-2-q$ we obtain from \eqref{role},
  \begin{align*}
  \psi'_u(t_1)& <(2n-1-q)\left[t_1^{n-2-q} (a t_1^n \|u\|^n +b) \|u\|^n - t_1^{-q-1} \int_{\Omega} (|x|^{-\mu}*F(t_1 u)) f(t_1 u)u~dx \right] \leq 0.
  \end{align*}
Therefore $\psi_u'(t_1)<0$ which yields a contradiction. Therefore there exists a unique $t^*$ such that $\psi_u(t^*)= \la \int_{\Omega} h(x) |u|^{q+1}~dx$. Also for $0<t<t^*$, $\Phi^{'}_{u,M}(t)= t^q(\psi_u(t)-\la \int_{\Omega} h(x) |u|^{q+1} ~dx) > 0$. Consequently, $\Phi_{u,M}$ is increasing in $(0,t^*)$ and decreasing on $(t^*, \infty).$ Therefore there exists a unique critical point of $\Phi_{u, M}$ which is also a global maximum point. Furthermore, since $\psi'_u(t) = \displaystyle\frac{\left(t \phi''_{u,M}(t)-q \phi'_{u,M}(t)\right)}{t^q}$, therefore $t^* u \in N^-_{\la,M}.$\\ \\
Case 2: $ u \in H^+$\\
In this case, we establish that there exists $\la_0 >0$ and a $t_*$ such that for $\la \in (0, \la_0)$, $\Phi_{u}$ has exactly two critical points $t_1(u)$ and $t_2(u)$ such that $t_1(u) < t_*(u) <t_2(u)$ where $t_1(u)$ is local minimum point and $t_2(u)$ is local maximum point. To prove this case, we need the analysis performed in the next subsection.
\subsection{Preliminary Results for Case-2}
  For $0\not\equiv u \in H^+$, we have that $\psi_u(t) \rightarrow -\infty$ as $ t \rightarrow \infty$ and for small $t>0,$ $\psi_u(t)>0$. Then there exists at least a point of maximum of $\psi_u(t)$, say $t_*$, and $\psi_u^{'}(t_*)=0$, {\it i.e.}
 \begin{align*}
 (2n-1-q) t_*^{2n-2-q} a \|u\|^{2n} &+(n-1-q) t_*^{n-2-q} b \|u\|^n + \dfrac{q}{t_*^{q+1}} \int_{\Omega}(|x|^{-\mu}*F(t_*u))f(t_*u)u ~dx\\
 &=t_*^{-q}\bigg[\int_{\Omega}(|x|^{-\mu}*F(t_*u))f^{'}(t_*u) u^2 ~dx +\int_{\Omega} (|x|^{-\mu}*f(t_*u)u)f(t_*u).u ~dx\bigg].
\end{align*}
This implies that
  \begin{align*}
 (2n-1-q)a \|t_*u\|^{2n} &+(n-1-q) b \|t_*u\|^n + q\int_{\Omega}(|x|^{-\mu}*F(t_*u))f(t_*u)t_*u ~dx\\
 &=\int_{\Omega}(|x|^{-\mu}*F(t_*u))f^{'}(t_*u) (t_*u)^2 ~dx +\int_{\Omega} (|x|^{-\mu}*f(t_*u)t_*u)f(t_*u)t_*u ~dx.
\end{align*}
Then we have
\begin{align*}
2 \sqrt{(2n-1-q)a \|t_*u\|^{2n}b (n-1-q)\|t_*u\|^n} \leq B(t_*u)
\end{align*} from which it follows
\begin{align*}
\|t_*u\|^{3n/2} \leq \frac{B(t_*u)}{2\sqrt{(2n-1-q)(n-1-q)ab}}
\end{align*}
where $B(u)=\int_{\Omega}(|x|^{-\mu}*F(u))f^{'}(u) u^2 +\int_{\Omega} (|x|^{-\mu}*f(u)u)f(u) u ~dx$.
Using $\psi^{'}_u(t_*)=0$, we replace the value of $a\|tu\|^{2n}$ in the definition of $\psi_u(t)$ to obtain \\
\begin{equation}\label{KC1}
\psi_u(t_*)= \frac{1}{(2n-1-q) t_*^{q+1}} \bigg[ B(t_*u)- (2n-1) \int_{\Omega}(|x|^{-\mu}*F(t_*u)) f(t_*u)t_*u ~dx + nb \|t_* u\|^n \bigg].
\end{equation}
Now we prove the following result and establish the proof in various steps.
 \begin{Lemma}\label{inf}
 Let $$\Gamma := \left\{ u \in W_0^{1,n}(\Omega): \|u\|^{3n/2} \leq \frac{B(u)}{2\sqrt{(2n-1-q)(n-1-q)ab}}\right\}$$ where $B(u)=\int_{\Omega}(|x|^{-\mu}*F(u))f^{'}(u) (u)^2 +\int_{\Omega} (|x|^{-\mu}*f(u)u)f(u)u ~dx.$ Then there exists a $\la_0>0$ such that for every $ \la \in (0, \la_0)$, $\Gamma_0 >0$ holds where
\begin{equation}\label{imp}
\Gamma_0 := \inf_{u \in \Gamma\backslash \{0\} \cap H^+}\bigg[ B(u)- (2n-1) \int_{\Omega}(|x|^{-\mu}*F(u)) f(u).u ~dx + nb \|u\|^n - \la (2n-1-q) H(u) \bigg].
\end{equation}
 \end{Lemma}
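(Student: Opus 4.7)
The strategy is to bound the functional
\[\mathcal{E}_\lambda(u) := B(u) - (2n-1)\int_\Omega(|x|^{-\mu}\ast F(u))f(u)u\,dx + nb\|u\|^n - \lambda(2n-1-q)H(u)\]
from below by a function of $\|u\|$ alone, and then exploit the defining inequality of $\Gamma$ to force $\|u\|$ away from $0$. First I would use the explicit form $f(s)=s|s|^pe^{|s|^\beta}$ to compute $f'(s)s=f(s)(p+1+\beta|s|^\beta)$, hence $f'(u)u^2 \geq (p+1)f(u)u$ pointwise. Since $p+1>2n-1$ and $F(u),\,f(u)u\geq 0$, the definition of $B(u)$ gives
\[B(u)-(2n-1)\int_\Omega(|x|^{-\mu}\ast F(u))f(u)u\,dx \geq (p-2n+2)\int_\Omega (|x|^{-\mu}\ast F(u))f(u)u\,dx + \int_\Omega(|x|^{-\mu}\ast f(u)u)f(u)u\,dx \geq 0.\]
Next, H\"older's inequality with $r=\frac{n}{n-q-1}$ and conjugate $r'=\frac{n}{q+1}$ (so that $(q+1)r'=n$), combined with the Sobolev embedding $W_0^{1,n}(\Omega)\hookrightarrow L^n(\Omega)$, yields $H(u)\leq C_\ast\|h\|_r\|u\|^{q+1}$. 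Setting $C_\sharp:=(2n-1-q)C_\ast\|h\|_r$, it follows that
\[\mathcal{E}_\lambda(u) \geq nb\|u\|^n - \lambda C_\sharp \|u\|^{q+1} = \|u\|^{q+1}\bigl(nb\|u\|^{n-q-1}-\lambda C_\sharp\bigr).\]

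The heart of the argument is to show that the constraint $2\sqrt{(2n-1-q)(n-1-q)ab}\,\|u\|^{3n/2}\leq B(u)$ forces a uniform lower bound $\|u\|\geq c_0>0$ for every $u\in\Gamma\setminus\{0\}$. For this I would apply the Hardy--Littlewood--Sobolev inequality (Proposition~\ref{HLS}) to estimate
\[B(u) \leq C(n,\mu)\Bigl(\|F(u)\|_{L^{\frac{2n}{2n-\mu}}(\Omega)}\|f'(u)u^2\|_{L^{\frac{2n}{2n-\mu}}(\Omega)} + \|f(u)u\|_{L^{\frac{2n}{2n-\mu}}(\Omega)}^{2}\Bigr),\]
and then control each Lebesgue norm by a power of $\|u\|$ via Moser--Trudinger (Theorem~\ref{TM-ineq}) together with H\"older: when $\|u\|$ is small enough, the exponential factor $e^{|u|^\beta}$ (with $\beta\leq \frac{n}{n-1}$) stays uniformly bounded in every $L^q(\Omega)$, while the polynomial factors $|u|^{p+2}$ contribute the homogeneous power. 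This yields the estimate $B(u)\leq C\|u\|^{2(p+2)}$ valid on a small ball. Combining with the constraint produces $\|u\|^{2(p+2)-3n/2}\geq c_1>0$, and since $p>2n-2$ implies $2(p+2)>3n/2$, the required $c_0>0$ follows.

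Finally, set $\lambda_0:=nb\, c_0^{n-q-1}/C_\sharp$. Then for every $\lambda\in(0,\lambda_0)$ and every $u\in\Gamma\setminus\{0\}\cap H^+$,
\[\mathcal{E}_\lambda(u) \geq c_0^{q+1}\bigl(nb\,c_0^{n-q-1}-\lambda C_\sharp\bigr) > 0,\]
and so $\Gamma_0>0$. The main obstacle is the Moser--Trudinger based upper bound $B(u)\leq C\|u\|^{2(p+2)}$: one has to track the admissible exponents carefully so that, after H\"older-splitting, the remaining exponential integrand $e^{\sigma|u|^\beta}$ is uniformly in $L^1(\Omega)$ for $\|u\|$ small, leaving all the homogeneity in the polynomial part that supplies the correct power of $\|u\|$.
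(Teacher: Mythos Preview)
Your proposal is correct and follows essentially the same strategy as the paper: both establish a uniform lower bound $\|u\|\geq c_0>0$ on $\Gamma\setminus\{0\}$ by bounding $B(u)$ above via Hardy--Littlewood--Sobolev, H\"older and Moser--Trudinger by a high power of $\|u\|$ and playing this against the constraint $\|u\|^{3n/2}\lesssim B(u)$, then use $H(u)\leq l\|u\|^{q+1}$ and choose $\lambda_0$ so that $nb\|u\|^n$ dominates $\lambda(2n-1-q)H(u)$, the remaining nonlinear terms being nonnegative thanks to $p+1>2n-1$. Your write-up is slightly more streamlined than the paper's three-step version (it omits an intermediate positivity claim the paper records but does not actually need), but the ideas coincide.
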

 \begin{proof}
 Step 1:\ Claim: $\inf_{u \in \Gamma\backslash\{0\} \cap H^+} \|u\|>0.$\\
Let us suppose that it doesn't hold then there exists a sequence $\{u_k\} \subset \Gamma\backslash \{0\} \cap H^+$ such that $\|u_k\| \rightarrow 0$ and $\|u_k\|^{3n/2} \leq \frac{B(u_k)}{2\sqrt{(2n-1-q)(n-1-q)ab}}, \forall \ k$.
 Then by the Hardy-Littlewood-Sobolev inequality we have,
 \begin{align*}
 B(u_k)&=\int_{\Omega}(|x|^{-\mu}*F(u_k))f^{'}(u_k) u_k^2 ~dx +\int_{\Omega} (|x|^{-\mu}*f(u_k)u_k)f(u_k)u_k ~dx\\
 &\leq C(n, \mu) \left(\|f(u_k)u_k\|^2_{L^{2n/(2n-\mu)}(\Omega)} + \|F(u_k)\|_{L^{2n/(2n-\mu)}(\Omega)} \|f^{'}(u_k) (u_k)^2\|_{L^{2n/(2n-\mu)}(\Omega)}\right).
 \end{align*}
 Since $f(u)= u |u|^p exp(|u|^{\beta})$ and $f^{'}(u)= ((p+1)+\beta |u|^{\beta})|u|^p exp(|u|^{\beta})$
  then, we have
 \begin{align*}
 |B(u_k)|\leq C(n, \mu)\bigg(\int_{\Omega} (|u_k|^{p+2} &exp(|u_k|^{\beta}))^{\frac{2n}{2n-\mu}}~dx\bigg)^{\frac{2n-\mu}{n}} +  C(n, \mu) \left(\int_{\Omega} (F(u_k))^{\frac{2n}{2n-\mu}}~dx
 \right)^{\frac{2n-\mu}{2n}} \\
 &.\left(\int_{\Omega}(((p+1)+\beta |u_k|^{\beta})|u_k|^{p+2} exp(|u_k|^{\beta}))^{\frac{2n}{2n-\mu}}~dx\right)^{\frac{2n-\mu}{2n}}.
 \end{align*}
 Then using $F(t) \leq tf(t)$ and by H\"older's inequality, we obtain
 \begin{align*}
 |B(&u_k)|\leq C_1\bigg(\int_{\Omega} |u_k|^{\frac{2n \alpha'(p+2)}{2n-\mu}}~dx\bigg)^{\frac{2n-\mu}{n\alpha'}}. \bigg(\int_{\Omega} exp\bigg(|u_k|^{\beta}\frac{2n \alpha}{2n-\mu}\bigg)~dx\bigg)^{\frac{2n-\mu}{n\alpha}}\\
 &+  C_2 \bigg(\int_{\Omega} |u_k|^{\frac{2n \alpha'(p+2)}{2n-\mu}}~dx\bigg)^{\frac{2n-\mu}{2n\alpha'}}. \bigg(\int_{\Omega} exp\bigg(|u_k|^{\beta} \frac{2n \alpha}{2n-\mu}\bigg)~dx\bigg)^{\frac{2n-\mu}{2n\alpha}} \times\\
 &\left[\bigg(\int_{\Omega} |u_k|^{\frac{2n\alpha'(p+2)}{2n-\mu}}~dx \bigg)^{\frac{2n-\mu}{2n\alpha'}}. \bigg(\int_{\Omega} exp\bigg(|u_k|^{\beta} \frac{2n \alpha}{2n-\mu}\bigg)~dx\bigg)^{\frac{2n-\mu}{2n\alpha}}\right. \\
 &+ \left.\bigg(\int_{\Omega} |u_k|^{\frac{2n\alpha'(p+\beta+2)}{2n-\mu}}~dx\bigg)^{\frac{2n-\mu}{2n\alpha'}}. \bigg(\int_{\Omega} exp\bigg(|u_k|^{\beta} \frac{2n \alpha}{2n-\mu}\bigg)~dx\bigg)^{\frac{2n-\mu}{2n\alpha}}\right].
 \end{align*}
  Let $\alpha$ be such that $2n \alpha /(2n-\mu))\|u_k\|^\beta \leq\alpha_n $ and  $v_k= \frac{u_k}{||u_k||}$, then by the Trudinger-Moser inequality we obtain
 \begin{align*}
 |B(u_k)|&\leq C_1\bigg(\int_{\Omega} |u_k|^{\frac{2n \alpha'(p+2)}{2n-\mu}}~dx\bigg)^{\frac{2n-\mu}{n\alpha'}}. \bigg(\sup_{\|v_k\| \leq 1}\int_{\Omega} exp(|v_k|^{\beta}\alpha_n)~dx \bigg)^{\frac{2n-\mu}{n\alpha}}\\
 &+  C_2 \bigg(\int_{\Omega} |u_k|^{\frac{2n \alpha'(p+2)}{2n-\mu}}~dx\bigg)^{\frac{2n-\mu}{2n\alpha'}}.\bigg(\sup_{\|v_k\| \leq 1}\int_{\Omega} exp(|v_k|^{\beta}\alpha_n)~dx \bigg)^{\frac{2n-\mu}{n\alpha}} \times\\
 &\left[\bigg(\int_{\Omega} |u_k|^{\frac{2n\alpha'(p+2)}{2n-\mu}}~dx\bigg)^{\frac{2n-\mu}{2n\alpha'}} + \bigg(\int_{\Omega} |u_k|^{\frac{2n\alpha'(p+\beta+2)}{2n-\mu}}~dx\bigg)^{\frac{2n-\mu}{2n\alpha'}}\right].
 \end{align*}
Using Sobolev embedding, it implies that
 \begin{align*}
 |B(u_k)|&\leq C_1(n,k, \beta, \mu)( \|u_k\|^{2(p+2)} +\|u_k\|^{(p+2)}(\|u_k\|^{(p+2)}+\|u_k\|^{(p+\beta+2)}))\leq C \|u_k\|^{(2p+4)}+ \|u_k\|^{(2p+\beta+4)}.
 \end{align*}
Hence using $u_k \in \Gamma\backslash\{0\}$ and by Sobolev embedding theorem, we get
$1 \leq C (\|u_k\|^{(2p+4- \frac{3n}{2})}+\|u_k\|^{(2p+\beta+4- \frac{3n}{2})}$ and $2p+4- \frac{3n}{2} >0$ which is a contradiction as $\|u_k\| \to 0$ as $k \to \infty$. Therefore we have $\inf_{u \in \Gamma\backslash\{0\} \cap H^+} \|u\|>0.$\\
Step 2: Claim: $0< \inf_{u \in \Gamma\backslash\{0\}\cap H^+} \left\{\displaystyle\int_{\Omega} (|x|^{-\mu}*f(u)u)(p+2-2n+\beta |u|^{\beta})exp(|u|^\beta) |u|^{p+2} ~dx \right\} $.\\
Since $F(s) \leq \frac{f(s)s}{p+2}$ then by the definition of $\Gamma$ and from Step 1, we obtain $0< \inf_{u \in \Gamma\backslash\{0\}\cap H^+} B(u)$ {\it i.e.} \\
\begin{align*}
0&< \inf_{u \in \Gamma\backslash\{0\}\cap H^+} \left\{\int_{\Omega} (|x|^{-\mu}*F(u))f'(u)u^2 +\int_{\Omega} (|x|^{-\mu}*f(u)u)f(u)u\right\}\\
&\leq \inf_{u \in \Gamma\backslash\{0\}\cap H^+} \left\{\int_{\Omega} (|x|^{-\mu}*f(u)u)\bigg(f(u).u+ f'(u)\frac{u^2}{p+2}\bigg)\right\}\\
&=\inf_{u \in \Gamma\backslash\{0\}\cap H^+} \left\{\int_{\Omega} (|x|^{-\mu}*f(u)u)|u|^{p+2} exp(|u|^\beta)\bigg(1+ \frac{(p+1)+\beta |u|^{\beta}}{p+2}\bigg)\right\}.
\end{align*}
Since $p+2-2n>0$, we infer $$0< \inf_{u \in \Gamma\backslash\{0\}\cap H^+} \{\int_{\Omega} (|x|^{-\mu}*f(u)u)(p+2-2n+\beta |u|^{\beta})exp(|u|^\beta) |u|^{p+2} ~dx \}.$$

Step 3: Claim: $\Gamma_0>0$. First,
\begin{equation*}
\int_{\Omega} h(x) |u|^{q+1} \leq \bigg(\int_{\Omega} |h(x)|^\gamma\bigg)^{1/\gamma} \bigg(|u|^{(1+q)\gamma'}\bigg)^{1/{\gamma'}}\leq l \|u\|^{q+1}.
\end{equation*}
where $l = \|h\|_{L^\gamma(\Omega)}$. Choosing
$$\la < \frac{bn}{(2n-1-q)l} M_0 :=\la_0 $$ where $M_0= \inf_{u \in \Gamma \backslash \{0\} \cap H^+} \|u\|^{n-1-q} > 0$, we get that $\la l(2n-1-q) \|u\|^{1+q} < \ nb \|u\|^n$ for any $u \in \Gamma \backslash \{0\} \cap H^+$ . Then for $u \in \Gamma\backslash\{0\}\cap H^+$ and $p+1 > 2n-1$,
\begin{align*}
B(u)+nb\|u\|^n- &(2n-1)\int_{\Omega} (|x|^{-\mu}*F(u)) f(u).u -\la (2n-1-q) H(u) \geq\\
&\int_{\Omega} (|x|^{-\mu}*F(u))(f'(u)u^2-(2n-1) f(u).u)+ \int_{\Omega} (|x|^{-\mu}*f(u).u)f(u).u ~dx \\
&+ nb \|u\|^n - (2n-1-q) \la H(u) > 0.
\end{align*}
Therefore $\Gamma_0 >0$.
\hfill{\QED}
\end{proof}

Now we establish the claim made in Case-2. We notice from Lemma  \ref{inf} and Equation \eqref{KC1} that for $u \in H^+ \backslash \{0\}$, there exists a $t_*>0$, local maximum of $\psi_u$ verifying $\psi_u(t_*)-\la H(u)>0$ since $t_*u \in \Gamma\setminus\{0\} \cap H^+$. From $\psi_u(0)=0$, $\psi_u(t_*)> \la H(u) >0$ and $\lim_{t \rightarrow \infty} \psi_u(t)= -\infty$, there exists $t_1=t_1(u) < t_* < t_2(u)=t_2$ such that $\psi_u(t_1)= \la \int_{\Omega} h(x)|u|^{q+1} ~dx=\psi_u(t_2)$ with $\psi_u'(t_1)>0,\psi_u'(t_2)<0$. Therefore, $t_1u \in N^+_{\la, M}$ and  $t_2u \in N^-_{\la, M}$. We now prove that $t_1 u \in N^+_{\la, M}$ and $t_2 u \in N^-_{\la, M}$ are unique. If not then there exists $t_3 u \in N^+_{\la, M}$ and $t_{**}$ such that $t_2 < t_{**} < t_3$  and $\psi_u'(t_{**})=0$ and $\psi_u(t_{**}) <  \la H(u).$ But our Lemma \ref{inf} induces that if $\psi_u'(t_{**})=0$ then  $\psi_u(t_{**}) > \la H(u)$ which is a contradiction.

In the sequel we will denote $t_*$ the smallest critical point of $\psi_u.$
\begin{Lemma}
If $\la \in (0,\la_0)$ then $N^0_{\la, M}= \{0\}.$
\end{Lemma}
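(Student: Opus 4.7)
The plan is to suppose, for contradiction, that there exists $0 \ne u \in N^0_{\la,M}$ and derive a contradiction from the two conditions it satisfies. Since $\Phi'_{u,M}(t) = t^{q}(\psi_u(t) - \la H(u))$, differentiating gives $\Phi''_{u,M}(t) = q t^{q-1}(\psi_u(t)-\la H(u)) + t^{q}\psi'_u(t)$. Thus, membership in $N^0_{\la,M}$ translates into the two conditions
\begin{equation*}
\psi_u(1) = \la H(u), \qquad \psi'_u(1) = 0.
\end{equation*}
I would then split according to the sign of $H(u)$, exactly along the dichotomy $H^-_0$ vs.\ $H^+$ used to analyze the fiber maps.

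If $u \in H^-_0$, i.e.\ $H(u)\le 0$, then $\psi_u(1) = \la H(u) \le 0$. The computation already carried out in Case~1 (using $m(t)=at+b$, the identity \eqref{role} at $t_1=1$, the pointwise inequality $f'(tu)tu > (p+1)f(tu)$, and the hypothesis $p+1 > 2n-1 > 2n-1-q$) shows that whenever $\psi_u(t) \le 0$ one has $\psi'_u(t) < 0$ \emph{strictly}. Applied at $t=1$, this contradicts $\psi'_u(1)=0$, ruling out this case without any use of $\la_0$.

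If $u \in H^+$, I would use $\psi'_u(1)=0$ to place $u$ inside the set $\Gamma$ of Lemma~\ref{inf}. Indeed, setting $t_*=1$ in the identity derived in Section 4.2 from $\psi'_u(t_*)=0$, we obtain
\begin{equation*}
(2n-1-q)a\|u\|^{2n} + (n-1-q)b\|u\|^{n} + q\!\int_\Omega(|x|^{-\mu}\ast F(u))f(u)u\,dx = B(u),
\end{equation*}
so dropping the nonnegative middle term and applying AM--GM to the remaining left-hand side yields
\begin{equation*}
2\sqrt{(2n-1-q)(n-1-q)ab}\,\|u\|^{3n/2} \le B(u),
\end{equation*}
which is exactly $u \in \Gamma \setminus \{0\} \cap H^+$. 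Substituting $t_*=1$ in \eqref{KC1} and invoking Lemma~\ref{inf} (valid because $\la \in (0,\la_0)$) gives
\begin{equation*}
(2n-1-q)\bigl(\psi_u(1) - \la H(u)\bigr) = B(u) - (2n-1)\!\int_\Omega (|x|^{-\mu}\ast F(u))f(u)u\,dx + nb\|u\|^n - \la(2n-1-q)H(u) \ge \Gamma_0 > 0,
\end{equation*}
contradicting $\psi_u(1)=\la H(u)$.

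The step I expect to be the most delicate is the $H^+$ case: one must verify that the identity coming from $\psi'_u(1)=0$ really does land $u$ inside the set $\Gamma$ defined in Lemma~\ref{inf} (this is a direct AM--GM inequality, but it is crucial that the two leading Kirchhoff terms with coefficients $(2n-1-q)a$ and $(n-1-q)b$ combine correctly into the bound on $\|u\|^{3n/2}$), and then carefully link $\psi_u(1)$ to the quantity $\Gamma_0$ via \eqref{KC1} so as to exploit the strict positivity $\Gamma_0>0$ provided by the choice $\la<\la_0$.
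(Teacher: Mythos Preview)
Your proposal is correct and follows essentially the same approach as the paper. The paper also splits into the cases $u\in H^+$ and $u\in H^-_0$; in the $H^+$ case it combines the two equations $\Phi'_{u,M}(1)=0$ and $\Phi''_{u,M}(1)=0$ (which are equivalent to your $\psi_u(1)=\la H(u)$ and $\psi'_u(1)=0$) to place $u$ in $\Gamma$ and then to force the $\Gamma_0$-quantity to vanish, contradicting Lemma~\ref{inf}; in the $H^-_0$ case it invokes the Case~1 computation to get $\psi'_u(1)<0$, exactly as you do.
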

\begin{proof}
Suppose $u \not\equiv 0$ and $u \in N^0_{\la,M}.$ Then $\Phi_{u, M}^{'}(1)=0$ and $\Phi_{u, M}^{''}(1)=0$, {\it i.e.}
 \begin{align}\label{KC2}
a\|u\|^{2n}+ b\|u\|^{n}= \la H(u)+ \int_{\Omega} (|x|^{-\mu}*F(u)) f(u)u ~dx\mbox{ and}
\end{align}
\begin{align}\label{kc3}
(2n-1) a \|u\|^{2n} +(n-1) b \|u\|^n= \la q H(u)+B(u).
\end{align}
Let $u \in H^+ \cap N_{\la, M}^0,$ then from \eqref{KC2} and \eqref{kc3} (by replacing the value $\la H(u)$), we obtain
$$2 \sqrt{(2n-1-q)(n-1-q)ab \|u\|^{3n}}  \leq B(u)$$
which implies $u \in \Gamma \backslash \{0\} \cap H^+.$
Again from \eqref{KC2}, \eqref{kc3} and substituting the value of $a \|u\|^{2n}$, we obtain
\begin{equation*}
B(u)- (2n-1)\int_{\Omega} (|x|^{\mu}*F(u))f(u).u + nb \|u\|^n - \la (2n-1-q)H(u)=0
\end{equation*} which contradicts Lemma \ref{inf}.
If $u \in H^-_0 \cap N_{\la, M}^0$ and from Case-1, $"1"$ is the only critical point of $\Phi_{u,M}$ and $\Phi^{''}_{u,M}(1)=0.$ But $u \in H^-_0$ implies that $\psi'_u(1)<0$ and then $\phi''_{u,M}(1) <0$ which is a contradiction and  the lemma is proved.
\hfill{\QED}
\end{proof}
\subsection{Existence of weak solutions to $(\mathcal{P}_{\la, M})$}
In this section we prove that $\mathcal{J}_{\la,M}$ is bounded below on $N_{\la,M}$ and achieves its minimum. Define $\theta =\inf_{u \in N_{\la,M}} \mathcal{J}_{\la,M}(u).$
\begin{Theorem}\label{bdd}
$\mathcal{J}_{\la,M}(u)$ is bounded below and coercive on $N_{\la,M}$ such that $\theta \geq - C(q,n,b) \la^{\frac{n}{n-q-1}}.$
\end{Theorem}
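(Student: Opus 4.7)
The plan is to use the Nehari identity to eliminate the Choquard term in $\mc J_{\la,M}$, and then control the remaining sign-changing polynomial term via Young's inequality. Let $u \in N_{\la,M}$. Since $m(t)=at+b$, the condition $\langle \mc J_{\la,M}'(u),u\rangle = 0$ becomes
\begin{equation*}
a\|u\|^{2n} + b\|u\|^n = \la H(u) + \int_\Om (|x|^{-\mu}\ast F(u))f(u)u\,dx.
\end{equation*}
The crucial ingredient is the pointwise inequality $F(t) \leq \tfrac{tf(t)}{p+2}$ for all $t \in \mb R$. Since both $F$ and $t \mapsto tf(t)$ are even, it suffices to check this for $t\geq 0$, where $g(t):= tf(t)-(p+2)F(t)$ satisfies $g(0)=0$ and $g'(t) = \beta\,t^{p+\beta+1}\exp(t^\beta) \geq 0$. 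The structural assumption $p+1 > 2n-1$ then upgrades this to $F(t) \leq \tfrac{tf(t)}{2n}$.

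Because $|x|^{-\mu}\ast F(u) \geq 0$, multiplying the above pointwise inequality by this Riesz potential, integrating, and inserting the Nehari identity gives
\begin{equation*}
\tfrac{1}{2}\int_\Om (|x|^{-\mu}\ast F(u))F(u)\,dx \leq \tfrac{1}{4n}\bigl(a\|u\|^{2n}+b\|u\|^n -\la H(u)\bigr).
\end{equation*}
Substituting this bound into the definition of $\mc J_{\la,M}$ and simplifying yields
\begin{equation*}
\mc J_{\la,M}(u) \geq \tfrac{a}{4n}\|u\|^{2n} + \tfrac{3b}{4n}\|u\|^n - \la\Bigl(\tfrac{1}{q+1}-\tfrac{1}{4n}\Bigr)H(u).
\end{equation*}
The hypothesis $q < n-1$ ensures that the coefficient of $H(u)$ is strictly positive, so the estimate is meaningful regardless of the sign of $H(u)$.

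To conclude, I would apply H\"older's inequality with exponents $r=n/(n-q-1)$ and $r'=n/(q+1)$, together with the continuous Sobolev embedding $W_0^{1,n}(\Om)\hookrightarrow L^n(\Om)$, to obtain $|H(u)| \leq C_1 \|h\|_{L^r(\Om)}\|u\|^{q+1}$. Young's inequality with conjugate exponents $n/(q+1)$ and $n/(n-q-1)$, applied with parameter chosen so that the $\|u\|^n$ contribution is exactly $\tfrac{3b}{8n}\|u\|^n$, delivers
\begin{equation*}
\la C_1\|h\|_{L^r}\Bigl(\tfrac{1}{q+1}-\tfrac{1}{4n}\Bigr)\|u\|^{q+1} \leq \tfrac{3b}{8n}\|u\|^n + C(q,n,b)\la^{\frac{n}{n-q-1}}.
\end{equation*}
Absorbing the $\|u\|^n$ term into $\tfrac{3b}{4n}\|u\|^n$ leaves
\begin{equation*}
\mc J_{\la,M}(u) \geq \tfrac{a}{4n}\|u\|^{2n} + \tfrac{3b}{8n}\|u\|^n - C(q,n,b)\la^{\frac{n}{n-q-1}},
\end{equation*}
which yields both coercivity of $\mc J_{\la,M}$ on $N_{\la,M}$ and the required lower bound $\theta \geq -C(q,n,b)\la^{n/(n-q-1)}$. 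I do not foresee a serious obstacle here: the whole argument hinges on the pointwise ratio $F(t)/(tf(t)) \leq 1/(2n)$, which is precisely what the superlinearity condition $p+1 > 2n-1$ is designed to guarantee.
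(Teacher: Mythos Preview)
Your argument is correct and follows essentially the same route as the paper: both use the Nehari identity together with the pointwise bound $F(t)\leq \tfrac{tf(t)}{p+2}$ (the paper invokes the weaker consequence $F\leq \tfrac{f(u)u}{n}$) to reduce to an expression of the form $c_1\|u\|^n - c_2\la\|u\|^{q+1}$, and then extract the $\la^{n/(n-q-1)}$ lower bound. The only cosmetic difference is that the paper minimizes $g(x)=c_1x^n-c_2x^{q+1}$ explicitly while you use Young's inequality, and you retain the $\tfrac{a}{4n}\|u\|^{2n}$ term throughout whereas the paper absorbs it via a separate substitution; neither changes the substance.
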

\begin{proof}
Let $u \in N_{\la, M}.$ Then,
\begin{align*}
\mathcal{J}_{\la,M}(u)&= \frac{1}{n}\bigg[\frac{a}{2} \|u\|^{2n} + b \|u\|^n\bigg]- \frac{\la}{q+1} H(u) - \frac{1}{2} \int_{\Omega} (|x|^{-\mu}* F(u))F(u)~dx\\
&= \frac{1}{n}\bigg[\frac{a}{2} \|u\|^{2n} + b \|u\|^n\bigg]- \frac{\la}{q+1} H(u) - \frac{1}{2} \int_{\Omega} (|x|^{-\mu}* F(u))F(u)~dx - \frac{1}{p+2}\bigg[a\|u\|^{2n}+ b\|u\|^{n}\\
&\ \ \ \ \ \ \ \ - \la H(u)- \int_{\Omega} (|x|^{-\mu}*F(u)) f(u)u ~dx \bigg]\\
&=a \|u\|^{2n}\bigg(\frac{p+2-2n}{2n(p+2)}\bigg)+ b \|u\|^{n}\bigg(\frac{p+2-n}{n(p+2)}\bigg)- \la\bigg(\frac{p+1-q}{(1+q)(p+2)}\bigg)H(u)\\
&\ \ \ \ \ \ \ \ -\frac{1}{2} \int_{\Omega} (|x|^{-\mu}*F(u))\left(F(u)-\frac{2f(u)u}{p+2}\right)~dx.
\end{align*}
Since $0 \leq F(u) \leq \frac{2}{p+2} f(u).u$ and $H(u)\leq l \|u\|^{q+1}$. Then by the Sobolev inequality we obtain
\begin{align*}
\mathcal{J}_{\la,M}(u) \geq a \|u\|^{2n}\bigg(\frac{p+2-2n}{2n(p+2)}\bigg)+ b \|u\|^{n}\bigg(\frac{p+2-n}{n(p+2)}\bigg)- \la l \bigg(\frac{p+1-q}{(1+q)(p+2)}\bigg)\|u\|^{q+1}.
\end{align*}
Therefore since $q< n-1$, $\,\mathcal{J}_{\la,M}$ is coercive on $N_{\la, M}$, {\it i.e.}
$\mathcal{J}_{\la,M}(u) \rightarrow \infty $ as $\|u\| \rightarrow \infty.$\\
For $u \in N_{\la, M}$ we have also,
\begin{align*}
\mathcal{J}_{\la,M} (u)&= \frac{b}{n} \|u\|^n -\frac{\la}{q+1} H(u)- \frac{1}{2} \int_{\Omega} (|x|^{-\mu}*F(u))F(u)~dx \\
&\ \ \ \ \ + \frac{1}{2n}\bigg(\la H(u) + \int_{\Omega} (|x|^{-\mu}*F(u))f(u)u ~dx - b\|u\|^n \bigg)\\
&= \frac{1}{2n} b \|u\|^n- \la \bigg(\frac{1}{q+1}-\frac{1}{2n}\bigg) H(u)+\frac{1}{2} \bigg( \int_{\Omega} (|x|^{-\mu}*F(u))\bigg(\frac{f(u)u}{n} -F(u)\bigg)~dx\bigg)\\
& \geq \frac{1}{2n} b \|u\|^n- \la \bigg(\frac{1}{q+1}-\frac{1}{2n}\bigg) H(u)
\end{align*}
since $\left(\frac{f(u)u}{n} -F(u)\right)\geq 0.$ Then for $u \in H_0^-$, we get $\mathcal{J}_{\la,M}(u) \geq 0$.\\
 Now for $u \in H^+$, setting $r=\frac{n}{1+q}$ and by the Sobolev embedding we obtain
\begin{align*}
\mathcal{J}_{\la,M} (u) &\geq \frac{b}{2n} \|u\|^n - \frac{\la (2n-1-q)}{2n(q+1)} H(u) \geq \frac{b}{2n} \|u\|^n - \frac{\la (2n-1-q)}{2n(q+1)} l \left(\int_{\Omega} |u|^n ~dx \right)^{1/r}\\
&= c_1 \|u\|^{n} - c_2 \|u\|^{q+1}
\end{align*} where $c_1 = \frac{b }{2n} $ and $c_2 = c_2(\lambda).$\\
We observe that the minimum of the function $g(x)= c_1 x^n - c_2 x^{q+1}$ is achieved at $x=\left(\frac{c_2(q+1)}{c_1 n}\right)^{\frac{1}{n-q-1}}.$
Therefore,
\begin{align*}
\inf_{u \in N_{\la,M}} \mathcal{J}_{\la,M} (u) \geq g \left(\frac{c_2(q+1)}{c_1 n}\right)^{\frac{1}{n-q-1}} = \left(\frac{c_2^n}{c_1^{q+1}}\right)^{\frac{1}{n-1-q}} \left( \left(\frac{q+1}{n}\right)^{\frac{n}{n-1-q}}- \left(\frac{q+1}{n}\right)^\frac{q+1}{n-1-q}\right).
\end{align*}

From this it follows that $$\theta \geq - C(q,n,b) \la^{\frac{n}{n-q-1}} $$
where $C(q,n,b)>0$. This completes the proof of Theorem \ref{bdd}.
\hfill{\QED}
\end{proof}

Now since $\mathcal{J}_{\la,M}$ is bounded below on $N_{\la,M}$, by the Ekeland variational principle we get a sequence $\{u_k\}_{k \in \mathbb{N}} \subset N_{\la, M}\backslash\{0\}$ such that
\begin{equation}\label{Ekeland}
     \left\{
         \begin{alignedat}{2}
             {}    \mathcal{J}_{\la,M}(u_k)
             & {} \leq \theta+ \frac{1}{k};
             \\
              \mathcal{J}_{\la,M}(v)
             & {}\geq \mathcal{J}_{\la,M}(u_k) -\frac{1}{k}\|u_k-v\|,
             && \ \ \ \forall v \in N_{\la, M}.
        \end{alignedat}
     \right.
\end{equation}
\begin{Lemma}\label{lemmaq}
There exists a constant $C_0>0$ such that $\theta \leq - C_0$.
\end{Lemma}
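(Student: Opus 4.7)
The plan is to exhibit an explicit element of $N_{\la,M}$ at which the energy is strictly negative, with a quantitative bound. Since $h^+ \not\equiv 0$, I fix once and for all a nontrivial $u_0 \in W_0^{1,n}(\Omega)$ with $H(u_0) = \int_\Omega h|u_0|^{q+1}\,dx > 0$, so $u_0 \in H^+$. By the fiber-map analysis of Case~2 above, for each $\la \in (0, \la_0)$ there exists $t_1(u_0) \in (0, t_2(u_0))$ with $t_1(u_0) u_0 \in N^+_{\la,M} \subset N_{\la,M}$, and $t_1(u_0)$ realizes the global minimum of $\Phi_{u_0,M}$ on $(0, t_2(u_0))$. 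Hence it is enough to produce some $t_0 \in (0, t_2(u_0))$ at which $\Phi_{u_0,M}(t_0) \leq -C_0 < 0$.

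The observation that makes this essentially immediate is that $F \geq 0$ (since $f$ is odd with $f \geq 0$ on $[0,\infty)$), so the Choquard term in $\Phi_{u_0,M}$ contributes nonpositively. Dropping it yields
\[
\Phi_{u_0,M}(t) \;\leq\; \frac{a}{2n}\, t^{2n} \|u_0\|^{2n} + \frac{b}{n}\, t^n \|u_0\|^n - \frac{\la H(u_0)}{q+1}\, t^{q+1} \;=:\; g(t).
\]
Since $q+1 < n < 2n$, the negative $t^{q+1}$ term dominates both positive contributions as $t \to 0^+$. Minimizing the two-term expression $\frac{b}{n} t^n \|u_0\|^n - \frac{\la H(u_0)}{q+1} t^{q+1}$ in $t$ gives an explicit optimizer $t_0 \asymp \la^{1/(n-q-1)}$ at which its value is of order $-\la^{n/(n-q-1)}$; the remaining positive term $\frac{a}{2n} t_0^{2n}\|u_0\|^{2n} = O(\la^{2n/(n-q-1)})$ is of strictly higher order in $\la$, so after shrinking $\la_0$ further if needed it is absorbed, yielding $g(t_0) \leq -C_0$ for some explicit $C_0 = C_0(\la, u_0, n, q, a, b) > 0$.

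The remaining point is to verify that $t_0 \in (0, t_2(u_0))$. Since $t_0 \asymp \la^{1/(n-q-1)} \to 0$ as $\la \to 0$, while $t_2(u_0)$ stays bounded below away from zero as $\la \to 0$ (it converges to the unique positive root of $\psi_{u_0} = 0$ lying to the right of the peak of $\psi_{u_0}$), this inclusion holds after possibly a further reduction of $\la_0$. Combining the estimates,
\[
\theta \;\leq\; \mathcal{J}_{\la,M}(t_1(u_0) u_0) \;=\; \Phi_{u_0,M}(t_1(u_0)) \;\leq\; \Phi_{u_0,M}(t_0) \;\leq\; g(t_0) \;\leq\; -C_0,
\]
which is the desired bound. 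I expect no genuine obstacle: the argument is essentially scaling plus bookkeeping, and the main care needed is tracking the joint $\la$-dependence of $t_0$, $C_0$, and $t_2(u_0)$ to ensure $t_0 < t_2(u_0)$ holds uniformly for $\la \in (0, \la_0)$.
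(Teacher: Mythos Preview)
Your argument is correct. Both proofs begin by fixing $u_0\in H^+$ and using that $t_1(u_0)u_0\in N^+_{\la,M}$, but the way the negativity of $\mathcal{J}_{\la,M}(t_1 u_0)$ is extracted is genuinely different. The paper does not drop the Choquard term or compare with an auxiliary polynomial $g(t)$; instead it uses the two relations $\Phi'_{u_0,M}(t_1)=0$ and $\Phi''_{u_0,M}(t_1)>0$ to eliminate both $\la H(u_0)$ and $a\|t_1u_0\|^{2n}$ from $\mathcal{J}_{\la,M}(t_1u_0)$, arriving at an expression whose leading term is $-\frac{(n-1-q)}{2n(q+1)}b\|t_1u_0\|^n$ and whose remaining Choquard-type terms are shown to be nonpositive using $q<n-1$ and $p+1>2n-1$. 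This algebraic route works for every $\la\in(0,\la_0)$ with no further restriction on $\la_0$.

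Your scaling approach is more elementary and has the bonus of producing the explicit order $C_0\asymp\la^{n/(n-q-1)}$, which matches the lower bound on $\theta$ proved just before the lemma and is therefore sharp in $\la$. The price is the bookkeeping you flag: ensuring $t_0<t_2(u_0)$ and absorbing the $t^{2n}$ term may force a further reduction of $\la_0$. That caveat is harmless for the later applications (the existence theorems only need $\la$ small), but it does make your statement slightly weaker than the paper's. If you want to avoid shrinking $\la_0$ altogether, note that $t_2(u_0)>t_*(u_0)$ with $t_*(u_0)$ independent of $\la$, and that $g(t)<0$ for all sufficiently small $t$; so any $t_0$ chosen small enough relative to $t_*(u_0)$ already lies in $(0,t_2(u_0))$ and gives $\Phi_{u_0,M}(t_1)\le\Phi_{u_0,M}(t_0)\le g(t_0)<0$ for every $\la\in(0,\la_0)$, at the cost of losing the sharp $\la$-dependence of $C_0$.
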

\begin{proof}
Let $u\in H^+$, then $\exists$ $t_1(u)>0$ such that $t_1 u \in N^+_{\la ,M}$ and $\psi_{u,M}(t_1)= \la H(u)$. In that case,
\begin{align*}
\mathcal{J}_{\la,M}(t_1 u)&= \frac{1}{n}\bigg(\frac{a}{2}\|t_1 u\|^{2n} + b\|t_1 u\|^n\bigg)-\frac{1}{2} \int_{\Omega} (|x|^{-\mu}*F(t_1 u))F(t_1 u)~dx- \frac{\la}{q+1} \int_{\Omega} h(x)|t_1 u|^{q+1}~dx\\
&= \frac{1}{n}\bigg(\frac{a}{2}\|t_1u\|^{2n} + b\|t_1u\|^n\bigg)-\frac{1}{2} \int_{\Omega} (|x|^{-\mu}*F(t_1u))F(t_1u)~dx\\
&\ \ \ \ -\frac{1}{q+1} \bigg( a\|t_1u\|^{2n}+b\|t_1u\|^n-\int_{\Omega}(|x|^{-\mu}*F(t_1u))f(t_1u)t_1u~dx \bigg).
\end{align*}
Since $\Phi^{'}_{u,M}(t_1)=0$, $\Phi^{''}_{u,M}(t_1)>0$ and from \eqref{KC21} we obtain
\begin{align*}
\mathcal{J}_{\la,M}(t_1 u) & = \frac{-(n-1-q)}{2n(q+1)} b \|t_1 u\|^n + \int_{\Omega}(|x|^{-\mu}*F(t_1u))\bigg(\frac{2n+q}{2n(q+1)} f(t_1u)t_1u \nonumber\\
&\ \ \ \ \  - \frac{1}{2} F(t_1u)-\frac{f'(t_1u)(tu)^2}{2n(q+1)}\bigg)~dx -\frac{1}{2n(q+1)} \int_{\Omega} (|x|^{-\mu}*f(t_1 u)t_1 u) f(t_1 u)t_1 u ~dx\nonumber\\
& \leq \frac{-(n-1-q)}{2n(q+1)} b \|t_1 u\|^n + \int_{\Omega}(|x|^{-\mu}*F(t_1u))\bigg(\frac{2n+q}{2n(q+1)} -\frac{p+2}{2n(q+1)}\nonumber\\
&\ \ \ \ \ \ \ - \frac{p+1}{2n(q+1)}\bigg)f(t_1u)t_1u ~dx- \frac{1}{2} \int_{\Omega}(|x|^{-\mu}*F(t_1u))F(t_1u)~dx.
\end{align*}
Since $q<n-1$ and $p+1 > 2n-1$ we set $2n+q-(2p+3) \leq 3n-1-(4n-1) <0$ and then $\theta \leq \inf_{u \in N_{\la,M}^+\cap H^+} \mathcal{J}_{\la,M}(u)\leq - C_0 <0.$
\hfill{\QED}\\
\end{proof}
Then by \eqref{Ekeland} and Lemma \ref{lemmaq}, we have for large $k$,\\\
\begin{equation}\label{KC9}
\mathcal{J}_{\la,M}(u_k) \leq - \frac{C_0}{2}.
\end{equation}
Also since $u_k \in N_{\la,M}\backslash\{0\}$ we have
\begin{align*}
\mathcal{J}_{\la,M}(u_k)&=a \|u_k\|^{2n}\bigg(\frac{p+2-2n}{2n(p+2)}\bigg)+ b \|u_k\|^{n}\bigg(\frac{p+2-n}{n(p+2)}\bigg)- \la\bigg(\frac{p+1-q}{(1+q)(p+2)}\bigg)H(u_k)\\
&\ \ \ \ \ \ \ \ -\frac{1}{2} \int_{\Omega} (|x|^{-\mu}*F(u_k))\left(F(u_k)-\frac{2f(u_k)u_k}{p+2}\right)~dx.
\end{align*}
then together with \eqref{KC9}, we have
$$- \la \bigg(\frac{p+1-q}{(1+q)(p+2)}\bigg)H(u_k) \leq - \frac{C_0}{2} \Longrightarrow H(u_k) \geq \frac{C_0(p+2)(1+q)}{2\la(p+1-q)} C_0 > 0$$
{\it i.e.}
\begin{equation}\label{sequence}
 H(u_k) > C >0\ \ \ \ \forall k \ \ \text{and}\ \ u_k \in N_{\la,M} \cap H^+.
\end{equation}

The following result shows that minimizers for $\mathcal{J}_{\la,M}$ in any of the subsets of $N_{\la,M}$ are critical points for $\mathcal{J}_{\la,M}.$
\begin{Lemma}
Let u be a local minimizer for $J_{\la,M}$ on any subsets of $N_{\la,M}$ such that $u \not\in N_{\la,M}^0$. Then $u$ is a critical point of $\mathcal{J}_{\la,M}.$
\end{Lemma}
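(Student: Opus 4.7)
My approach is the standard Lagrange multiplier argument tailored to the Nehari constraint. Define $\Psi : W^{1,n}_0(\Om) \to \mb R$ by $\Psi(v) = \langle \mathcal{J}'_{\la,M}(v), v \rangle$, so that $N_{\la,M} = \Psi^{-1}(0) \setminus \{0\}$. Since $u$ is a local minimizer of $\mathcal{J}_{\la,M}$ subject to the constraint $\Psi = 0$, provided the constraint is non-degenerate at $u$, classical Lagrange multiplier theory yields some $\sigma \in \mb R$ with
\[
\mathcal{J}'_{\la,M}(u) = \sigma \, \Psi'(u).
\]

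The key observation to carry out is the link between $\langle \Psi'(u), u \rangle$ and $\Phi''_{u,M}(1)$. From the identity $t\,\Phi'_{u,M}(t) = \langle \mathcal{J}'_{\la,M}(tu), tu\rangle = \Psi(tu)$, differentiating in $t$ and evaluating at $t = 1$ gives
\[
\Phi'_{u,M}(1) + \Phi''_{u,M}(1) = \langle \Psi'(u), u \rangle.
\]
Since $u \in N_{\la,M}$, we have $\Phi'_{u,M}(1) = 0$; and because $u \notin N^0_{\la,M}$, we have $\Phi''_{u,M}(1) \neq 0$. Therefore $\langle \Psi'(u), u\rangle \neq 0$, which in particular guarantees $\Psi'(u) \neq 0$, justifying the applicability of the Lagrange multiplier rule above.

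To conclude, test the multiplier relation against $u$ itself: using $u \in N_{\la,M}$,
\[
0 = \langle \mathcal{J}'_{\la,M}(u), u\rangle = \sigma \,\langle \Psi'(u), u\rangle = \sigma\, \Phi''_{u,M}(1).
\]
Since $\Phi''_{u,M}(1) \neq 0$, we deduce $\sigma = 0$, and so $\mathcal{J}'_{\la,M}(u) = 0$, i.e.\ $u$ is a free critical point of $\mathcal{J}_{\la,M}$.

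The main (and essentially only) subtlety is the computation showing $\langle \Psi'(u), u\rangle = \Phi''_{u,M}(1)$; everything else is formal. The remark about ``any subset of $N_{\la,M}$'' is handled automatically because the above derivation only requires $u$ to be a local minimizer relative to the full Nehari constraint, which is the case whenever $u$ minimizes over $N^+_{\la,M}$ or $N^-_{\la,M}$ (since these are relatively open in $N_{\la,M}$ under the hypothesis $u \notin N^0_{\la,M}$).
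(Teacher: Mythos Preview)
Your proof is correct and follows essentially the same Lagrange multiplier argument as the paper: introduce the constraint functional $\Psi(v)=\langle \mathcal{J}'_{\la,M}(v),v\rangle$ (the paper writes $I_{\la,M}$), obtain $\mathcal{J}'_{\la,M}(u)=\sigma\,\Psi'(u)$, test against $u$, and use $\langle \Psi'(u),u\rangle=\Phi''_{u,M}(1)\neq 0$ to force $\sigma=0$. If anything, you are more careful than the paper in explicitly verifying $\Psi'(u)\neq 0$ before invoking the multiplier rule and in deriving the identity $\langle \Psi'(u),u\rangle=\Phi''_{u,M}(1)$ from $\Psi(tu)=t\,\Phi'_{u,M}(t)$.
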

\begin{proof}
Let $u$ be a local minimizer for $\mathcal{J}_{\la, M}.$ Then, in any case $u$ is a minimizer for $\mathcal{J}_{\la,M}$ under the constraint $I_{\la,M}(u):=\langle \mathcal{J}'_{\la,M}(u),u \rangle =0.$ Hence , by the theory of Lagrange multipliers , there exists a $\mu \in \mathbb{R}$ such that $\mathcal{J}'_{\la,M} = \mu I_{\la,M}'(u).$ Thus $\langle \mathcal{J}'_{\la,M}(u), u \rangle = \mu \langle I'_{\la,M}(u),u \rangle = \mu \Phi''_{\la, M}(1)=0,$ but $u \not\in N^0_{\la,M}$ and so $\Phi''_{\la, M}(1) \neq 0.$ Hence $\mu=0.$
\hfill{\QED}
\end{proof}

\begin{Lemma}\label{compl1}
Let $\la \in (0, \la_0)$ where $\la_0= \frac{bn}{(2n-1-q)l} M_0$. Then given any $u \in N_{\la, M} \backslash \{0\},$ then there exists $\epsilon >0$ and a differentiable function $\xi : B(0, \epsilon) \subset W_0^{1,n}(\Omega) \rightarrow \mathbb{R}$ such that $\xi(0)=1$, and $\xi(w)(u-w) \in N_{\la,M}$ and for all $w \in W_0^{1,n}(\Omega)$
\begin{align}\label{4.10}
\langle &\xi'(0),w \rangle = \frac{n(2a\|u\|^n+b) \int_{\Omega} |\nabla(u)|^{n-2} \nabla u.\nabla w~dx- \la (q+1) \int_{\Omega} h(x) |u|^{q-1} u w ~dx - \langle S(u),w \rangle} {a(2n-1-q)\|u\|^{2n}+ b(n-1-q)\|u\|^n + R(u)}
\end{align}
where $$R(u)=\int_{\Omega} (|x|^{-\mu}*F(u))(qf(u)-f'(u).u).u~dx -\int_{\Omega} (|x|^{-\mu}*f(u).u)f(u)u~dx$$
and $$\langle S(u),w \rangle= \int_{\Omega} (|x|^{-\mu}*F(u))(f'(u)u +f(u)) w ~dx + \int_{\Omega} (|x|^{-\mu}*f(u)u)f(u)w~dx .$$
\end{Lemma}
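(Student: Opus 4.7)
The plan is a standard implicit function theorem argument adapted to the Nehari manifold, with the twist that the Choquard/Kirchhoff structure makes the derivative computations somewhat lengthy. Define the auxiliary map $F:\mathbb R\times W^{1,n}_0(\Omega)\to\mathbb R$ by
\[
F(t,w) := \langle \mathcal{J}'_{\la,M}(t(u-w)),\, t(u-w)\rangle.
\]
A direct expansion gives
\begin{align*}
F(t,w) &= a t^{2n}\|u-w\|^{2n} + b t^n\|u-w\|^n - \la t^{q+1}\int_\Omega h(x)|u-w|^{q+1}dx \\
 &\quad - \int_\Omega\!\bigl(|x|^{-\mu}\ast F(t(u-w))\bigr)\, f(t(u-w))\, t(u-w)\,dx.
\end{align*}
Since $u\in N_{\la,M}$, we have $F(1,0)=0$. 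Note that $F(\cdot,0)=t\,\Phi'_{u,M}(t)$, so
\[
\partial_tF(1,0) = \Phi'_{u,M}(1)+\Phi''_{u,M}(1) = \Phi''_{u,M}(1),
\]
which is nonzero because $u\notin N^0_{\la,M}=\{0\}$ (by the preceding lemma, valid for $\la\in(0,\la_0)$). An elementary rewriting using the Nehari identity $a\|u\|^{2n}+b\|u\|^n = \la H(u)+\int(|x|^{-\mu}\ast F(u))f(u)u\,dx$ transforms this derivative into the claimed denominator
\[
a(2n-1-q)\|u\|^{2n}+b(n-1-q)\|u\|^n + R(u).
\]

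Next I would verify that $F$ is of class $C^1$ on a neighbourhood of $(1,0)$: the polynomial/Kirchhoff terms are smooth, and the Choquard term is $C^1$ thanks to the Hardy--Littlewood--Sobolev inequality (Proposition~\ref{HLS}) together with the Moser--Trudinger inequality (Theorem~\ref{TM-ineq}), which provide all necessary integrability for $f(u)$, $f'(u)u$, $F(u)$ and their convolutions with $|x|^{-\mu}$ in the required dual exponents. With $F\in C^1$ and $\partial_tF(1,0)\neq 0$, the implicit function theorem yields $\varepsilon>0$ and a $C^1$ map $\xi:B(0,\varepsilon)\to\mathbb R$ with $\xi(0)=1$ and $F(\xi(w),w)=0$ for all $w\in B(0,\varepsilon)$. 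This last identity is exactly the statement $\xi(w)(u-w)\in N_{\la,M}$.

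Finally, for the formula for $\xi'(0)$, I would differentiate $F(\xi(w),w)=0$ implicitly at $w=0$ to obtain $\langle\xi'(0),w\rangle = -\partial_wF(1,0)[w]/\partial_tF(1,0)$. The computation of $\partial_wF(1,0)[w]$ is the routine but bookkeeping-heavy step: the Kirchhoff term contributes $-n(2a\|u\|^n+b)\int|\nabla u|^{n-2}\nabla u\cdot\nabla w\,dx$, the $\la$-term contributes $\la(q+1)\int h|u|^{q-1}uw\,dx$, and for the Choquard term one uses the product rule together with the symmetry of the kernel $|x-y|^{-\mu}$ to identify $\int(|x|^{-\mu}\ast f(u)w)f(u)u\,dx = \int(|x|^{-\mu}\ast f(u)u)f(u)w\,dx$, producing exactly $\langle S(u),w\rangle$. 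Substituting gives \eqref{4.10}.

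The genuinely delicate point is not the IFT itself but making the Choquard-term differentiation rigorous: one must justify differentiating under the double integral with the exponential nonlinearity $f(t)=t|t|^p\exp(|t|^\beta)$. This is where Proposition~\ref{HLS} with conjugate exponents $2n/(2n-\mu)$ combined with the subcritical range $\beta\in(1,n/(n-1))$ and the Moser--Trudinger bound supply uniform $L^1$-domination on a small neighbourhood of $(1,0)$, allowing an application of dominated convergence. Once this integrability is in place, the remainder of the proof is algebraic.
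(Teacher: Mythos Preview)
Your proposal is correct and follows essentially the same implicit function theorem argument as the paper: the paper defines $G_u(t,v)=t^{-q-1}F(t,v)$ in your notation, checks $G_u(1,0)=\Phi'_{u,M}(1)=0$ and $\partial_t G_u(1,0)=\Phi''_{u,M}(1)\neq 0$ (using $N^0_{\la,M}=\{0\}$), and then applies the IFT and differentiates to obtain \eqref{4.10}. The extra factor $t^{-q-1}$ is cosmetic since it does not change the zero set or the derivatives at $(1,0)$, and your remarks on the $C^1$ regularity of the Choquard term and the kernel symmetry used to identify $\langle S(u),w\rangle$ make explicit what the paper leaves implicit.
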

\begin{proof}
Fix $u \in N_{\la,M}\backslash \{0\},$ define a function $G_u: \mathbb{R}\times W_0^{1,n}(\Omega) \rightarrow \mathbb{R}$
\begin{align*}
G_u(t,v)&=a t^{2n-1-q}\|u-v\|^{2n}+ b t^{n-1-q} \|u-v\|^{n} - \frac{1}{t^q} \int_{\Omega} (|x|^{-\mu}*F(t(u-v)))f(t(u-v)).(u-v) ~dx \\
&-\la \int_{\Omega} h|u-v|^{q+1} ~dx.
\end{align*}

Then $G_u \in C^1(\mathbb{R}\times W_0^{1,n}(\Omega), \mathbb{R})$ and
$$G_u(1,0)=a \|u\|^{2n}+b \|u\|^n -\int_{\Omega} (|x|^{-\mu}*F(u))f(u).u~dx -\la \int_{\Omega} h |u|^{q+1}~dx = \Phi_u'(1)=0$$
and $$\frac{\partial}{\partial t} G_u(1,0)= a(2n-1-q)\|u\|^{2n} + b(n-1-q)\|u\|^{n} + q \int_{\Omega} (|x|^{-\mu}*F(u))f(u).u - B(u)= \phi_u''(1)\neq 0.$$
Then by the implicit function theorem, there exists $\epsilon >0$ and a differentiable function $\xi : B(0, \epsilon) \subset W_0^{1,n}(\Omega) \rightarrow \mathbb{R}$ such that $\xi(0)=1$ and $G_u(\xi(w),w)=0 \ \ \forall w \in B(0, \epsilon)$ which is equivalent to $\langle \mathcal{J}_{\la,M}'(\xi(w)(u-w),\xi(w)(u-w))\rangle =0 \ \ \forall v \in B(0, \epsilon)$. Thus, $\xi(w)(u-w) \in N_{\la,M}$ and differentiating
\begin{align*}
G_u(\xi(w),w)&= a (\xi(w))^{2n-1-q}\|u-w\|^{2n} + b (\xi(w))^{n-1-q} \|u-w\|^n \\
&-\frac{1}{(\xi(w))^q} \int_{\Omega} (|x|^{-\mu}*F(\xi(w))(u-w))f(\xi(w)(u-w))(u-w)-\la \int_{\Omega} h(x)|u-w|^{q+1}=0
\end{align*}
with respect to $w$, we obtain \eqref{4.10}.
\hfill{\QED}\\
\end{proof}
Similarly we have:
\begin{Lemma}\label{compli2}
Let $\la \in (0, \la_0)$ where $\la_0= \frac{bn}{(2n-1-q)l} M_0$. Then there exists $u \in N^-_{\la,M} \backslash \{0\},$ then there exists $\epsilon >0$ and a differentiable function $\xi^- : B(0, \epsilon) \subset W_0^{1,n}(\Omega) \rightarrow \mathbb{R}$ such that $\xi^-(0)=1$, and $\xi^-(w)(u-w) \in N^-_{\la,M}$ and for all $w \in W_0^{1,n}(\Omega)$
\begin{align*}
\langle &(\xi^-)'(0),w \rangle = \frac{n(2a\|u\|^n+b) \int_{\Omega} |\nabla(u)|^{n-2} \nabla u.\nabla w ~dx- \la (q+1) \int_{\Omega} h(x) |u|^{q-1} u w ~dx - \langle S(u),w \rangle} {a(2n-1-q)\|u\|^{2n}+ b(n-1-q)\|u\|^n + R(u)}
\end{align*}
where $R(u)$ and $S(u)$ are as in lemma~\ref{compl1}.
\end{Lemma}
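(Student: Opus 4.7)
The proof plan is to adapt the argument of Lemma~\ref{compl1} and add a continuity step to ensure membership in the negative component $N^-_{\la,M}$ rather than merely $N_{\la,M}$. Since $u \in N^-_{\la,M}\subset N_{\la,M}\setminus\{0\}$, Lemma~\ref{compl1} already produces an $\epsilon_0>0$ and a $C^1$ function $\xi: B(0,\epsilon_0)\to \mathbb{R}$ with $\xi(0)=1$, with $\xi(w)(u-w)\in N_{\la,M}$, and satisfying the claimed derivative formula at $0$. So the first step is simply to invoke Lemma~\ref{compl1} verbatim: this gives me the function $\xi$, its differentiability, and the expression for $\langle \xi'(0),w\rangle$.

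The new content is to verify that, after possibly shrinking $\epsilon_0$ to some $\epsilon\in (0,\epsilon_0]$, the image stays in $N^-_{\la,M}$. For this I would consider the map
\[
\Psi\colon B(0,\epsilon_0)\times (0,\infty) \longrightarrow \mathbb{R}, \qquad \Psi(w,t) \;=\; \Phi''_{u-w,M}(t),
\]
which is continuous by the continuity of $m$, $m'$, $f$, $f'$ and the Sobolev/Hardy--Littlewood--Sobolev continuity estimates used throughout the paper (all the nonlinear integrals defining $\Phi''$ depend continuously on the pair $(w,t)$ through standard dominated-convergence arguments combined with the Trudinger--Moser estimate). Since $u\in N^-_{\la,M}$ gives $\Psi(0,1)=\Phi''_{u,M}(1)<0$, there exist neighborhoods of $0$ in $W_0^{1,n}(\Om)$ and of $1$ in $\mathbb{R}$ on which $\Psi<0$. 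Because $\xi$ is continuous with $\xi(0)=1$, I can choose $\epsilon\in(0,\epsilon_0]$ small enough that both $w\in B(0,\epsilon)$ and $\xi(w)$ lies in the chosen neighborhood of $1$. Then
\[
\Phi''_{u-w,M}(\xi(w)) \;=\; \Psi(w,\xi(w)) \;<\; 0,
\]
which together with $\xi(w)(u-w)\in N_{\la,M}$ yields $\xi(w)(u-w)\in N^-_{\la,M}$. Defining $\xi^-:=\xi\big|_{B(0,\epsilon)}$ completes the construction, and the derivative formula is inherited from Lemma~\ref{compl1}.

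The only mildly delicate point is the continuity of $\Psi$ at $(0,1)$: the term $\int_\Om (|x|^{-\mu}*F(\,\cdot\,))f'(\,\cdot\,)(\,\cdot\,)^2 dx$ and the corresponding $f(\cdot)u$-convolution term involve the exponential nonlinearity, so I would justify continuity exactly as in the proofs of Lemmas~\ref{wk-sol} and \ref{PS-ws}: use the Hardy--Littlewood--Sobolev inequality to dominate by $L^{2n/(2n-\mu)}$ norms of $F,f,f'\cdot(\cdot)^2$, then bound the latter by a Trudinger--Moser estimate uniform on a small ball of $W^{1,n}_0(\Om)$, and pass to the limit by Vitali's theorem. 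This is the main (but routine) obstacle; everything else is a direct transcription of Lemma~\ref{compl1}'s implicit-function argument.
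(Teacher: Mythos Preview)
Your proposal is correct and follows essentially the same approach as the paper: invoke Lemma~\ref{compl1} to obtain the differentiable map and the derivative formula, then use a continuity argument to shrink $\epsilon$ so that $\Phi''_{u-w,M}(\xi(w))<0$ (equivalently $\Phi''_{\xi(w)(u-w),M}(1)<0$, since these differ by the positive factor $\xi(w)^2$), ensuring $\xi(w)(u-w)\in N^-_{\la,M}$. The paper states the continuity step more tersely (``by the continuity of $\mathcal{J}'_{\la,M}$ and $\xi^-$''), but your more detailed justification via the map $\Psi$ and Trudinger--Moser/Vitali is exactly what underlies that sentence.
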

\begin{proof}
For any $u \in N_{\la,M}^-$, $\Phi^{'}_{u,M} (1)=0$ and $ \Phi^{''}_{u,M} (1)<0$.
This implies $ u \in \Gamma\backslash \{0\}$.  Then by Lemma \ref{compl1} there exists $\epsilon >0$ and a differentiable function $\xi^-:B(0, \epsilon)\subset W_0^{1,n}(\Omega) \rightarrow \mathbb{R}$ such that $\xi^-(0)=1$, and $\xi^-(w)(u-w) \in N_{\la,M}$ for all $w \in B(0, \epsilon).$ Then by the continuity of $\mathcal{J}^{'}_{\la,M}$ and $\xi^-$ and by choosing $\epsilon$ small enough we have
\begin{align*}
      \Phi^{''}_{\xi^-(u)(u-w),M} (1)&= n \|\xi^-(u)(u-w)\|^{2n} m(\|\xi^-(u)(u-w)\|^n)+ (n-1)\|\xi^-(u)
      (u-w)\|^n m(\|tu\|^n) \\
      &- \la q \int_{\Omega} h(x) |\xi^-(u)(u-w)|^{q+1}~dx\\
      &- \int_{\Omega} (|x|^{-\mu} *f(\xi^-(u)(u-w)).
      \xi^-(u)(u-w))f(\xi^-(u)(u-w))\xi^-(u)(u-w) ~dx \\
      &- \int_{\Omega} (|x|^{-\mu}* F(\xi^-(u)(u-w)))f'(\xi^-(u)(u-w))(\xi^-(u)(u-w))^2 ~dx
      <0
\end{align*}
that implies $\xi^-(w)(u-w) \in N_{\la,M}^-.$
\hfill{\QED}
\end{proof}\\
Now we prove the following result:
\begin{Proposition}\label{j}
Let $\la \in (0, \la_0)$ where $\la_0= \frac{bn}{(2n-1-q)l} M_0$. Assume $u_k\in N_{\la,M}$ is satisfying \eqref{sequence}. Then $\|\mathcal{J}^{'}_{\la,M}(u_k)\|_* \rightarrow 0$ as $k \rightarrow \infty.$
\end{Proposition}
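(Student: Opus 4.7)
The strategy is to use the Ekeland sequence $\{u_k\}$ supplied by \eqref{Ekeland} and test against perturbations of $u_k$ that stay on the Nehari manifold, produced by the implicit function $\xi_k$ from Lemma~\ref{compl1} applied at $u=u_k$. For any $w\in W^{1,n}_0(\Om)$ with $\|w\|=1$ and $\rho>0$ small, the point $v_\rho := \xi_k(\rho w)(u_k-\rho w)$ lies in $N_{\la,M}$, so the second inequality of \eqref{Ekeland} yields
\[
\mathcal{J}_{\la,M}(v_\rho)-\mathcal{J}_{\la,M}(u_k)\ \geq\ -\frac{\|v_\rho-u_k\|}{k}.
\]
Writing $v_\rho-u_k = (\xi_k(\rho w)-1)u_k - \rho\,\xi_k(\rho w)\,w$, performing a first-order Taylor expansion of $\mathcal{J}_{\la,M}$ about $u_k$, and exploiting $\langle \mathcal{J}'_{\la,M}(u_k),u_k\rangle = 0$ (which follows from $u_k\in N_{\la,M}$), the inequality reduces to
\[
-\rho\,\xi_k(\rho w)\,\langle \mathcal{J}'_{\la,M}(u_k),w\rangle + o(\rho)\ \geq\ -\frac{1}{k}\bigl(|\xi_k(\rho w)-1|\,\|u_k\| + \rho\,|\xi_k(\rho w)|\bigr).
\]
Dividing by $\rho$, sending $\rho\to 0^+$ and interchanging $w$ with $-w$ leads to
\[
|\langle \mathcal{J}'_{\la,M}(u_k),w\rangle|\ \leq\ \frac{C}{k}\bigl(1+\|(\xi_k)'(0)\|_*\,\|u_k\|\bigr),
\]
so the claim will follow once $\|u_k\|$ and $\|(\xi_k)'(0)\|_*$ are shown to be bounded uniformly in $k$.

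For the first bound, coercivity of $\mathcal{J}_{\la,M}$ on $N_{\la,M}$ from Theorem~\ref{bdd}, together with $\mathcal{J}_{\la,M}(u_k)\leq \theta + 1/k$, gives boundedness of $\{u_k\}$ in $W^{1,n}_0(\Om)$. Furthermore, \eqref{sequence} combined with H\"older's inequality yields $\|u_k\|^{q+1}\geq C/l$, so $\|u_k\|$ is also bounded below away from $0$. Given these two-sided bounds, the Hardy--Littlewood--Sobolev inequality, H\"older's inequality and Theorem~\ref{TM-ineq} show that the numerator in the explicit expression \eqref{4.10} for $\langle (\xi_k)'(0),w\rangle$ is controlled by $C\|w\|$ uniformly in $k$. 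The crux is therefore to bound the denominator
\[
\Phi''_{u_k,M}(1)\ =\ a(2n-1-q)\|u_k\|^{2n}+b(n-1-q)\|u_k\|^n + R(u_k)
\]
away from $0$ uniformly in $k$.

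This last step is the main technical obstacle. I would argue by contradiction: if $\Phi''_{u_{k_j},M}(1)\to 0$ along a subsequence, then, using the two-sided bounds, up to a further subsequence $u_{k_j}\rightharpoonup u^*$ weakly in $W^{1,n}_0(\Om)$ and strongly in every $L^s(\Om)$, $1\leq s <\infty$. Continuity of $H$ together with \eqref{sequence} gives $H(u^*)\geq C>0$, so $u^*\not\equiv 0$. A Vitali-type passage to the limit in the Choquard terms (justified by the uniform exponential integrability obtained from the two-sided bound on $\|u_{k_j}\|$ via Theorem~\ref{TM-ineq}, with the restriction on $\la$ ensuring that $\|u_{k_j}\|^{n/(n-1)}$ stays within the admissible Moser range) allows taking the limit in both $\Phi'_{u_{k_j},M}(1)=0$ and $\Phi''_{u_{k_j},M}(1)\to 0$, yielding $u^*\in N^0_{\la,M}\setminus\{0\}$. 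This contradicts the previous lemma stating that $N^0_{\la,M}=\{0\}$ for $\la\in(0,\la_0)$. Thus $\Phi''_{u_k,M}(1)$ stays uniformly away from $0$, and the Ekeland computation above closes to give $\|\mathcal{J}'_{\la,M}(u_k)\|_*\to 0$.
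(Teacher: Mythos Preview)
Your Ekeland framework, the Taylor expansion, and the reduction to a uniform bound on $\|(\xi_k)'(0)\|_*$ are exactly the paper's route. The genuine gap is in your contradiction argument for the denominator.

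First, the parenthetical claim that the restriction $\la<\la_0$ keeps $\|u_{k_j}\|^{n/(n-1)}$ within the admissible Moser range is unsupported: $\la_0$ is chosen in Lemma~\ref{inf} solely to force $\Gamma_0>0$, and coercivity on $N_{\la,M}$ (Theorem~\ref{bdd}) gives no such quantitative upper bound. So in the critical case $\beta=\frac{n}{n-1}$ your Vitali step for the Choquard terms is unjustified. Second, and more seriously in either case, both $\Phi'_{u_{k_j},M}(1)$ and $\Phi''_{u_{k_j},M}(1)$ contain $\|u_{k_j}\|^{n}$ and $\|u_{k_j}\|^{2n}$. Weak convergence $u_{k_j}\rightharpoonup u^*$ only gives $\|u^*\|\leq\liminf\|u_{k_j}\|$, so the limiting relations read $a\tau^{2n}+b\tau^{n}=\la H(u^*)+\cdots$ with $\tau=\lim\|u_{k_j}\|$, not $\Phi'_{u^*,M}(1)=\Phi''_{u^*,M}(1)=0$. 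You therefore cannot conclude $u^*\in N^0_{\la,M}$, and the appeal to $N^0_{\la,M}=\{0\}$ does not close the argument.

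The paper avoids both obstacles by arguing algebraically at fixed $k$, with no weak-limit passage. If the denominator (equal to $\Phi''_{u_k,M}(1)$ on $N_{\la,M}$) tends to $0$ along a subsequence, one may assume $u_k\in N^+_{\la,M}$, and combining $\Phi''_{u_k,M}(1)=o_k(1)$ with $\Phi'_{u_k,M}(1)=0$ one first checks $u_k\in\Gamma\setminus\{0\}\cap H^+$ and then, eliminating $a\|u_k\|^{2n}$, obtains
\[
B(u_k)-(2n-1)\int_\Om(|x|^{-\mu}\ast F(u_k))f(u_k)u_k\,dx+nb\|u_k\|^n-\la(2n-1-q)H(u_k)=o_k(1),
\]
which directly contradicts the \emph{uniform} lower bound $\Gamma_0>0$ of Lemma~\ref{inf}. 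This quantitative statement is the correct substitute for your pointwise lemma $N^0_{\la,M}=\{0\}$.

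For the numerator, your HLS/H\"older/Moser sketch is fine when $\beta<\frac{n}{n-1}$. In the critical case the paper does not bound $\langle S(u_k),w\rangle$ directly; it first uses the just-established boundedness of the denominator (hence of $R(u_k)$) together with the bound on $\|u_k\|$ to infer that each Choquard integral in $R(u_k)$ is uniformly bounded, and then applies Vitali convergence on $\phi\in C_c^\infty(\Om)$ to get $|\langle S(u_k),\phi\rangle|\leq C\|\phi\|$, extending by density.
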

\begin{proof}
Step 1: $\liminf_{k \rightarrow \infty} \|u_k\| >0.$\\
\ \ We know that from \eqref{sequence} that for large $k$, $H(u_k)\geq C >0$, so by using H\"older inequality we obtain
$C < \ H(u_k) \leq C_1 \|u_k\|^{q+1}$.\\
Step 2: We claim that
$$\liminf_{k \rightarrow \infty} (2n-1-q)a \|u_k\|^{2n} + b(n-1-q) \|u_k\|^n +q \int_{\Omega} (|x|^{-\mu}*F(u_k))f(u_k)u_k ~dx- B(u_k) >0.$$
Without loss of generality, we can assume that $u_k \in N^+_{\la,M}$ (if not replace $u_k$ by $t_1(u_k)u_k$). Arguing by contradiction, suppose that there exists a subsequence of $\{u_k\}$, still denoted by $\{u_k\}$, such that
$$0\leq (2n-1-q)a \|u_k\|^{2n} + b(n-1-q) \|u_k\|^n +q \int_{\Omega} (|x|^{-\mu}*F(u_k))f(u_k)u_k ~dx - B(u_k)= o_k(1).$$
From Step $1$ and the above equation we obtain that $\liminf_{k \rightarrow \infty} B(u_k) >0$ and $(2n-1-q)a \|u_k\|^{2n} + b(n-1-q) \|u_k\|^n \leq B(u_k)$ {\it i.e.} $u_k \in \Gamma\backslash \{0\} \ $ for all large $k.$\\
Since $u_k \in N^+_{\la,M}\backslash \{0\}$
\begin{align*}
-nb\|u_k\|^n+ \la (2n-1-q) H(u) +(2n-1) \int_{\Omega} (|x|^{-\mu}*F(u_k))f(u_k)u_k ~dx - B(u_k)&= o_k(1)
\end{align*}
which is a contradiction since $\Gamma_0 >0.$\\
Step 3: $\|\mathcal{J}^{'}_{\la,M}(u_k)\|_* \rightarrow 0$ as $k \rightarrow \infty$.\\
By using Lemma \ref{compl1}, there exists a differentiable function $\xi_k: B(0, \epsilon_k) \rightarrow \mathbb{R}$ for some $\epsilon_k >0$ such that $\xi_k(0)=1$ and $\xi_k(w)(u_k-w) \in N_{\la, M}\ \ \forall w \in B(0,\epsilon_k).$ Choose $0 < \rho < \epsilon_k$ and $f \in W_0^{1,n}(\Omega)$ such that $\|f\|=1.$ Let $w_\rho = \rho f.$ Then $\|w\|_{\rho}= \rho < \epsilon_k$ and define $\eta_{\rho}= \xi_k(w_{\rho})(u_k- w_{\rho})$. Then from the Taylor expansion and \eqref{kc3}, we obtain
\begin{equation}\label{qwer}
\begin{split}
\frac{1}{k} \|\eta_{\rho} - u_k\| &\geq \mathcal{J}_{\la ,M}(u_k) - \mathcal{J}_{\la ,M}(\eta_\rho) = \langle \mathcal{J}^{'}_{\la ,M}(\eta_\rho), u_k - \eta_{\rho} \rangle + o(\|u_k -\eta_\rho\|) \\
&= (1-\xi_k(w_{\rho}))\langle \mathcal{J}^{'}_{\la ,M}(\eta_\rho), u_k\rangle + \rho \xi_k(w_\rho) \langle \mathcal{J}^{'}_{\la ,M}(\eta_{\rho}),f\rangle + o(\|u_k -\eta_\rho\|).
\end{split}
\end{equation}
We also infer
\begin{align*}
\frac{1}{\rho} \|\eta_{\rho}- u_k\| &=\|\frac{(\xi_k(w_\rho)-1)}{\rho}u_k - \xi_k(w_\rho) f\| \rightarrow \|u_k \langle \xi^{'}_k(0),f \rangle -f \| \ \ \text{as} \ \ \rho \rightarrow 0.
\end{align*}
Since $u_k\in N_{\lambda,M}$, we have also $\frac{1- \xi_k(w_{\rho})}{\rho} \langle \mathcal{J}^{'}_{\la ,M}(\eta_\rho), u_k  \rangle \rightarrow 0 \ \ \text{as} \ \rho \rightarrow 0.$\\
Thus, dividing the expression in \eqref{qwer} by $\rho$ and doing $\rho\to 0^+$, we get
$$\langle \mathcal{J}^{'}_{\la ,M}(u_k), f \rangle \leq \frac{\|f\|}{k} (\|u_k\| \|\xi^{'}_k(0)\|_*+O(1))$$ which implies that $$\|\mathcal{J}^{'}_{\la,M}(u_k)\|_* \rightarrow 0 \ \ \text{as} \ \ k \rightarrow \infty$$
if $\|\xi^{'}_k(0)\|_*$ is bounded uniformly in $k$. To prove that, using \eqref{imp} and the boundedness of the sequence $(u_k)$ in $W^{1,n}_0(\Omega)$, we only need to show that for any $f\in W^{1,n}_0(\Omega)$, $\langle S(u_k),f\rangle$ is uniformly bounded in $k$.
For the subcritical case, {\it i.e.} $\beta \in (0,\frac{n}{n-1})$, it holds since for any $\epsilon>0$ and $q>1$, there exists $C_{\epsilon,q, \beta}>0$ such that
\begin{equation*}
\displaystyle\exp(q|t|^{\beta}) \leq C_{\epsilon,q,\beta}\exp(\epsilon|t|^{\frac{n}{n-1}}), \quad \forall t\in \R.
\end{equation*}
Then by Theorem \ref{TM-ineq} we obtain $\langle S(u_k), f \rangle \leq C \|f\|$ with $C>0$ independent of $k$. Consider  now the critical case, {\it i.e.} $\beta = \frac{n}{n-1}$. From the boundedness of $R(u_k)$ (see statement of Lemma~\ref{compl1}), it follows that
\begin{equation*}
\displaystyle\sup_k\int_{\Om}(|x|^{-\mu}*F(u_k))f(u_k)u_k~dx<\infty,
\end{equation*}
\begin{equation*}
\displaystyle\sup_k\int_{\Om}(|x|^{-\mu}*F(u_k))f'(u_k)u_k^2~dx<\infty
\end{equation*}
and
\begin{equation*}
\displaystyle\sup_k\int_{\Om}(|x|^{-\mu}*f(u_k)u_k)f(u_k)u_k~dx<\infty.
\end{equation*}
Then for any $\phi\in C^\infty_c(\Omega)$, we have by Vitali's convergence theorem and up to a subsequence
\begin{equation}\label{convergence-S}
\displaystyle\langle S(u_k),\phi\rangle\to \langle S(u_0),\phi\rangle
\end{equation}
where $u_0$ is the weak limit of $(u_k)_{k\in \N}$ in $W^{1,n}_0(\Omega)$. From \eqref{convergence-S}, we have that there exists $C>0$ independent of $k$ such that
\begin{equation}\label{conv-S-sob}
\displaystyle|\langle S(u_k),\phi\rangle|\leq C \|\phi\|.
\end{equation}
Using a density argument, we conclude that \eqref{conv-S-sob} holds for any $\phi\in W^{1,n}_0(\Omega)$. This completes the proof in the critical case.

\hfill{\QED}
\end{proof}
\begin{Theorem}\label{exis1}
Let $\beta < \frac{n}{n-1}$ and let $\la \in (0, \la_0)$ where $\la_0= \frac{bn}{(2n-1-q)l} M_0$. Then there exists a weak solution to $({\mathcal P}_{\lambda,M})$ $u_{\la} \in N^+_{\la,M} \cap H^+$ such that $\mathcal{J}_{\la, M}(u_\la)= \inf_{u\in N_{\la, M}\backslash \{0\}} \mathcal{J}_{\la, M}(u).$
\end{Theorem}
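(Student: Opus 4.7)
My plan is to use Ekeland's variational principle on $N_{\la,M}$ combined with the Nehari/fiber-map machinery already developed. Starting from the sequence $(u_k)\subset N_{\la,M}\setminus\{0\}$ produced by \eqref{Ekeland} and knowing by Proposition~\ref{j} that $\|\mathcal{J}'_{\la,M}(u_k)\|_*\to 0$, the coercivity of $\mathcal{J}_{\la,M}$ on $N_{\la,M}$ (Theorem~\ref{bdd}) gives boundedness of $(u_k)$ in $W^{1,n}_0(\Omega)$; hence, up to a subsequence, $u_k\rightharpoonup u_\la$ weakly in $W^{1,n}_0(\Omega)$, strongly in every $L^r(\Omega)$, and a.e.\ in $\Omega$. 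The quantitative lower bound $H(u_k)\geq C>0$ from \eqref{sequence} together with strong convergence in $L^{q+1}$ forces $H(u_\la)\geq C>0$, whence $u_\la\not\equiv 0$ and $u_\la\in H^+$.

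The next step is to exploit the subcritical assumption $\beta<\frac{n}{n-1}$ to obtain strong convergence of the Choquard nonlinearity. For any $\epsilon>0$ and $s\geq 1$ the inequality $\exp(s|t|^\beta)\leq C_{\epsilon,s,\beta}\exp(\epsilon|t|^{n/(n-1)})$ together with Theorem~\ref{TM-ineq} yields uniform bounds $\sup_k\|\exp(s|u_k|^\beta)\|_{L^r(\Omega)}<\infty$ for every $r\geq 1$. Combined with pointwise convergence, Vitali's theorem gives $f(u_k)\to f(u_\la)$ and $F(u_k)\to F(u_\la)$ strongly in every $L^r(\Omega)$. Then Proposition~\ref{HLS} provides $|x|^{-\mu}*F(u_k)\to|x|^{-\mu}*F(u_\la)$ in $L^\infty(\Omega)$, and hence $(|x|^{-\mu}*F(u_k))f(u_k)\to (|x|^{-\mu}*F(u_\la))f(u_\la)$ strongly in every $L^r(\Omega)$.

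The main obstacle, and the heart of the argument, is passing to the limit in the Kirchhoff $n$-Laplacian term. To handle it I would upgrade weak to strong convergence in $W^{1,n}_0(\Omega)$ via the $(S_+)$-property of the $n$-Laplacian. Testing $\langle \mathcal{J}'_{\la,M}(u_k),u_k-u_\la\rangle=o(1)$, the strong $L^{q+1}$-convergence annihilates the sublinear term and the previous step annihilates the Choquard term; what remains is
\[
m(\|u_k\|^n)\int_\Omega|\nabla u_k|^{n-2}\nabla u_k\cdot\nabla(u_k-u_\la)\,dx\longrightarrow 0.
\]
Since $m(\|u_k\|^n)\geq m_0>0$ by (m1), the $(S_+)$-property yields $u_k\to u_\la$ strongly in $W^{1,n}_0(\Omega)$. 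Consequently $m(\|u_k\|^n)\to m(\|u_\la\|^n)$ and one may pass to the limit in $\langle \mathcal{J}'_{\la,M}(u_k),\phi\rangle=o(\|\phi\|)$ to obtain $\langle \mathcal{J}'_{\la,M}(u_\la),\phi\rangle=0$ for every $\phi\in W^{1,n}_0(\Omega)$; in particular $u_\la\in N_{\la,M}\setminus\{0\}$ and continuity of $\mathcal{J}_{\la,M}$ on this convergent sequence gives $\mathcal{J}_{\la,M}(u_\la)=\theta$.

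Finally, I would locate $u_\la$ in $N^+_{\la,M}$. Since $N^0_{\la,M}=\{0\}$ for $\la\in(0,\la_0)$ and $u_\la\neq 0$, one has $u_\la\in N^+_{\la,M}\cup N^-_{\la,M}$. If $u_\la$ belonged to $N^-_{\la,M}$, then as $u_\la\in H^+$ the Case~2 fiber-map analysis produces $t_1=t_1(u_\la)\in(0,1)$ with $t_1 u_\la\in N^+_{\la,M}$ and $\Phi_{u_\la,M}$ strictly decreasing on $(t_1,1)$, so $\mathcal{J}_{\la,M}(t_1 u_\la)<\mathcal{J}_{\la,M}(u_\la)=\theta$, contradicting the definition of $\theta$. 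Therefore $u_\la\in N^+_{\la,M}\cap H^+$, which concludes the argument.
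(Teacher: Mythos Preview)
Your approach is essentially identical to the paper's: Ekeland sequence, Proposition~\ref{j} to make it Palais--Smale, subcritical growth plus Trudinger--Moser to control the Choquard term uniformly in $L^r$, then test against $u_k-u_\la$ and use the $(S_+)$-type monotonicity of the $n$-Laplacian (the paper writes out the elementary inequality $|a_1-a_2|^n\le 2^{n-2}(|a_1|^{n-2}a_1-|a_2|^{n-2}a_2)\cdot(a_1-a_2)$ explicitly) to upgrade to strong $W^{1,n}_0$-convergence; the fiber-map contradiction to exclude $N^-_{\la,M}$ is also the same.

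One slip in your write-up: in the final paragraph you say $\Phi_{u_\la,M}$ is \emph{strictly decreasing} on $(t_1,1)$, but in Case~2 the fiber map has a local minimum at $t_1$ and a local maximum at $t_2$, so $\Phi_{u_\la,M}$ is strictly \emph{increasing} on $(t_1,t_2)=(t_1,1)$. Had it been decreasing your inequality would go the wrong way; with the correct monotonicity you indeed get $\mathcal{J}_{\la,M}(t_1u_\la)=\Phi_{u_\la,M}(t_1)<\Phi_{u_\la,M}(1)=\mathcal{J}_{\la,M}(u_\la)=\theta$, which is the desired contradiction. Also, the $L^\infty$ convergence of the Riesz potential you invoke is correct but comes from $|x|^{-\mu}\in L^r(\Om)$ for $r<n/\mu$ and H\"older (as in \eqref{wk-sol7}) rather than from Proposition~\ref{HLS} itself.
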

\begin{proof}
Let $u_k$ be a minimizing sequence satisfying $\mathcal{J}_{\la, M}(u_k) \rightarrow \theta$ as $k \rightarrow \infty$ and $\mathcal{J}_{\la, M}(v) \geq \mathcal{J}_{\la, M}(u_k) - \frac{1}{k} \|u_k-v\|,   \ \forall v \in N_{\la}.$ Then by using Proposition \ref{j} we obtain $\{u_k\}$ is $(\text{PS})_{\theta}$ sequence. Then from Lemma \ref{kc-PS-bdd}
we get $\{u_k\}$ is a bounded sequence in $W_0^{1,n}(\Omega).$ Also there exists a subsequence of $\{u_k\}$ (denoted by same sequence) and $u_{\la}$ such that $u_k \rightharpoonup u_\la $ weakly in $W_0^{1,n}(\Omega)$ and $u_k \rightarrow u_{\la}$ strongly in $L^r(\Omega)\ $ for $ r \geq 1$ and $u_k \rightarrow u_\la$  a.e. in $\Omega.$ Then using $f(t) \leq C_{\epsilon,\beta} \exp(\epsilon t^{\frac{n}{n-1}})$ for $\epsilon>0$ small enough and  from Theorem \ref{TM-ineq},
we obtain that $f(u_k) $ and $(|x|^{-\mu}*F(u_k))$ are uniformly bounded in $L^q(\Omega)$ for all $q>1.$ Then again by Vitali's convergence theorem, we obtain
\begin{align*}
\left|\int_{\Omega} (|x|^{-\mu}*F(u_k))f(u_k) (u_k- u_\la)~dx\right| \rightarrow 0 \ \text{as} \ \ k \rightarrow \infty.
\end{align*}
and by Proposition \ref{j}, we have $\mathcal{J}^{'}_{\la, M}(u_k- u_\la) \rightarrow 0.$ Then we conclude that
\begin{equation}\label{KC11}
m(\|u_k\|^n) \int_{\Omega} |\nabla u_k|^{n-2} \nabla u_k. \nabla(u_k- u_{\la}) ~dx \rightarrow 0
\end{equation}
On the other hand, using $u_k \rightarrow u_\la$ weakly and by boundedness of $m(\|u_k\|^n)$ we have
\begin{equation}\label{KC12}
m(\|u_k\|^n) \int_{\Omega} |\nabla u_\la|^{n-2} \nabla u_\la. \nabla(u_k- u_{\la}) ~dx \rightarrow 0.
\end{equation}
Substracting \eqref{KC12} from \eqref{KC11}, we get,
\begin{equation*}
m(\|u_k\|^n) \int_{\Omega} (|\nabla u_k|^{n-2} \nabla u_k- |\nabla u_\la|^{n-2} \nabla u_\la). \nabla(u_k- u_{\la}) ~dx \rightarrow 0.
\end{equation*}
Now by using this and following inequality,
\begin{equation*}
|a_1-a_2|^n \leq 2^{n-2} (|a|_1^{n-2} a_1 -|a_2|^{n-2} a_2)(a_1-a_2) \ \text{for all} \ \ a_1, a_2\in \mathbb{R}^n
\end{equation*}
with $a_1= \nabla u_k$ and $a_2= \nabla u_\la$, we obtain
\begin{equation*}
m(\|u_k\|^n) \int_{\Omega} |\nabla u_k - \nabla u_\la|^n ~dx \rightarrow 0 \ \ \text{as} \ k \rightarrow  \infty.
\end{equation*}
Since $m(t) \geq b$, then we obtain $u_k \rightarrow u_{\la}$ strongly in $W_0^{1,n}(\Omega)$ and by Lemma \ref{kc-ws}
\begin{equation*}
\int_{\Omega} (|x|^{-\mu}*F(u_k))f(u_k) \phi ~dx \rightarrow \int_{\Omega} (|x|^{\mu}*F(u_\la))f(u_\la) \phi ~dx
\end{equation*}
and also
\begin{equation*}
\int_{\Omega} h(x) u_k^{q-1} u_k \phi ~dx \rightarrow \int_{\Omega} h(x) u_\la^{q-1} u_\la \phi ~dx
\end{equation*}
for all $\phi \in W_0^{1,n}(\Omega)$.
Therefore, $u_\la$ satisfies $(\mathcal{P}_{\la,M})$ in weak sense and hence $u_k \in N_{\la, M}.$ Moreover, $\theta \leq \mathcal{J}_{\la,M}(u_\la) \leq \liminf_{k \rightarrow \infty} \mathcal{J}_{\la, M}(u_k)= \theta.$ Hence $u_\la$ is a minimizer for $\mathcal{J}_{\la, M}$ in $N_{\la, M}$.\\
Using \eqref{sequence}, we have $\int_{\Omega} h(x) |u_{\la}|^{q+1} >0$, then there exists $t_1(u_\la)$ such that $t_1(u_\la) u_\la \in N^{+}_{\la, M}$. We now claim that $t_1(u_\la)=1$ {\it i.e.} $u_\la \in N^+_{\la, M}.$ Suppose that $t_1(u_\la)<1$ and then $t_2(u_\la)=1$ and $u_\la \in N^{-}_{\la, M}.$ Now $\mathcal{J}_{\la,M}(t_1(u_\la)u_\la) < \mathcal{J}_{\la, M}(u_\la) \leq \theta $ which yields a contradiction, since $t_1(u_{\la}) u_\la \in N_{\la,M}.$ Thus, $u_\lambda$ is non negative and nontrivial. From the strong comparison principle, we get $u_\lambda>0$ in $\Om$.
\hfill{\QED}
\end{proof}
\begin{Theorem}\label{exis2}
Let $\beta < \frac{n}{n-1}$ and let $\la \in (0, \la_0)$ where $\la_0= \frac{bn}{(2n-1-q)l} M_0$. Then $u_{\la} \in N^+_{\la,M} \cap H^+$ is a non-negative local minimum for $\mathcal{J}_{\la, M}(u_\la)$ in $W_0^{1,n}(\Omega).$
\end{Theorem}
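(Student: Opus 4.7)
The plan is to prove $u_\la$ is a local minimum in $W^{1,n}_0(\Om)$ by contradiction: suppose there exists a sequence $\{v_k\}\subset W^{1,n}_0(\Om)$ with $v_k\to u_\la$ strongly and $\mathcal{J}_{\la,M}(v_k)<\mathcal{J}_{\la,M}(u_\la)=\theta$. The main idea is to project each $v_k$ onto the Nehari manifold via its fiber map, compare with the minimum value $\theta$, and then exploit the geometry of $\Phi_{v_k,M}$ around $t=1$ to contradict the supposed strict inequality.

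First I would observe that since $u_\la\in H^+$ and $H$ is continuous on $W^{1,n}_0(\Om)$ (by the compact Sobolev embedding into $L^{q+1}(\Om)$, since $h\in L^r(\Om)$ with $r=\tfrac{n}{n-q-1}$), for $v$ sufficiently close to $u_\la$ we also have $v\in H^+$. Hence the analysis of Case 2 in Section 4.1 applies to $v$: there exist $0<t_1(v)<t_*(v)<t_2(v)$ with $t_1(v)v\in N^+_{\la,M}$, $t_2(v)v\in N^-_{\la,M}$, such that $\Phi_{v,M}$ is strictly decreasing on $[0,t_1(v)]$, strictly increasing on $[t_1(v),t_2(v)]$ and strictly decreasing on $[t_2(v),\infty)$.

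Next I would establish continuity of $t_1(v)$ at $u_\la$. Since $u_\la\in N^+_{\la,M}$, we have $t_1(u_\la)=1$ and the non-degeneracy $\Phi''_{u_\la,M}(1)>0$. Setting $G(t,v):=\Phi'_{v,M}(t)$, the implicit function theorem (applied exactly as in Lemma~\ref{compl1}, whose hypotheses are met because the nonlinearity is subcritical, $\beta<\frac{n}{n-1}$) produces a $C^1$ function $v\mapsto t_1(v)$ in a neighborhood of $u_\la$ with $t_1(u_\la)=1$, so $t_1(v_k)\to 1$ as $k\to\infty$. Similarly $t_2(v_k)\to t_2(u_\la)>1$, so for $k$ large both $t_2(v_k)>1$ and $t_1(v_k)$ is arbitrarily close to $1$.

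Finally I would reach the contradiction by comparing $\mathcal{J}_{\la,M}(v_k)=\Phi_{v_k,M}(1)$ with $\mathcal{J}_{\la,M}(t_1(v_k)v_k)=\Phi_{v_k,M}(t_1(v_k))$. Using the monotonicity structure above: if $t_1(v_k)\le 1\le t_2(v_k)$, then $\Phi_{v_k,M}$ is increasing on $[t_1(v_k),1]$, so $\Phi_{v_k,M}(1)\ge \Phi_{v_k,M}(t_1(v_k))$; if instead $t_1(v_k)>1$, then $\Phi_{v_k,M}$ is decreasing on $[0,t_1(v_k)]$, so again $\Phi_{v_k,M}(1)\ge \Phi_{v_k,M}(t_1(v_k))$. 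Since $t_1(v_k)v_k\in N_{\la,M}$ and $u_\la$ minimizes $\mathcal{J}_{\la,M}$ on $N_{\la,M}\setminus\{0\}$ (Theorem~\ref{exis1}), this yields
\[
\mathcal{J}_{\la,M}(v_k)=\Phi_{v_k,M}(1)\ \ge\ \Phi_{v_k,M}(t_1(v_k))=\mathcal{J}_{\la,M}(t_1(v_k)v_k)\ \ge\ \theta,
\]
contradicting $\mathcal{J}_{\la,M}(v_k)<\theta$. The non-negativity of $u_\la$ is inherited from Theorem~\ref{exis1}. The delicate point I expect to need the most care is the continuous dependence $v\mapsto t_1(v)$: one must verify the applicability of the implicit function theorem near $(1,u_\la)$, which crucially relies on the strict inequality $\Phi''_{u_\la,M}(1)>0$ (since $u_\la\in N^+_{\la,M}$, not $N^0_{\la,M}$) and on the $C^1$ regularity of $(t,v)\mapsto \Phi'_{v,M}(t)$, ensured in the subcritical regime $\beta<\frac{n}{n-1}$ by the Trudinger-Moser inequality and Vitali's theorem, exactly as used in Proposition~\ref{j}.
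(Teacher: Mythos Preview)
Your proposal is correct and follows essentially the same approach as the paper: project nearby points onto $N_{\la,M}$ via the fiber map, use the shape of $\Phi_{v,M}$ to compare $\Phi_{v,M}(1)$ with $\Phi_{v,M}(t_1(v))$, and invoke the minimality of $u_\la$ on $N_{\la,M}$ from Theorem~\ref{exis1}. The only cosmetic differences are that the paper argues directly (not by contradiction) and uses continuity of $u\mapsto t_*(u)$ (the critical point of $\psi_u$) rather than of $u\mapsto t_2(u)$ to guarantee $1<t_*(u_\la-w)<t_2(u_\la-w)$; both routes rely on the implicit function theorem exactly as in Lemma~\ref{compl1}.
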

\begin{proof}
Since $u_\la \in N^+_{\la,M} \cap H^+$ then we have a $t_*(u_\la)$ such that $1=t_1(u_\la) < t_*(u_\la).$ Hence by the continuity of $u \rightarrow t_*(u),$ given $\epsilon >0$ there exists $\delta_{\epsilon} >0$ such that $$(1+\epsilon) <t_*(u_{\la}-w) \ \ \ \text{for all} \ \|w\| < \delta_{\epsilon}$$ and from Lemma \ref{compl1} we have, for $\delta>0$ small enough, a continuously differentiable map $t: B(0, \delta) \rightarrow \mathbb{R}^+$ such that $t(w)(u_{\la}-w) \in N_{\la, M}, t(0)=1.$ Then we have $$t_1(u_{\la}-w)=t(w)< 1+\epsilon < t_*(u_{\la}-w)$$ for $\delta$ small enough. Since $t_*(u_{\la}-w) >1$ for all $\|w\| < \delta ,$ we obtain  $$\mathcal{J}_{\la, M}(u_\la) \leq \mathcal{J}_{\la, M}(t_1(u_\la-w) (u_\la-w)) \leq \mathcal{J}_{\la, M}(u_\la -w),\; \mbox{if }\|w\| < \delta $$ which implies that $u_\la$ is a local minimizer for $\mathcal{J}_{\la, M}.$

\hfill{\QED}
\end{proof}
\begin{Theorem}\label{exis3}
Let $\beta < \frac{n}{n-1}$ and let $\la \in (0, \la_0)$ where $\la_0= \frac{bn}{(2n-1-q)l} M_0$. Then $\mathcal{J}_{\la, M}$ achieve its minimizers on $N^-_{\la, M}.$
\end{Theorem}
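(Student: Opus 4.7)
\noindent\textbf{Proof proposal for Theorem~\ref{exis3}.}
Set $\theta^{-}:=\inf_{u\in N^{-}_{\la,M}}\mathcal{J}_{\la,M}(u)$. By Theorem~\ref{bdd}, $\mathcal{J}_{\la,M}$ is coercive and bounded below on $N_{\la,M}$, hence on $N^{-}_{\la,M}$, so $\theta^{-}>-\infty$. Before running a minimization argument I would first record two preliminary facts. First, the fibering analysis of Case~1 and Case~2 shows that for $u\in N^{-}_{\la,M}$, the point $t=1$ is the unique global maximum of $\Phi_{u,M}$ on $(0,\infty)$; combined with the subcritical exponential estimates used in Lemma~\ref{inf} this yields a uniform lower bound $\|u\|\geq c_{0}>0$ on $N^{-}_{\la,M}$. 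Second, by choosing $\phi\in W^{1,n}_{0}(\Om)$ supported where $h\leq 0$ (e.g.\ a Moser-type bump translated suitably), one has $\phi\in H^{-}_{0}$ and $t_{\phi}\phi\in N^{-}_{\la,M}$ for the unique $t_{\phi}>0$ given by Case~1, so $\theta^{-}$ is finite and attained along a genuine minimizing sequence.

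Next I would apply Ekeland's variational principle on $N^{-}_{\la,M}$ to produce a sequence $\{u_{k}\}\subset N^{-}_{\la,M}$ with $\mathcal{J}_{\la,M}(u_{k})\to\theta^{-}$ and $\mathcal{J}_{\la,M}(v)\geq \mathcal{J}_{\la,M}(u_{k})-\tfrac{1}{k}\|u_{k}-v\|$ for every $v\in N^{-}_{\la,M}$. Using the local $C^{1}$ parametrization of $N^{-}_{\la,M}$ through $u_{k}$ supplied by Lemma~\ref{compli2}, I would repeat verbatim the argument of Proposition~\ref{j} (picking $w_{\rho}=\rho f$, Taylor expanding, dividing by $\rho$ and letting $\rho\to 0^{+}$) to conclude $\|\mathcal{J}'_{\la,M}(u_{k})\|_{*}\to 0$. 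Thus $\{u_{k}\}$ is a Palais--Smale sequence for $\mathcal{J}_{\la,M}$ at level $\theta^{-}$.

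Coercivity (Theorem~\ref{bdd}) gives boundedness of $\{u_{k}\}$ in $W^{1,n}_{0}(\Om)$, so up to a subsequence $u_{k}\rightharpoonup u^{*}$ weakly and $u_{k}\to u^{*}$ in every $L^{r}(\Om)$, $r\geq 1$. To promote this to strong convergence in $W^{1,n}_{0}(\Om)$ I follow the subcritical argument of Theorem~\ref{exis1}: since $\beta<\tfrac{n}{n-1}$, for every $\varepsilon>0$ and $q>1$ there is $C_{\varepsilon,q,\beta}>0$ with $\exp(q|t|^{\beta})\leq C_{\varepsilon,q,\beta}\exp(\varepsilon|t|^{n/(n-1)})$, so Theorem~\ref{TM-ineq} yields uniform $L^{q}$-bounds on $f(u_{k})$ and on $|x|^{-\mu}\ast F(u_{k})$ for all $q>1$. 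Vitali's theorem then gives
\[
\int_{\Om}(|x|^{-\mu}\ast F(u_{k}))f(u_{k})(u_{k}-u^{*})~dx\to 0,\qquad \int_{\Om}h(x)|u_{k}|^{q-1}u_{k}(u_{k}-u^{*})~dx\to 0,
\]
and testing $\mathcal{J}'_{\la,M}(u_{k})\to 0$ against $u_{k}-u^{*}$, together with the lower bound $m(t)\geq b>0$ and the standard Simon-type inequality for the $n$-Laplacian, produces $u_{k}\to u^{*}$ strongly in $W^{1,n}_{0}(\Om)$.

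The main obstacle, as always for $N^{-}$-minimizations, is to make sure the limit lands in $N^{-}_{\la,M}$ rather than drifting to $N^{0}_{\la,M}$ or leaving the component. Strong convergence and the uniform bound $\|u_{k}\|\geq c_{0}$ give $\|u^{*}\|\geq c_{0}>0$, so $u^{*}\not\equiv 0$; passing to the limit in $\Phi'_{u_{k},M}(1)=0$ and $\Phi''_{u_{k},M}(1)<0$ yields $u^{*}\in N_{\la,M}$ with $\Phi''_{u^{*},M}(1)\leq 0$. Since Lemma on $N^{0}_{\la,M}=\{0\}$ (proved in the preceding subsection for $\la\in(0,\la_{0})$) rules out $\Phi''_{u^{*},M}(1)=0$, we conclude $u^{*}\in N^{-}_{\la,M}$, and by continuity $\mathcal{J}_{\la,M}(u^{*})=\theta^{-}$. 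Finally, since $\mathcal{J}_{\la,M}(|u^{*}|)=\mathcal{J}_{\la,M}(u^{*})$ one may take $u^{*}\geq 0$; elliptic regularity (as in the proof of Theorem~\ref{kc-mt-1}) and the strong maximum principle then give $u^{*}>0$ in $\Om$, completing the proof.
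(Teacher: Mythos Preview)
Your proposal follows essentially the same route as the paper: Ekeland's principle on $N^{-}_{\la,M}$, the $\xi^{-}$-parametrization of Lemma~\ref{compli2}, a Proposition~\ref{j}-type argument to obtain a Palais--Smale sequence, and then the subcritical compactness argument of Theorem~\ref{exis1}. Two points deserve attention, however.

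First, your construction of a nontrivial $\phi\in H_{0}^{-}$ ``supported where $h\le 0$'' is not justified by the standing hypotheses: only $h^{+}\not\equiv 0$ is assumed, so $h$ may well be positive a.e.\ and $H_{0}^{-}\setminus\{0\}$ may be empty. This step is in any case unnecessary, since the Case~2 fibering analysis already gives $t_{2}(u)u\in N^{-}_{\la,M}$ for every $u\in H^{+}$, and the paper proceeds without it.

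Second, you invoke ``repeat verbatim the argument of Proposition~\ref{j}''. The delicate point there is the uniform lower bound on the denominator in the formula for $\langle(\xi_{k}^{-})'(0),w\rangle$, i.e.\ on $|\Phi''_{u_{k},M}(1)|$. Proposition~\ref{j} handles this for the $N^{+}$-sequence by contradiction via $\Gamma_{0}>0$; for $N^{-}_{\la,M}$ the paper instead first checks that $N^{-}_{\la,M}\subset\Gamma$ (so $\inf B(u)>0$) and derives the estimate \eqref{xi-argument}, and also uses the norm lower bound to conclude that $N^{-}_{\la,M}\setminus\{0\}$ is closed before applying Ekeland. Your outline covers these points implicitly (via the norm lower bound and $N^{0}_{\la,M}=\{0\}$), but the denominator control is the one step that is not literally ``verbatim'' from Proposition~\ref{j} and should be stated.
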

\begin{proof}
Let $u \in N^-_{\la,M}$. Then
\begin{align*}
(2n-1) a \|u\|^{2n} + (n-1)b \|u\|^n- \la qH(u)- \left(\int_{\Omega} (|x|^{-\mu}*f(u)u)f(u).u + \int_{\Omega} (|x|^{-\mu}*F(u)) f'(u) u^2\right) <0.
\end{align*}
Then \eqref{KC2} implies that
\begin{equation}\label{4.39}
\begin{split}
(2n-1-q) a \|u\|^{2n} +(n-1-q) &b \|u\|^n +q \int_{\Omega} (|x|^{-\mu}*F(u)) f(u).u \\
&- \left(\int_{\Omega} (|x|^{-\mu}*f(u)u)f(u).u + \int_{\Omega} (|x|^{-\mu}*F(u)) f'(u) u^2\right) <0.
\end{split}
\end{equation}
Using $p+1 >2n$ it is easy to deduce from  \eqref{4.39} that $\exists\ c>0,\ \|u\| \geq c>0$ for any $u \in N^-_{\la,M}\backslash \{0\}$ from which it follows that $N^-_{\la,M}\backslash \{0\}$ is a closed set. Also as in Lemma \ref{inf} we can prove that $N_{\la, M}^- \subset \Gamma$ and then $\inf_{u \in N^-_{\la, M}\backslash \{0\}} B(u) \geq \tilde{c} >0.$ Therefore, for $\la < \la_0$ small enough,
\begin{equation}\label{xi-argument}
\displaystyle\inf_{u\in N^-_{\la,M}\backslash \{0\}}B(u)+nb\|u\|-(2n-1-q)\lambda H(u)-(2n-1)\int_\Om |x|^{-\mu}*F(u)f(u)u>0.
\end{equation}
Now taking $\theta^- = \min_{u \in N^-_{\la, M}\backslash \{0\}} \mathcal{J}_{\la, M}(u) > - \infty.$ From Ekeland variational principle, there exist $\{v_k\}_{k\in \mathbb{N}}$ a non-negative minimizing sequence such that
$$\mathcal{J}_{\la, M}(v_k) \leq \inf_{u \in N^-_{\la, M}} \mathcal{J}_{\la,M}(u) + \frac{1}{k} \ \text{and} \ \mathcal{J}_{\la, M}(u) \geq \mathcal{J}_{\la, M}(v_k) - \frac{1}{k}\|v_k-u\|\ \ \forall \ u \in N^-_{\la, M}.$$ From $\mathcal{J}_{\la,M}(v_k) \to \theta^-$ as $k\to\infty$ and $v_k \in N_{\la, M}$, it is easy to prove  that $\|v_k\| \leq C$ (as in Lemma \ref{kc-PS-bdd}).
Indeed,
\begin{align*}
\left| a\|v_k\|^{2n}+b \|v_k\|^n -\lambda H(v_k)- \int_{\Omega}(|x|^{-\mu}* F(v_k))f(v_k)v_k ~dx\right| =o(\|v_k\|)
\end{align*}
and
\begin{align*}
& C+o(\|v_k\|)\geq \mathcal{J}_{\la,M}(v_k)- \frac{1}{2n} \langle \mathcal{J}^{'}_{\la, M}(v_k),v_k \rangle
\geq \frac{b}{2n} \|v_k\|^{2n} -C(\lambda)\|v_k\|^{q+1}
\end{align*}
imply $\|v_k\| \leq C.$ Thus we get $\|S(v_k)\|_* \leq C_1$ and from \eqref{xi-argument} we have $\|\xi_k^-(0)\|_* \leq C_2.$ Now the rest of the proof can be done as in the proof of Theorem \ref{exis1} with the help of Lemma~\ref{compli2}.
\hfill{\QED}
\end{proof}\\ \\
{\it Proof of Theorem \ref{first} for $\beta \in \bigg(1, \frac{n}{n-1}\bigg)$ }: The proof follows from Lemma \ref{exis1} and Theorem \ref{exis2}.
\hfill{\QED}

Now we establish the following compactness result in the critical case.
\begin{Lemma}\label{compactness}
There exists $C=C(p,q,n)>0$ such that for any $(u_k)_{k\in\N} \subset W_0^{1,n}(\Omega)$ satisfying
$$\mathcal{J}^{'}_{\la, M}(u_k) \rightarrow 0\ \ \text{and}\ \ \mathcal{J}_{\la, M}(u_k) \rightarrow c \leq \frac{m_0}{2n} \left(\frac{2n-\mu}{2n} \alpha_n\right)^{n-1}- C \la^{\frac{2(p+2)}{2p+3-q}} \quad\mbox{as }k\to\infty$$
is relatively compact in $W_0^{1,n}(\Omega).$
\end{Lemma}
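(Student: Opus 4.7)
The plan is to follow, with suitable modifications, the strategy used for problem $(KC)$ in Section 3 (Lemmas~\ref{kc-PS-bdd}, \ref{wk-sol}, \ref{kc-ws}, \ref{PS-ws}) together with the concentration-compactness argument appearing at the end of the proof of Theorem~\ref{kc-mt-1}. The key new ingredient is the explicit $\lambda$-correction $-C\lambda^{\frac{2(p+2)}{2p+3-q}}$ in the critical level, which must be chosen so as to absorb the contribution of the sublinear perturbation $\lambda h(x)|u|^{q-1}u$ through a Young-inequality trade-off.

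First, I would prove that any such Palais Smale sequence $(u_k)$ is bounded in $W_0^{1,n}(\Omega)$. Combining $\mathcal{J}_{\la,M}(u_k) - \tfrac{1}{2(p+2)}\langle\mathcal{J}'_{\la,M}(u_k),u_k\rangle$ with $(m3)'$ and the fact that $(p+2)F(t)\le f(t)t$, I get a lower bound of the form $C_1\|u_k\|^n - C_2\lambda\|u_k\|^{q+1} \le c+o(1)(1+\|u_k\|)$; since $q+1<n$, boundedness follows. Passing to a subsequence, $u_k\rightharpoonup u_\la$ weakly in $W^{1,n}_0(\Omega)$, strongly in every $L^r(\Omega)$ and a.e.\ in $\Omega$. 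Using the arguments of Lemmas~\ref{wk-sol}--\ref{PS-ws} (semigroup property of the Riesz potential, Vitali's theorem, Hardy--Littlewood--Sobolev), the weak limit $u_\la$ is a weak solution of $(\mathcal{P}_{\la,M})$.

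The hard step is then to exclude loss of compactness. Assume by contradiction $\tau^n := \lim_k\|u_k\|^n > \|u_\la\|^n$. Plugging $u_\la$ as a test function in its own equation and using Young's inequality in the form $\lambda\int h|u_\la|^{q+1}\le \varepsilon\|u_\la\|^n + C_\varepsilon\lambda^{\frac{n}{n-q-1}}$, together with the Choquard-energy identity and the level estimate, I will derive a bound of the type
\begin{equation*}
M(\tau^n) \;<\; M\!\left(\left(\tfrac{2n-\mu}{2n}\alpha_n\right)^{n-1}\right) + M(\|u_\la\|^n),
\end{equation*}
provided the constant $C$ in the statement is taken large enough depending on $p,q,n$. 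By $(m1)$ and monotonicity of $M$ this yields
\begin{equation*}
\tau^n \;<\; \frac{\bigl(\tfrac{2n-\mu}{2n}\alpha_n\bigr)^{n-1}}{1-\|v_\la\|^n},\qquad v_\la := u_\la/\tau,
\end{equation*}
so that the normalized sequence $v_k = u_k/\|u_k\|$ satisfies $\|v_k\|=1$ and $v_k\rightharpoonup v_\la$ with $\|v_\la\|<1$. Choosing $q_0>1$ close enough to $1$, one then has $\tfrac{2nq_0}{2n-\mu}\|u_k\|^{n/(n-1)} < \alpha_n(1-\|v_\la\|^n)^{-1/(n-1)}$ for large $k$. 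The precise exponent $\frac{2(p+2)}{2p+3-q}$ appears exactly while balancing the Young term against the Choquard term that has nonlinearity of order $p+2$ (on one factor) and $2(p+2)$ (in the doubly-critical $L^{2n/(2n-\mu)}$ norm).

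Fourth, applying Lemma~\ref{Lions-lem} with this choice of $q_0$ gives
\begin{equation*}
\sup_k \int_\Omega \exp\!\left(\tfrac{2nq_0}{2n-\mu}|u_k|^{\frac{n}{n-1}}\right)dx < +\infty,
\end{equation*}
and combining Hardy--Littlewood--Sobolev with H\"older and Vitali (as in \eqref{wk-sol4}--\eqref{wk-sol6} and in equations \eqref{3.25}--\eqref{3.27}) yields
\begin{equation*}
\int_\Omega\bigl(|x|^{-\mu}\!\ast\! F(u_k)\bigr)f(u_k)u_k\,dx \;\longrightarrow\; \int_\Omega\bigl(|x|^{-\mu}\!\ast\! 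F(u_\la)\bigr)f(u_\la)u_\la\,dx
\end{equation*}
and the analogous convergence for the sublinear term is immediate from $L^r$-convergence. Since $\langle\mathcal{J}'_{\la,M}(u_k),u_k\rangle\to 0$ and $u_\la$ solves $(\mathcal{P}_{\la,M})$, passing to the limit gives $m(\tau^n)\tau^n = m(\|u_\la\|^n)\|u_\la\|^n$. By $(m3)$ the map $t\mapsto m(t)t$ is strictly increasing (using $(m1)$), forcing $\tau = \|u_\la\|$, a contradiction. Therefore $\|u_k\|\to\|u_\la\|$, and since $W_0^{1,n}(\Omega)$ is uniformly convex, weak convergence plus convergence of norms yields strong convergence. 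The main obstacle is thus the third step: producing the right trade-off that identifies the exponent $\frac{2(p+2)}{2p+3-q}$ and keeps $\tau^{n/(n-1)}$ strictly below the Lions threshold.
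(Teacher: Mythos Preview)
Your overall architecture (boundedness, weak limit, Lions' higher-integrability Lemma~\ref{Lions-lem} to rule out a norm gap) is a legitimate route, but it is \emph{not} the one the paper follows, and the place where you are vaguest is precisely where the paper's argument is sharp.

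The paper does not use Lemma~\ref{Lions-lem} here. Instead it runs a measure-theoretic concentration argument in the spirit of Lemma~\ref{wk-sol}: pass to Radon-measure limits $|\nabla u_k|^n\to\nu_1$ and $(|x|^{-\mu}\!*\!F(u_k))f(u_k)u_k\to\nu_2$, define the concentration set $A=\{x:\nu_1(B_r(x)\cap\overline\Omega)\ge(\tfrac{2n-\mu}{2n}\alpha_n)^{n-1}\ \forall r>0\}$, and show that if $A\neq\emptyset$ then testing $\mathcal J'_{\la,M}(u_k)$ against $u_k\psi_\delta$ with cut-offs near $A$ forces $\nu_2(A)\ge m_0(\tfrac{2n-\mu}{2n}\alpha_n)^{n-1}$. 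Plugging this into $n\mathcal J_{\la,M}(u_k)-\tfrac12\langle\mathcal J'_{\la,M}(u_k),u_k\rangle$ and passing to the limit yields
\[
c\ \ge\ \frac{m_0}{2n}\Bigl(\tfrac{2n-\mu}{2n}\alpha_n\Bigr)^{n-1}
-\la\,\tfrac{2n-1-q}{2n(q+1)}\|h\|_{L^{r'}}\,t^{\frac{q+1}{2(p+2)}}
+\tfrac{2p+2-2n}{2n(2p+2)(p+2)}\,t
\]
with $t=\bigl(\int_\Omega |u|^{\frac{2n(p+2)}{2n-\mu}}\bigr)^{\frac{2n-\mu}{n}}$. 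Minimising the right-hand side in $t$ is what produces the exponent $\frac{2(p+2)}{2p+3-q}$, and the resulting lower bound contradicts the assumed level. This is how $C=C(p,q,n)$ is pinned down.

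Your proposal contains a concrete gap at exactly this point. The Young inequality you write, $\lambda\int h|u_\la|^{q+1}\le \varepsilon\|u_\la\|^n+C_\varepsilon\lambda^{n/(n-q-1)}$, balances the sublinear term against $\|u_\la\|^n$ and yields the exponent $\tfrac{n}{n-q-1}$, not $\tfrac{2(p+2)}{2p+3-q}$. You then assert that the correct exponent ``appears while balancing the Young term against the Choquard term,'' but you never carry this out; in fact, within your Lions-lemma scheme (bounding $M(\tau^n)$ by $M(\,\cdot\,)+M(\|u_\la\|^n)$) it is not obvious where the Choquard energy of $u_\la$ enters with the right sign to allow that balancing. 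In the paper's argument the Choquard term enters naturally with a \emph{favourable} sign via $\tfrac12\int(|x|^{-\mu}\!*\!F(u))(f(u)u-nF(u))\ge0$ and the lower-order power $\int|u|^{2(p+2)\cdot\frac{n}{2n-\mu}}$, which is why the one-variable optimisation in $t$ succeeds. If you want to keep the Lions route, you must replace your Young step by an inequality of the form $\la H(u_\la)\le \varepsilon\bigl(\int|u_\la|^{\frac{2n(p+2)}{2n-\mu}}\bigr)^{\frac{2n-\mu}{n}}+C_\varepsilon\lambda^{\frac{2(p+2)}{2p+3-q}}$ (H\"older with $r'=(1-\tfrac{(q+1)(2n-\mu)}{2n(p+2)})^{-1}$ and then Young with conjugate exponents $\tfrac{2(p+2)}{q+1}$ and $\tfrac{2(p+2)}{2p+3-q}$), and then explain how the $\varepsilon$-piece is absorbed. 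Without this, your threshold does not match the statement.
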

\begin{proof}
As in Lemma \ref{kc-PS-bdd} we can prove that $(u_k)_{k\in\N}$ is bounded in $W_0^{1,n}(\Omega)$ and up to a subsequence $u_k \rightarrow u $ in $L^{\alpha}(\Omega)$ for all $\alpha \geq 1$, $u_k(x) \rightarrow u $ a.e in $\Omega$ and $\nabla u_k \rightarrow \nabla u$ a.e. in $\Omega$ and weakly in $L^n(\Omega).$
Also still up to a subsequence, there exist radon measures $\nu_1$ and $\nu_2$ such that $|\nabla u_k|^n \to \nu_1$ and $(|x|^{-\mu}*F(u_k))f(u_k)u_k \to \nu_2$ weakly as $k\rightarrow \infty$.\\
Let $B=\{x \in \overline{\Omega}: \exists \ r= r_x>0, \nu_1(B_r \cap \Omega) < \left(\frac{2n-\mu}{2n} \alpha_n\right)^{n-1}\}$ and let $A=\overline{\Omega}\backslash B.$ Then by Lemma \ref{wk-sol} we can infer that $A$ is a finite set, say $\{x_1, x_2, \dots, x_n\}.$ Since $\mathcal{J}^{'}_{\la, M}(u_k) \rightarrow 0$ and since $(u_k)_{k\in\N}$ is bounded in $W^{1,n}_0(\Om)$, we have that $\forall \ \phi \in C_c^{\infty}(\Omega)$,
\begin{equation}\label{wksol30}
\begin{split}
 0= \lim_{k \to \infty} \langle \mathcal{J}^{'}_{\la, M}(u_k), \phi \rangle &= \lim_{k \to \infty}\bigg[ m(\|u_k\|^n) \int_{\Omega} |\nabla u_k|^{n-2} \nabla u_k. \nabla \phi ~dx-\la \int_{\Omega} h(x) |u_k|^{q-1} u_k \phi ~dx \\
&+ \int_{\Omega} (|x|^{-\mu}*F(u_k)) f(u_k) \phi ~dx\bigg],
\end{split}
\end{equation}
\begin{equation}\label{wksol31}
\begin{split}
 0= \lim_{k \to \infty} \langle \mathcal{J}^{'}_{\la, M}(u_k), u_k \phi \rangle &= \lim_{k \to \infty} \bigg[m(\|u_k\|^n) \int_{\Omega} (|\nabla u_k|^{n-2} \nabla u_k. \nabla \phi u_k ~dx + |\nabla u_k|^n \phi) -\la \int_{\Omega} h(x) |u_k|^{q+1} \phi ~dx \\
 &+ \int_{\Omega} (|x|^{-\mu}*F(u_k)) f(u_k) u_k \phi ~dx \bigg],
\end{split}
\end{equation}
\begin{equation}\label{wksol32}
\begin{split}
 0= \lim_{k \to \infty} &\langle \mathcal{J}^{'}_{\la, M}(u_k), u \phi \rangle = \lim_{k \to \infty} \bigg[m(\|u_k\|^n) \int_{\Omega}( |\nabla u_k|^{n-2} \nabla u_k. \nabla \phi u ~dx +  |\nabla u_k|^{n-2} \nabla u_k. \nabla u  \phi ~dx) \\
& +  \int_{\Omega} (|x|^{-\mu}*F(u_k)) f(u_k) u \phi ~dx \bigg] - \la \int_{\Omega} h(x) |u|^{q} \phi ~dx.
\end{split}
\end{equation}
Substituting \eqref{wksol32} in \eqref{wksol31} and taking into account \eqref{wksol30}, we get $\forall \ \phi \in C_c^{\infty}(\Omega)$
\begin{equation}\label{wksol35}
\begin{split}
\int_{\Omega} (|x|^{-\mu}*F(u_k)) f(u_k) u_k \phi &=\lim_{k \to \infty} m(\|u_k\|^n) \int_{\Omega} |\nabla u_k|^n - \nabla u_k|^{n-2} \nabla u_k. \nabla u  \phi ~dx\\
&+ \int_{\Omega} (|x|^{-\mu}*F(u)) f(u) u \phi ~dx +o_k(1).
\end{split}
\end{equation}
Now we take the cut-off function $\psi_\delta \in C_c^{\infty}(\Omega)$ such that $\psi_{\delta} = 1$ in $B_{\delta}(x_j)$ $\forall \ j=\{1, \dots , m\}$ and $\psi_\delta(x)=0$ in $B_{\delta}^{c}(x_j)$ with $|\psi_{\delta}| \leq 1.$ Then by taking $\phi= \psi_{\delta}$ in \eqref{wksol35} and since
\begin{equation*}
\begin{split}
0 &\leq \left| \int_{\Omega} (|\nabla u_k| \nabla u_k. \nabla u) \psi_{\delta} ~dx \right| \leq \int_{\Omega} |\nabla u_k|^{n-1}| \nabla u| |\psi_{\delta}|~dx \\
&\leq \int_{B_{2 \delta}} |\nabla u_k|^{n-1} |\nabla u| ~dx \leq \left(\int_{\Omega} |\nabla u_k|^n~dx\right)^{n/(n-1)}\left( \int_{\cup_j B_{2 \delta} (x_j)}|\nabla u|^n ~dx \right)^{1/n} \to 0 \ \text{as} \ \delta \to 0,
\end{split}
\end{equation*}
we deduce after letting $\delta\to 0$ that
\begin{equation}\label{wksol36}
\nu_2(A) \geq m_0 \nu_1(A) \geq m_0 \left(\frac{2n-\mu}{2n} \alpha \right)^{n-1}.
\end{equation}
On the other hand, by using the same argument as in Lemma \ref{wk-sol}\ (in particular see \ref{wk-sol2})
we can prove that for any compact set $K \subset \Omega_{\delta}= \Omega \backslash \cup_{i=1}^n B_{\delta}(x_i)$
\begin{equation*}
\lim_{k \to \infty} \int_{K} (|x|^{-\mu}* F(u_k)) f(u_k) u_k ~dx = \int_{K} (|x|^{-\mu}* F(u)) f(u)u ~dx.
\end{equation*}
Thus, we obtain
\begin{equation*}
\begin{split}
nc&=\lim_{k \to \infty} n \ \mathcal{J}_{\la, M}(u_k)- \frac{1}{2} \langle \mathcal{J}^{'}_{\la, M} (u_k), u_k \rangle =  \lim_{k \to \infty} \left( M(\|u_k\|^n)- \frac{1}{2} m(\|u_k\|^n) \|u_k\|^n \right)\\
&+ \lim_{k \to \infty} \frac{1}{2}\int_{\Omega} (|x|^{-\mu}*F(u_k))(f(u_k)u_k- n F(u_k))~dx+ \la \left(\frac{1}{2}- \frac{n}{q+1}\right) \int_{\Omega} h(x) |u_k|^{q+1}~dx.
\end{split}
\end{equation*}
Since $$ \int_{\Omega} (|x|^{-\mu}*F(u_k)) F(u_k)~dx \to \int_{\Omega} (|x|^{-\mu}*F(u))F(u)~dx,$$
 $$\frac{1}{2}\int_{\Omega} (|x|^{-\mu}*F(u_k))f(u_k) u_k ~dx \to \frac{1}{2}\int_{\Omega} (|x|^{-\mu}*F(u))f(u) u ~dx+ \frac{\nu_2(A)}{2},$$
together with \eqref{wksol36} it follows that
\begin{align*}
nc \geq \frac{m_0}{2} &\left(\frac{2n-\mu}{2n} \alpha_n\right)^{n-1} + \la \left(\frac{1}{2}- \frac{n}{q+1}\right) \int_{\Omega} h(x) u^{q+1}~dx -\frac{n}{2} \int_{\Omega} (|x|^{-\mu}*F(u)) F(u)~dx\\
&+ \frac{1}{2} \int_{\Omega} (|x|^{-\mu}*F(u)) f(u)u ~dx.
\end{align*}
Consequently,
\begin{align*}
c &\geq \frac{m_0}{2n} \left(\frac{2n-\mu}{2n}\alpha \right)^{n-1} + \la \left(\frac{1}{2n}-\frac{1}{(q+1)}\right) \int_{\Omega} h u^{q+1}~dx + \left(\frac{1}{2n}- \frac{1}{2(p+1)}\right) \int_{\Omega} (|x|^{-\mu}*F(u))f(u)u ~dx\\
&\geq \frac{m_0}{2n} \left(\frac{2n-\mu}{2n}\alpha\right)^{n-1} - \|h\|_{L^{r'}(\Omega)} \la \left(\frac{2n-1-q}{2n (q+1)}\right) \left(\int_{\Omega} u^{(p+2)\frac{2n}{2n-\mu}}~dx \right)^{\frac{(q+1)(2n-\mu)}{2n(p+2)}} \\
&+ \frac{2p+2-2n}{2n(2p+2)(p+2)} \left(\int_{\Omega} u^{(p+2)\frac{2n}{2n-\mu}}~dx \right)^{\frac{2n-\mu}{n}}
\geq \frac{m_0}{2n} \left(\frac{2n-\mu}{2n}\alpha\right)^{n-1} - \inf_{t \in \mathbb{R}^+} \rho(t)
\end{align*}
where $r'= \left(1- \frac{(q+1)(2n-\mu)}{2n(p+2)}\right)^{-1}$ and $\rho(t)= \|h\|_{L^{r'}(\Omega)} \la \left(\frac{2n-1-q}{2n (q+1)}\right)t^{\frac{q+1}{2(p+2)}} - \frac{2p+2-2n}{2n(2p+2)(p+2)} t .$
Thus $c \geq \frac{m_0}{2n} \left(\frac{2n-\mu}{2n}{\alpha_n} \right)^{n-1}- \tilde{C} \la^{\frac{2(p+2)}{2p+3-q}}$ which completes the proof.
\hfill{\QED}\\
\end{proof}
Now we prove Theorem \ref{second} which concerns the critical case $\beta= \frac{n}{n-1}$.\\ 
{\it Proof of Theorem \ref{second}}
Let $u_k$ be a nonnegative minimizing sequence for $\mathcal{J}_{\la, M}$ on $N_{\la, M} \backslash \{0\}$ satisfying \eqref{Ekeland} then $u_k$ is bounded in $W^{1,n}_0(\Om)$. Using Proposition \ref{j} we get $u_k$ is $PS_{\theta}$ sequence with $\theta < \frac{m_0}{2n} \left(\frac{2n-\mu}{2n}{\alpha_n}  \right)^{n-1}- \tilde{C} \la^{\frac{2(p+2)}{2p+3-q}}$. Taking $\la$ small enough, using Lemma~\ref{lemmaq} and Lemma~\ref{compactness}, $\{u_k\}$ admits a strongly convergent subsequence. Let $u \in W_0^{1,n}(\Omega)$ be the limit of this  subsequence. Then arguing as in the proof of Theorems~\ref{exis2} and \ref{exis3}, we prove that $u$ is a non-trivial weak solution and $\mathcal{J}_{\la,M}(u)= \theta.$ By elliptic regularity and strong maximum principle, we infer that $u > 0$ in $\Omega.$ This completes the proof of Theorem~\ref{second}.
\hfill{\QED}

\end{document}